\newcommand{\Z}{{\mathbb Z}}
\newtheorem{prop}{Proposition}[section]
\newtheorem{cor}[prop]{Corollary} 
\newtheorem{teo}[prop]{Theorem}
\newtheorem{thm}[prop]{Theorem}
\newtheorem{lemma}[prop]{Lemma}
\theoremstyle{definition}
\newtheorem{defi}[prop]{Definition}
\newtheorem{exmp}[prop]{Example}
\theoremstyle{remark}
\newtheorem{remark}[prop]{Remark}
\newcommand{\vai}{\rightarrow}
\newcommand{\ud}{{\underline{\Delta}}}
\newcommand{\ue}{{\underline{\epsilon}}}
\newcommand{\ve}{\varepsilon}
\newcommand{\te}{\tilde{\varepsilon}}
\newcommand{\um }{1_A}
\newcommand{\benu}{\begin{enumerate}}
\newcommand{\enu}{\end{enumerate}}
\newcommand{\beqna}{\begin{eqnarray}}
\newcommand{\eqna}{\end{eqnarray}}
\newcommand{\beqnast}{\begin{eqnarray*}}
\newcommand{\eqnast}{\end{eqnarray*}}
\newcommand{\beqn}{\begin{equation}}
\newcommand{\eqn}{\end{equation}}
\newcommand{\beqnst}{\begin{equation*}}
\newcommand{\eqnst}{\end{equation*}}
\newcommand{\bema}{\left ( \begin{array}}
\newcommand{\ema}{\end{array} \right )}
\newcommand{\End}{\operatorname{End}}
\newcommand{\ot}{\otimes}
\newcommand{\anti}{S^{-1}}
\newcommand{\q}{q}
  \def\ev{{\rm ev}} 
\def\Alg{{\sf Alg}}
\def\ParRep{{\sf ParRep}}
\def\ParAct{{\sf ParAct}}
\def\ul{\underline}
\newcommand{\thlabel}[1]{\label{th:#1}}
\newcommand{\thref}[1]{Theorem~\ref{th:#1}}
\newcommand{\prlabel}[1]{\label{pr:#1}}
\newcommand{\prref}[1]{Proposition~\ref{pr:#1}}
\newcommand{\relabel}[1]{\label{re:#1}}
\newcommand{\reref}[1]{Remark~\ref{re:#1}}
\newcommand{\exlabel}[1]{\label{ex:#1}}
\newcommand{\exref}[1]{Example~\ref{ex:#1}}
\newcommand{\delabel}[1]{\label{de:#1}}
\newcommand{\deref}[1]{Definition~\ref{de:#1}}
\newcommand{\eqlabel}[1]{\label{eq:#1}}
\newcommand{\equref}[1]{(\ref{eq:#1})}
\def\Cc{{\mathcal C}}
\newcommand{\repref}[1]{PR\ref{#1}}
\newcommand{\actref}[1]{(PA\ref{#1})}
\def\Mm{{\mathcal M}}
\def\bul{\bullet}
\def\acts{\blacktriangleright}
\begin{document}

\title[Partial Representations of Hopf Algebras]{Partial Representations of Hopf Algebras}

\author[M.M.S.\ Alves]{Marcelo \ Muniz \ S. \ Alves}
\address{Departamento de Matem\'atica, Universidade Federal do Paran\'a, Brazil}
\email{marcelo@mat.ufpr.br}
\author[E.\ Batista]{Eliezer Batista}
\address{Departamento de Matem\'atica, Universidade Federal de Santa Catarina, Brazil}
\email{ebatista@mtm.ufsc.br}
\author[J.\ Vercruysse]{Joost Vercruysse}
\address{D\'epartement de Math\'ematiques, Universit\'e Libre de Bruxelles, Belgium}
\email{jvercruy@ulb.ac.be}
\thanks{\\ {\bf 2010 Mathematics Subject Classification}: Primary 16T05; Secondary 16S40, 16S35, 16W50.\\   {\bf Key
words and phrases:} partial Hopf action, partial action, partial smash product, partial representation, Hopf algebroid,
monoidal category. } 

\begin{abstract}
In this work, the notion of partial representation of a Hopf algebra is introduced and its relationship with partial
actions of Hopf algebras is explored. Given a Hopf algebra $H$, one can associate it to a Hopf algebroid $H_{par}$ which
has the universal property  that each partial representation of $H$ can be factorized by an algebra morphism from
$H_{par}$. We define also the category of partial modules over a Hopf algebra $H$, which is the category of modules over
its associated Hopf algebroid $H_{par}$. The Hopf algebroid structure of $H_{par}$ enables us to enhance the category of
partial $H$ modules with a monoidal structure and such that the algebra objects in this category are the usual partial
actions.  Some examples of categories of partial $H$ modules are explored. In particular we can describe fully the
category of partially 
$\mathbb{Z}_2$-graded modules.
 \end{abstract} 

\maketitle

\tableofcontents

\section{Introduction}

The concept of a partial  group action, introduced in \cite{E1,E2}, resulted from the effort to endow important 
classes of $C^*$-algebras generated by partial isometries with a structure of a more general crossed product. The new
structure permitted to obtain relevant  results on $K$-theory, ideal structure and representations  of the algebras
under consideration, as well as to treat amenability questions, especially amenability of $C^*$-algebraic bundles (also
called Fell bundles),  using both partial actions and the related concept of partial representations of groups \cite{D}.
The algebraic study of partial actions and partial representations was initiated in \cite{DE} and \cite{DEP}, motivating
investigations in diverse directions.  In particular, the Galois theory of partial group actions developed in \cite{DFP}
inspired  further Galois theoretic results in \cite{CdG}, as well as the  introduction and study of partial Hopf actions
and coactions in \cite{CJ}. The latter paper became the starting point for further investigation of partial Hopf
(co)actions in \cite{AB}, \cite{AB2} and \cite{AB3}. One of the main results related to partial actions of Hopf algebras
is the globalization theorem, which states that every partial action of a Hopf algebra on a unital algebra can be viewed
as a restriction of a global action to a unital ideal.

The concept of a partial representation of a Hopf algebra first appeared in \cite{AB}, but there, only partial actions
on right ideals were considered, therefore, partial representations were presented in an asymmetric way. Basically,
a partial representation provides a way to still control the multiplicative behaviour of a linear map that fails to
be an algebra morphism. 
More specifically, a linear map $\pi$ from a Hopf algebra $H$ into an algebra $B$ is a partial representation if it
satisfies the axioms of the definition \ref{partialrep}, this set of conditions tells that it is only possible to join
the product in the presence of a ``witness''. For the case when the Hopf algebra is a group algebra, the conditions for
a partial representation coalesce into the conditions for a partial representation of a group \cite{DEP}. Partial
representations of groups were explored in references \cite{DEP} and \cite{DZ} and they are strictly related to
groupoids. In the case of partial representations of Hopf algebras, as we shall see in this article, one finds a richer
and more complex structure.

It is well known that the theory of partial actions of Hopf algebras has a lot of similarities with the theory of weak
Hopf algebras. The techniques used to develop partial Galois theory \cite{CdG} or partial entwining structures
\cite{CJ}, has a lot in common  with the techniques used for weak Hopf-Galois theory and weak entwining structures. This
was clarified in \cite{CJ}, showing that  both weak an partial entwining structures are instances of a more general,
so called ``lax'' structure. From a slightly different perspective similar connections were investigated in a more
categorical setting in \cite{Bohm:wtm}, where it was shown that both weak and partial entwining structures provide
examples of generalized liftings of monads in bicategories. 
Despite these similarities, the ``weak'' Hopf world has been explored a lot more than the
``partial'' Hopf world. 
So far, in the ``partial world'' only {\em partial actions} of a (usual) Hopf algebra $H$ on an algebra $A$ were defined
\cite{CJ}. Somewhat suggestively, such an algebra $A$ was then called a partial $H$-module algebra, although there did
not exists such a thing as a partial $H$-module. In fact, in classical Hopf algebra theory as well as in the weak case,
module algebras are exactly algebra objects in a monoidal category of modules. More precisely, we know that the category
of modules over a weak Hopf $k$-algebra is a monoidal category that allows a separable-Frobenius forgetful functor to
the category of $k$-modules. In fact, the category of these modules is equivalent to a category of modules over a Hopf
algebroid that is constructed out of the weak Hopf algebra. The base algebra of this Hopf algebroid is a
seperable-Frobenius algebra. 
Using partial representations, and the classical analogy between representations and modules, we
introduce in this paper the notion of a partial $H$-module $M$, as being a $k$-module allowing a partial representation
of $H$ on $\End(M)$. We can then show that the category of partial representations is isomorphic to the category of
modules over a certain algebra $H_{par}$ that is associated to $H$. Moreover, this algebra $H_{par}$ turns out to be a
Hopf algebroid, which shows once more the analogy between the weak and partial setting. The main difference between the
Hopf algebroids that appear in the partial setting compared to those associated to a weak Hopf algebra is that
the base algebra of the Hopf algebroid $H_{par}$ is not necessarily separable-Frobenius. 
As a consequence of the description via the Hopf algebroid, the category of partial modules is a monoidal
category. Algebra objects in this category turn out to be exactly (symmetric) partial $H$-module algebras.

Moreover, the Hopf algebroid $H_{par}$ has 
the following universal property: For each partial representation $\pi$, of $H$ on an algebra
$B$, there exists a unique algebra morphism $\hat{\pi}$, from $H_{par}$ to $B$ which factorizes this partial
representation. The association of $H_{par}$ to a Hopf algebra $H$ is in fact a functor from the category of Hopf
algebras to the category of Hopf algebroids. For the group case, this universal algebra was already know to be the
algebra of a groupoid \cite{DEP}. In case of a finite group $G$, the associated groupoid is again finite. This raises
the question whether the universal Hopf algebroid $H_{par}$ associated to a finite dimensional Hopf algebra is always
finite dimensional. We show in this paper that this is not the case, as the Hopf algebroid associated to Sweedler's
four dimensional Hopf algebra turns out to be infinite dimensional. 

We also explore a larger class of examples that go beyond the case of partial actions of groups, being the partially
graded algebras. Indeed, recall that given a group $G$, (classically) $G$-graded $k$-modules are exactly the comodules
over the group algebra $kG$, and $G$-graded algebras are the algebra objects in this monoidal category. For a finite
group $G$, the comodules over $kG$ are nothing else than the modules over the dual algebra $kG^*$. Hence we define
partially $G$-graded modules and and partially $G$-graded algebras as partial modules and partial actions of
$kG^*$. 
There is no general
method yet to explore all such partially graded objects, but we can show how to construct partially $G$ graded algebras
out of partial gradings on the base field. Also, given a normal subgroup $H\trianglelefteq G$, one can lift $G/H$
gradings to partial $G$ gradings in a canonical way. Finally, we describe completely the example of the category of
partially $\mathbb{Z}_2$ graded modules and give a characterization of partially $\mathbb{Z}_2$ graded algebras.

This paper is organized as follows. In section 2, the theory of partial representations of groups is reviewed. The
results related to partial representations of groups can be found in references \cite{DE} and \cite{DEP}, therefore, we
omit the proofs of the most part of the results in that section, except for the proof of the isomorphism between the
partial group algebra and a partial skew group ring, which contains many important ideas used for the case of partial
representations of Hopf algebras. Also in that section, we present the universal property of the partial group algebra
in a more categorical language. In section 3, we introduce partial representations of Hopf algebras and give the
paradigmatic examples of them, namely, the partial representation defined from a partial action and the partial
representation related to the partial smash product. In section 4, we introduce the universal
algebra which factorizes every partial action of a Hopf algebra $H$ by an algebra morphism, we call it the universal
partial ``Hopf'' algebra $H_{par}$ 
We show that the universal partial ``Hopf'' algebra $H_{par}$
has the structure of a Hopf algebroid and it is isomorphic to a partial smash product. Using these general results, we
can calculate thoroughly the example of the partial ``Hopf'' algebra for the Sweedler Hopf algebra $H_4$. In section 5,
we define the monoidal category of partial modules over a Hopf algebra $H$, as the category of modules over $H_{par}$.
and show that the algebra objects in this category correspond to partial actions. 
Some examples of categories of partial $H$ modules are explored
in detail. For the case of a group algebra $H=kG$, this category coincides with the category of partial actions of the
group $G$ on $k$-vector spaces. The partial modules over a universal  enveloping algebra of a Lie algebra $\mathfrak{g}$
coincide with the usual modules. Finally, in section 6, we explore some properties of partially graded modules and
partially graded algebras and provide some explicit examples.

Throughout the paper, all algebras are modules over a commutative ring $k$ and will be considered unital and
associative, unless mentioned otherwise.

\section{Partial representations of groups}

Partial representations of groups are closely related to partial actions of groups on algebras. In fact, to each partial action of a group on an algebra one can associate a partial representation of the same group and each partial representation of a group factors trough a universal algebra which is isomorphic to a partial skew group algebra related to a particular partial action of the same group. Moreover, there are deeper connections between partial representations of groups and inverse semigroups \cite{E2}.

\begin{defi} \cite{DE}
A {\em partial representation} of a group $G$ on an algebra $B$ is a map $\pi :G\rightarrow B$   such that 
\begin{enumerate}
\item $\pi (e) = 1_B$,
\item $\pi (g)\pi (h) \pi (h^{-1}) = \pi (gh)\pi (h^{-1}), \qquad \forall g,h \in
G$
\item $\pi (g^{-1}) \pi (g) \pi (h) = \pi (g^{-1}) \pi (gh),\qquad \forall g,h \in
G.$
\end{enumerate} 

Let $\pi:G\to B$ and $\pi':G\to B'$ be two partial representations of $G$. A {\em morphism of partial partial representations} is an
algebra morphisms $f:B\rightarrow B'$ such that $\pi ' =f\circ \pi$.

The category of partial representations of $G$, denoted as $\ParRep_G$ is the category whose objects are pairs $(B,\pi)$, where $B$ is a unital $k$-algebra and $\pi : G\rightarrow B$ is a partial representation of $G$ on $B$, and whose morphisms are morphisms of partial representations.
\end{defi}

\begin{remark}
Evidently, representations of $G$ are automatically partial representations. And if one put an extra condition
\[
\pi (g) \pi (g^{-1}) =1_B , \quad \forall g\in G ,
\]
then the partial representation $\pi$ turns out to be a usual representation of the group $G$ on the algebra $B$. Indeed
\[
\pi (g) \pi (h) = \pi (g) \pi (h) \pi (h^{-1}) \pi (h) =\pi (gh) \pi (h^{-1}) \pi (h) =\pi (gh) .
\]
\end{remark}

To see how partial representations of groups are closely related to the concept of partial actions of groups, let us briefly remember some facts about partial group actions.

\begin{defi} \cite{DE} 
 Let $G$ be a group and $A$ an algebra, a {\em partial action} $\alpha$ of $G$ on $A$ is given by a collection $\{D_g \}_{g\in G}$ of ideals of $A$ and a collection 
$ \{\alpha_g :D_{g^{-1}}\rightarrow D_g \}_{g\in G}$ of (not necessarily unital) algebra isomorphisms, satisfying the following conditions: 
\begin{enumerate}
\item $D_e =A$, and $\alpha_e =\mbox{Id}_A$.
\item $\alpha_g (D_{g^{-1}} \cap D_h )=D_g \cap D_{gh}$.
\item If $x\in D_{h^{-1}}\cap D_{(gh)^{-1}}$, then $\alpha_g \circ \alpha_h =\alpha_{gh}$.
\end{enumerate}
Consider two partial action $(A,\{D_g\}_{g\in G},\{\alpha_g\}_{g\in G})$ and $(B,\{E_g\}_{g\in G},\{\beta_g\}_{g\in G})$. A morphism of partial actions is an algebra morphism $\phi:A\to B$ such that $\phi(D_g)\subseteq E_g$ and $\phi\circ \alpha_g=\beta_g\circ\phi$ for all $g\in G$.

Partial actions and the morphisms between them form a category that we denote as $\ParAct_G$.
\end{defi}

\begin{remark}
Henceforth, for sake of simplicity, we will consider a special case of partial action in which every ideal $D_g$ is of the form $D_g =1_g A$, where $1_g$ is a central idempotent of $A$ for each $g\in G$. In this case, each $D_g$ is a unital algebra and whose unit is given by $1_g$, in particular $1_e =1_A$. Moreover, each $\alpha_g$ is assumed to be an isomorphism of unital algebras, i.e.\ $\alpha_g (1_{g^{-1}} )=1_g$. Similarly, if $\phi:(A,\{1_gA=D_g\}_{g\in G},\{\alpha_g\}_{g\in G})\to (B,\{1'_gB=E_g\}_{g\in G},\{\beta_g\}_{g\in G})$ is a morphism of partial actions, then $\phi$ is supposed to satisfy $\phi(1_g)=1'_g$. 
\end{remark}

A partial action of a group $G$ on a algebra $A$ enables us to construct a new algebra, called the partial skew group algebra, denoted by $A\rtimes_{\alpha} G$. Basically
\[
A\rtimes_{\alpha} G \cong \bigoplus_{g\in G} D_g \delta_g ,
\]
as a $k$-module and with the product defined as 
\[
(a_g \delta_g )(b_h \delta_h )=\alpha_g (\alpha_{g^{-1}}(a_g ) b_h )\delta_{gh} .
\]
The associativity of the partial skew group algebra in general is discussed in 
\cite{DE}. In the particular case of partial actions in which the domains $D_g$ are ideals of the form $1_g A$, then the skew group algebra is automatically associative. It is easy to verify that the map $\pi_0 :  G  \rightarrow  A\rtimes_{\alpha} G$, given by 
$\pi_0 (g)=1_g \delta_g$ defines a partial representation of $G$.

The partial crossed product has an important universal property. Let $A$ be an algebra on which the group $G$ acts partially, define the canonical inclusion
\[
\begin{array}{rccc} \phi_0 :& A & \rightarrow & A\rtimes_{\alpha} G\\
\, & a & \mapsto & a\delta_e \end{array}
\]
which is easily seen to be an algebra monomorphism. Given a unital algebra $B$, a pair of maps $(\phi , \pi)$ is said to be a covariant pair if $\phi :A \rightarrow B$ is an algebra morphism and $\pi :G \rightarrow B$ is a partial representation such that
\[
\phi (\alpha_g (a1_{g^{-1}}))=\pi (g)\phi (a) \pi (g^{-1}) .
\]
The universal property of $A\rtimes_{\alpha} G$ is given by the following result.

\begin{thm}\thlabel{universal} \cite{DEP} Let $A$ be an algebra on which the group $G$ acts partially, $B$ a unital algebra and $(\phi ,\pi )$ a covariant pair related to these data. Then, there exists a unique algebra morphism $\Phi : A\rtimes_{\alpha} G\rightarrow B$ such that $\phi =\Phi \circ \phi_0$ and $\pi =\Phi \circ \pi_0$.
\end{thm}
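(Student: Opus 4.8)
The plan is to extract the formula for $\Phi$ from the uniqueness requirement, and then verify that this formula actually defines an algebra morphism. First I would observe that each generator factors through the canonical maps: since $a_g\in D_g=1_gA$ satisfies $a_g1_g=a_g$, the product in $A\rtimes_{\alpha}G$ gives $\phi_0(a_g)\pi_0(g)=(a_g\delta_e)(1_g\delta_g)=\alpha_e(a_g 1_g)\delta_g=a_g\delta_g$. Hence if $\Phi$ exists with $\Phi\circ\phi_0=\phi$ and $\Phi\circ\pi_0=\pi$, multiplicativity forces $\Phi(a_g\delta_g)=\Phi(\phi_0(a_g))\Phi(\pi_0(g))=\phi(a_g)\pi(g)$, which both proves uniqueness and tells me exactly how to define $\Phi$. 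Since $A\rtimes_{\alpha}G=\bigoplus_{g\in G}D_g\delta_g$ as a $k$-module and $a_g\mapsto\phi(a_g)\pi(g)$ is $k$-linear on each summand $D_g\delta_g$, this prescription extends uniquely to a well-defined $k$-linear map $\Phi$.

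Before checking that $\Phi$ is multiplicative I would record two consequences of the covariance relation $\phi(\alpha_g(a1_{g^{-1}}))=\pi(g)\phi(a)\pi(g^{-1})$. Writing $e_g:=\pi(g)\pi(g^{-1})$ and evaluating covariance at $a=1_A$ (so $\alpha_g(1_{g^{-1}})=1_g$) gives $\phi(1_g)=\pi(g)\pi(g^{-1})=e_g$; consequently, for $a_g\in D_g$ one has $a_g=1_ga_g=a_g1_g$ and therefore $e_g\phi(a_g)=\phi(a_g)=\phi(a_g)e_g$. Applying covariance for $g^{-1}$ at $a=a_g$ (using $a_g1_g=a_g$) yields $\phi(\alpha_{g^{-1}}(a_g))=\pi(g^{-1})\phi(a_g)\pi(g)$, and multiplying on the left by $\pi(g)$ together with the absorption $e_g\phi(a_g)=\phi(a_g)$ gives the intertwining identity $\pi(g)\phi(\alpha_{g^{-1}}(a_g))=\phi(a_g)\pi(g)$. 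The relation $\pi(g)\pi(g^{-1})\pi(g)=\pi(g)$, which follows from axiom (2) with $h=g^{-1}$, will also be needed.

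The crux of the argument is multiplicativity on generators, i.e.\ $\phi(\alpha_g(\alpha_{g^{-1}}(a_g)b_h))\pi(gh)=\phi(a_g)\pi(g)\phi(b_h)\pi(h)$. Setting $c:=\alpha_{g^{-1}}(a_g)b_h$, which lies in the ideal $D_{g^{-1}}$, I would run the chain $\phi(\alpha_g(c))\pi(gh)=\pi(g)\phi(c)\pi(g^{-1})\pi(gh)=\pi(g)\phi(c)\pi(g^{-1})\pi(g)\pi(h)=\pi(g)\phi(c)\pi(h)$, where the first equality is covariance (legitimate because $c1_{g^{-1}}=c$), the second is axiom (3) in the form $\pi(g^{-1})\pi(gh)=\pi(g^{-1})\pi(g)\pi(h)$, and the third uses $\phi(c)\pi(g^{-1})\pi(g)=\phi(c)e_{g^{-1}}=\phi(c)$; expanding $\phi(c)=\phi(\alpha_{g^{-1}}(a_g))\phi(b_h)$ and invoking the intertwining identity then delivers $\phi(a_g)\pi(g)\phi(b_h)\pi(h)$. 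The main obstacle is precisely this bookkeeping: covariance repeatedly inserts the ``witness'' idempotents $e_g$ and $e_{g^{-1}}$, and the whole point is that the partial-representation axioms (together with $\phi(1_g)=e_g$) make them cancel so that the twisted product matches the product in $B$. It then remains to check unitality, $\Phi(1_A\delta_e)=\phi(1_A)\pi(e)=1_B$, and the two compatibilities $\Phi(\phi_0(a))=\phi(a)\pi(e)=\phi(a)$ and $\Phi(\pi_0(g))=\phi(1_g)\pi(g)=e_g\pi(g)=\pi(g)$, all of which are immediate from $\pi(e)=1_B$ and $\pi(g)\pi(g^{-1})\pi(g)=\pi(g)$.
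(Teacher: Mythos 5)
Your proof is correct. The paper itself omits the argument for this group-case statement (it is cited from [DEP]), but it proves the Hopf-algebra analogue for $\underline{A\# H}$ by exactly the strategy you use: define $\Phi$ on generators by the forced formula $\Phi(a_g\delta_g)=\phi(a_g)\pi(g)$ and verify multiplicativity by combining the covariance relation with the partial-representation axioms to cancel the witness idempotents $\pi(g)\pi(g^{-1})=\phi(1_g)$; your write-up is the faithful group-case specialization of that proof.
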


There is a second partial representation we can associate to a partial action. Indeed, given a partial action $\alpha$ of $G$ on $A$, one can define a partial representation
\[
\begin{array}{rccc}
\pi : & G & \rightarrow & \mbox{End}_k (A) \\
\,    & g & \mapsto     & \pi (g)
\end{array}
\]
where $\pi (g)(a)=\alpha_g (a1_{g^{-1}})$. Moreover, we have the following.

\begin{prop}
Let $\alpha$ be a partial action of $G$ on $A$. Let $\pi:G\to \End_k(A)$ be the associated partial representation as above. Then $(\phi,\pi)$ is a covariant pair, where
\[
\phi:A\to \End_k(A),\ \phi(a)(a')=aa'.
\]
Consequently, there is an algebra morphism 
\[
\Phi_0 : A\rtimes_{\alpha} G\rightarrow \End_k (A).
\]
\end{prop}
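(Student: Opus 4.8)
The plan is to unfold the definition of a covariant pair and check its structural requirements together with the covariance identity; the existence of $\Phi_0$ is then an immediate application of \thref{universal}. The map $\phi$ is nothing but the left regular representation of $A$, so it is an algebra morphism with $\phi(1_A)=\mbox{Id}_A$, and $\pi$ has already been recorded as a partial representation. Hence the entire content of the statement reduces to the covariance relation $\phi(\alpha_g(a1_{g^{-1}}))=\pi(g)\phi(a)\pi(g^{-1})$, which I would verify by evaluating both endomorphisms on an arbitrary element $a'\in A$.

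For the left-hand side, directly from the definitions,
\[
\phi(\alpha_g(a1_{g^{-1}}))(a')=\alpha_g(a1_{g^{-1}})\,a'.
\]
For the right-hand side I would peel off the three maps from the inside out: first $\pi(g^{-1})(a')=\alpha_{g^{-1}}(a'1_g)$, then $\phi(a)$ multiplies on the left to give $a\,\alpha_{g^{-1}}(a'1_g)$, and finally
\[
\pi(g)\big(a\,\alpha_{g^{-1}}(a'1_g)\big)=\alpha_g\big(a\,\alpha_{g^{-1}}(a'1_g)\,1_{g^{-1}}\big).
\]
The goal is to show these two expressions coincide.

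The key step is a chain of manipulations inside $\alpha_g$. Since $1_{g^{-1}}$ is a central idempotent and $\alpha_{g^{-1}}(a'1_g)\in D_{g^{-1}}=1_{g^{-1}}A$, I can rewrite the argument as $(a1_{g^{-1}})\,\alpha_{g^{-1}}(a'1_g)$, a product of two elements of $D_{g^{-1}}$. Because $\alpha_g\colon D_{g^{-1}}\to D_g$ is an algebra isomorphism, it is multiplicative on this product, giving $\alpha_g(a1_{g^{-1}})\,\alpha_g(\alpha_{g^{-1}}(a'1_g))$; invoking $\alpha_g\circ\alpha_{g^{-1}}=\mbox{Id}$ on $D_g$ (a special case of axiom (3) with $h=g^{-1}$), the second factor collapses to $a'1_g$. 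Finally, $\alpha_g(a1_{g^{-1}})\in D_g=1_gA$ is absorbed by $1_g$, and centrality of $1_g$ lets me slide it past $a'$, yielding exactly $\alpha_g(a1_{g^{-1}})\,a'$. This matches the left-hand side and establishes covariance.

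The only place requiring genuine care — and the main obstacle — is the domain bookkeeping: one must ensure that each product to which $\alpha_g$ is applied truly lies in $D_{g^{-1}}$ before invoking multiplicativity, and that idempotents are inserted or deleted only where the relevant unit or centrality property is actually available. Once the covariance identity is in hand, \thref{universal} applied to the covariant pair $(\phi,\pi)$ produces the unique algebra morphism $\Phi_0\colon A\rtimes_\alpha G\to\End_k(A)$ satisfying $\Phi_0\circ\phi_0=\phi$ and $\Phi_0\circ\pi_0=\pi$, which is precisely the asserted conclusion.
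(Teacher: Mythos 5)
Your proof is correct and follows essentially the same route as the paper: evaluate both sides of the covariance identity on an arbitrary $a'$, use multiplicativity of $\alpha_g$ together with $\alpha_g\circ\alpha_{g^{-1}}=\mathrm{Id}$ on $D_g$ and centrality of the idempotents, and then invoke the universal property of the partial crossed product to obtain $\Phi_0$. The only difference is that you make the domain bookkeeping explicit, which the paper leaves implicit.
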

\begin{proof} Let us verify that $(\phi,\pi)$ is covariant. Take $a,a'\in A$, then we have
\begin{eqnarray*}
\phi(\alpha_g(a1_{g^{-1}}))(a')&=&\alpha(a1_{g^{-1}})a'\\
=\pi(g)\phi(a)\pi(g^-1)(a')&=&\alpha_g(a\alpha_{g^{-1}}(a'1_g)1_{g^{-1}})\\
&=&\alpha_g(a1_{g^{-1}})\alpha_g\circ\alpha_{g^-1}(a'1_g)\\
&=&\alpha_g(a1_g^{-1})a'
\end{eqnarray*}
By \thref{universal}, we then know that the morphism $\Phi_0$ exists, it has the following explicit form
\[
\Phi_0(a_g\delta_g)(a')=a_g\alpha_g(a'1_{g^{-1}}).
\]
\end{proof}

Furthermore, the above constructions can be easily verified to be functorial. Hence, the relations between partial
actions and partial representations can be summarized in the following theorem. 

\begin{thm}
Let $G$ be a group, then there exist functors 
\begin{eqnarray*}
&\xymatrix{\ParAct_G \ar@<.5ex>[rr]^-{\Pi_0} \ar@<-.5ex>[rr]_-{\Pi} && \ParRep_G }\\ 
&\qquad \Pi_0(A,\alpha)=(A\rtimes_{\alpha} G,\pi_0), \quad \Pi(A,\alpha)=(\End_k(A),\pi)
\end{eqnarray*}
and a natural transformation $\ul\Phi:\Pi_0\to\Pi$
\end{thm}

\begin{remark}\relabel{freefunctor}
A natural question that arises from the previous theorem, is whether any of the functors $\Pi$ or $\Pi_0$ has an adjoint. We will solve this question in the more general setting of partial actions and representations of Hopf algebras, where we will observe that $\Pi$ is nothing else then a forgetful functor, hence allowing a left adjoint, associating a ``free partial action'' to a partial representation.
\end{remark}

It is well known in classical representation theory of groups that any representation $\rho$ of a group $G$ on a $k$ module $M$ corresponds to an algebra morphism between the group algebra $kG$ and the algebra $\mbox{End}_k (M)$. For the case of partial representations of a group $G$, one can define a new object associated to $G$ in order to obtain an analogous result. The ``partial group algebra'' $k_{par}G$ is the unital algebra generated by symbols $[g]$ for each $g\in G$ satisfying the relations
\[
[e]=1 \; , \quad [g][h][h^{-1}]=[gh][h^{-1}]\; , \quad [g^{-1}][g][h]=[g^{-1}][gh]\qquad \forall g,h\in G .
\]
It is easy to see that the map 
\[
\begin{array}{rccc}[ \underline{\, }] : & G & \rightarrow & k_{par}G \\
\, & g & \mapsto & [g] \end{array}
\]
is a partial representation of $G$ on the algebra $k_{par}G$. The next proposition characterizes $k_{par}G$ by a universal property. Before we formulate this proposition, recall that given a category $\Cc$ with fixed object $A$, the co-slice category $A/\Cc$ is the category whose objects are pairs $(B,f)$, where $B$ is an object of $\Cc$ and $f:A\to B$ is a morphism of $\Cc$. A morphism $(B,f)\to (B',f')$ in $A/\Cc$ is a map $g:B\to B'$ satisfying $g\circ f=f'$.

\begin{prop} \cite{DEP} Each partial representation $\pi :G \rightarrow B$ is associated to a unique algebra morphism $\hat{\pi} :k_{par}G\rightarrow B$, such that $\pi =\hat{\pi}\circ [\underline{\, }]$. Conversely, given any algebra morphism 
$\phi :k_{par}G\rightarrow B$, one can define a partial representation $\pi_{\phi} :G\rightarrow B$ such that $\phi =\hat{\pi_{\phi}}$. 

In other words, the following functors establish an isomorphism between the category of partial representations and the co-slice category $k_{par}G/ \Alg_k$ 
\[
\xymatrix{
\ParRep_G \ar@<.5ex>[rr]^-{L} && k_{par}G / \Alg_k \ar@<.5ex>[ll]^-{R}
}
\]
where $L((B, \pi ))= (B,\hat{\pi})$, and $R((B, \phi )) =(B,\pi_{\phi})$.
\end{prop}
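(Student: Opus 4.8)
The plan is to exploit the fact that $k_{par}G$ is, by construction, the unital $k$-algebra presented by generators and relations whose relations are \emph{verbatim} the axioms of a partial representation. Concretely, I would write $k_{par}G = k\langle x_g : g\in G\rangle / I$, where $k\langle x_g : g\in G\rangle$ is the free unital $k$-algebra on the set $G$ and $I$ is the two-sided ideal generated by the elements $x_e - 1$, $\ x_g x_h x_{h^{-1}} - x_{gh}x_{h^{-1}}$ and $\ x_{g^{-1}}x_g x_h - x_{g^{-1}}x_{gh}$ for all $g,h\in G$, with $[g]$ denoting the class of $x_g$. All the work then reduces to applying the universal property of the free algebra and of the quotient.

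For the first assertion, given a partial representation $\pi:G\to B$, I would use the universal property of the free algebra to get the unique algebra morphism $k\langle x_g\rangle\to B$ with $x_g\mapsto \pi(g)$. The three families of generators of $I$ map to $0$ precisely because $\pi$ satisfies the three axioms of a partial representation of a group; hence the morphism factors through the quotient, producing $\hat\pi:k_{par}G\to B$ with $\hat\pi([g])=\pi(g)$, that is $\pi=\hat\pi\circ[\underline{\,}]$. Uniqueness is immediate: the elements $[g]$ generate $k_{par}G$ as an algebra, and the requirement $\pi=\hat\pi\circ[\underline{\,}]$ forces $\hat\pi([g])=\pi(g)$ on these generators, determining $\hat\pi$ completely.

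For the converse, given an algebra morphism $\phi:k_{par}G\to B$, I would set $\pi_\phi:=\phi\circ[\underline{\,}]$. Since $[\underline{\,}]$ is itself a partial representation and algebra morphisms preserve units and products, $\pi_\phi$ is again a partial representation; moreover $\widehat{\pi_\phi}$ and $\phi$ are two algebra morphisms agreeing on each generator $[g]$ (as $\widehat{\pi_\phi}([g])=\pi_\phi(g)=\phi([g])$), so they coincide and $\phi=\widehat{\pi_\phi}$. Together with the previous paragraph this yields, for every fixed $B$, a bijection between partial representations $G\to B$ and algebra maps $k_{par}G\to B$.

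Finally, to upgrade this pointwise bijection to the stated isomorphism of categories, I would check that $L$ and $R$ act correctly on morphisms and are mutually inverse. A morphism $(B,\pi)\to(B',\pi')$ in $\ParRep_G$ is an algebra map $f:B\to B'$ with $f\circ\pi=\pi'$, while a morphism $(B,\hat\pi)\to(B',\widehat{\pi'})$ in $k_{par}G/\Alg_k$ is an algebra map $f$ with $f\circ\hat\pi=\widehat{\pi'}$; both $L$ and $R$ leave this underlying $f$ unchanged. The only genuine point — and the place I would expect to spend care rather than difficulty — is the equivalence of these two conditions on $f$: one direction follows by composing $f\circ\hat\pi=\widehat{\pi'}$ with $[\underline{\,}]$ to get $f\circ\pi=\pi'$, and the reverse follows by evaluating $f\circ\hat\pi$ and $\widehat{\pi'}$ on each generator $[g]$, where the identity reduces to $f(\pi(g))=\pi'(g)$ and the two morphisms are then equal because the $[g]$ generate $k_{par}G$. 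Since the object correspondence is a bijection and the morphism conditions match, $L$ and $R$ are mutually inverse functors, establishing the isomorphism.
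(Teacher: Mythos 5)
Your proof is correct and follows essentially the same route as the paper's: the paper defines $\hat{\pi}$ directly on words $[g_1]\cdots[g_n]$ and asserts well-definedness, which is exactly the factorization through the free algebra and its quotient that you spell out explicitly. Your version is more complete in that it actually verifies the converse direction and the action of $L$ and $R$ on morphisms (which the paper leaves as "easy to see"), but there is no difference in method.
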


\begin{proof} Define, for any word $w=[g_1][g_2]\ldots [g_n]\in k_{par}G$ the element 
$\hat{\pi}(w)=\pi (g_1 )\pi (g_2)\ldots \pi (g_n) \in B$ it is easy to see that the map $\hat{\pi}$ defined this way is the unique algebra homomorphism between $k_{par}G$ and $B$ which factorizes the partial representation $\pi$.

On the other hand, given the algebra morphism $\phi :k_{par}G\rightarrow B$, define $\pi_{\phi} :G\rightarrow B$ by $\pi_{\phi} (g)=\phi ([g])$. It is easy to see that this defines a partial representation of $G$ which is factorized by $\phi$. 
\end{proof}

A very important result in the theory of partial representations of groups is that the partial group algebra is always isomorphic to a partial crossed product. First, it is important to note that the partial group algebra $k_{par}G$ has a natural $G$ grading. Indeed we can decompose, as a vector space, the whole partial group algebra as
\[
k_{par}G \cong \bigoplus_{g\in G} A_g ,
\]
where each subspace $A_g$ is generated by elements of the form $[h_1 ][h_2 ]\ldots [h_n ]$ such that $g=h_1 h_2 \ldots h_n$, and it is easy to see that the product in $k_{par}G$ makes $A_g A_h \subseteq A_{gh}$. Now, for each $g\in G$ define the element $\varepsilon_g =[g][g^{-1}]\in k_{par}G$. One can prove easily that these $\varepsilon_g$ are idempotent for each $g\in G$. These elements satisfy the following commutation relation:
\[
[g]\varepsilon_h =\varepsilon_{gh}[g] .
\]
Indeed,
\begin{eqnarray}
[g]\varepsilon_h &=& [g][h][h^{-1}] =[gh][h^{-1}]\nonumber\\
&=& [gh][(gh)^{-1}][gh][h^{-1}]=[gh][(gh)^{-1}][g]\nonumber\\
&=& \varepsilon_{gh}[g] .\nonumber
\end{eqnarray}
From this, one can prove that all $\varepsilon_g$ commute among themselves. Define the subalgebra $A=\langle \varepsilon_g |g\in G \rangle \subseteq k_{par}G$. This is a commutative algebra generated by central idempotents, and it is not difficult to prove that the sub algebra $A$ corresponds to the uniform sub algebra $A_e$ coming from the natural $G$ grading above presented. Then, we have the following result.

\begin{thm} \cite{DE} Given a group $G$, there is a partial action of $G$ on the commutative algebra $A\subseteq k_{par}G$ above defined, such that $k_{par}G \cong A\rtimes_{\alpha}G$.
\end{thm}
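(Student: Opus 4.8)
The plan is to exhibit $A$ as the base algebra of an explicit partial action, and then to identify $k_{par}G$ with the resulting skew group algebra by exploiting its $G$-grading. First I would define the partial action. For each $g\in G$ put $1_g:=\varepsilon_g$ and $D_g:=\varepsilon_g A$; since the $\varepsilon_g$ are commuting central idempotents of $A$, each $D_g$ is a unital ideal of $A$ with unit $\varepsilon_g$, and $D_e=A$ because $\varepsilon_e=[e][e]=1$. Then define $\alpha_g\colon D_{g^{-1}}\to D_g$ by conjugation, $\alpha_g(x)=[g]\,x\,[g^{-1}]$. The whole verification rests on three elementary consequences of the relations of $k_{par}G$: the commutation rule $[g]\varepsilon_h=\varepsilon_{gh}[g]$ recorded above, the identity $[g][g^{-1}][g]=[g]$ (so that $[g]\varepsilon_{g^{-1}}=[g]$ and $\varepsilon_g[g]=[g]$), and the two factorizations $[g][h]=\varepsilon_g[gh]=[gh]\varepsilon_{h^{-1}}$, obtained by inserting $[h^{-1}][h]$, respectively $[g^{-1}][g]$.

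Using these I would check that $\alpha$ is a partial action. Conjugation sends a monomial $\varepsilon_{h_1}\cdots\varepsilon_{h_n}$ to $\varepsilon_g\varepsilon_{gh_1}\cdots\varepsilon_{gh_n}\in D_g$, so $\alpha_g$ maps $A$ into $A$ and $D_{g^{-1}}$ into $D_g$, with $\alpha_g(\varepsilon_{g^{-1}})=\varepsilon_g$. For $x,y\in D_{g^{-1}}$ one has $[g^{-1}][g]=\varepsilon_{g^{-1}}$, the unit of $D_{g^{-1}}$, whence $\alpha_g(x)\alpha_g(y)=[g]\,x\varepsilon_{g^{-1}}y\,[g^{-1}]=\alpha_g(xy)$, so $\alpha_g$ is an algebra map, and $\alpha_{g^{-1}}\alpha_g=\mathrm{Id}$ on $D_{g^{-1}}$ makes it an isomorphism with inverse $\alpha_{g^{-1}}$. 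Axiom (1) is immediate; axiom (2) follows from $\alpha_g(\varepsilon_{g^{-1}}\varepsilon_h)=\varepsilon_g\varepsilon_{gh}$ together with $D_{g^{-1}}\cap D_h=\varepsilon_{g^{-1}}\varepsilon_h A$; and axiom (3) reduces, after substituting $[g][h]=[gh]\varepsilon_{h^{-1}}$ and $[h^{-1}][g^{-1}]=[h^{-1}g^{-1}]\varepsilon_g$ into $\alpha_g\alpha_h(x)=[g][h]\,x\,[h^{-1}][g^{-1}]$, to absorbing the surviving idempotent factors on $D_{h^{-1}}$ (using $x\varepsilon_{h^{-1}}=x$ and $[h^{-1}g^{-1}]\varepsilon_g=\varepsilon_{h^{-1}}[h^{-1}g^{-1}]$).

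Next I would build the isomorphism from the universal property \thref{universal}. The inclusion $\iota\colon A\hookrightarrow k_{par}G$ and the canonical partial representation $g\mapsto[g]$ form a covariant pair, since for $a\in A$ we have $\iota(\alpha_g(a\varepsilon_{g^{-1}}))=[g]\,a\varepsilon_{g^{-1}}\,[g^{-1}]=[g]\,a\,[g^{-1}]$ by $\varepsilon_{g^{-1}}[g^{-1}]=[g^{-1}]$. Hence there is a unique algebra morphism $\Phi\colon A\rtimes_\alpha G\to k_{par}G$ with $\Phi\circ\phi_0=\iota$ and $\Phi\circ\pi_0=(g\mapsto[g])$; because $a\delta_g=(a\delta_e)(\varepsilon_g\delta_g)$ for $a\in D_g$, it is given explicitly by $\Phi(a\delta_g)=a[g]$. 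Finally I would show $\Phi$ is bijective through the grading. Both sides are $G$-graded, with $A\rtimes_\alpha G=\bigoplus_g D_g\delta_g$ by construction and $k_{par}G=\bigoplus_g A_g$ as recalled, and $\Phi$ is homogeneous since $A=A_e$ and $A_eA_g\subseteq A_g$ give $\Phi(D_g\delta_g)\subseteq A_g$. On each component $\Phi_g\colon D_g\delta_g\to A_g$: injectivity holds because $a[g]=0$ forces $a=a\varepsilon_g=a[g][g^{-1}]=0$; for surjectivity, given homogeneous $w\in A_g$ one checks by induction on word length, using $[g]\varepsilon_h=\varepsilon_{gh}[g]$, that $\varepsilon_g w=w=w\varepsilon_{g^{-1}}$, so $a:=w[g^{-1}]$ lies in $A=A_e$ and in $\varepsilon_gA=D_g$, with $\Phi(a\delta_g)=w[g^{-1}][g]=w\varepsilon_{g^{-1}}=w$. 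Summing over $g$ yields the desired isomorphism $k_{par}G\cong A\rtimes_\alpha G$.

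I expect the main obstacle to be precisely the bijectivity of $\Phi$: everything there hinges on the $G$-grading of $k_{par}G$ and on the homogeneity identities $\varepsilon_g w=w=w\varepsilon_{g^{-1}}$ for $w\in A_g$, which are the exact mechanism by which the single algebra $k_{par}G$ disassembles into the components $D_g\delta_g$ of a partial crossed product. The verification of axiom (3) of the partial action is the other delicate point, being the one place where the two factorizations of $[g][h]$ must be used in tandem; the remaining checks are routine manipulations with the commuting idempotents $\varepsilon_g$.
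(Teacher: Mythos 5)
Your proposal is correct, and the first half (the definition of the partial action by conjugation, its verification, and the construction of $\Phi:A\rtimes_\alpha G\to k_{par}G$ from the covariant pair $(\iota,\,g\mapsto[g])$ via the universal property of the crossed product) coincides with the paper's argument. Where you genuinely diverge is in proving that $\Phi$ is an isomorphism. The paper plays the two universal properties against each other: it observes that $g\mapsto\varepsilon_g\delta_g$ is a partial representation of $G$ on $A\rtimes_\alpha G$, obtains from the universal property of $k_{par}G$ an explicit algebra map $\hat\pi([g_1]\cdots[g_n])=\varepsilon_{g_1}\varepsilon_{g_1g_2}\cdots\varepsilon_{g_1\cdots g_n}\delta_{g_1\cdots g_n}$, and checks on generators that $\hat\pi$ and $\Phi$ are mutually inverse. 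You instead construct only $\Phi$ and prove bijectivity directly, degree by degree, using the $G$-grading $k_{par}G=\bigoplus_g A_g$ and the homogeneity identities $\varepsilon_g w=w=w\varepsilon_{g^{-1}}$ for $w\in A_g$. Both routes are sound. The paper's version is more symmetric, produces the inverse map explicitly, and does not actually use the grading in the proof proper; yours economizes on universal properties but leans on two facts the paper only asserts in the surrounding discussion, namely the grading itself and the identification $A=A_e$ (needed so that $w[g^{-1}]$ lands in $A$), so in a fully self-contained write-up you would owe a proof of $A_e\subseteq A$. Your closing identities $\varepsilon_gw=w=w\varepsilon_{g^{-1}}$ and the injectivity step $a=a\varepsilon_g=a[g][g^{-1}]$ are exactly the computations that make the degreewise argument work, and they check out against the relations of $k_{par}G$.
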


\begin{proof} In order to define a partial action of $G$ on $A$, we have to give the domains $D_g$ and the isomorphisms $\alpha_g :D_{g^{-1}}\rightarrow D_g$ for each 
$g\in G$. As the elements $\varepsilon_g$ are central idempotents in $A$, define the ideals $D_g =\varepsilon_g A$. Clearly, these ideals are unital algebras with unit $\varepsilon_g$. Now, the partially defined isomorphisms between these ideals are
\[
\alpha_g (\varepsilon_{g^{-1}} \varepsilon_{h_1}\ldots \varepsilon_{h_n}) 
=[g]\varepsilon_{g^{-1}} \varepsilon_{h_1}\ldots \varepsilon_{h_n} [g^{-1}] =
\varepsilon_{g} \varepsilon_{g h_1}\ldots \varepsilon_{g h_n}
\]
It is easy to verify that these data indeed define a partial action of $G$ on $A$.

In order to prove the isomorphism, let us use both universal properties, of the partial crossed product and of the partial group algebra. First, the map $\pi_0 :G\rightarrow A\rtimes_{\alpha}G$ given by $\pi_0 (g) =\varepsilon_g \delta_g$ is a partial representation of the group $G$ on the partial crossed product. Then, there is a unique algebra morphism $\hat{\pi}: k_{par}G\rightarrow A\rtimes_{\alpha}G$, which factorizes this partial representation. This morphism can be written explicitly as
\[
\hat{\pi} ([g_1]\ldots [g_n ])=\varepsilon_{g_1}\varepsilon_{g_1 g_2}\ldots 
\varepsilon_{g_1 \ldots g_n} \delta_{g_1 \ldots g_n } .
\] 

On the other hand, the canonical inclusion of $A$ into $k_{par}G$ and the canonical partial representation $g\mapsto [g]$ form a covariant pair relative to the algebra $k_{par} G$ then there is a unique algebra morphism $\Phi :A\rtimes_{\alpha}G\rightarrow k_{par}G$, explicitly given by 
\[
\Phi (\varepsilon_g \varepsilon_{h_1} \ldots \varepsilon_{h_n} \delta_g) =\varepsilon_g \varepsilon_{h_1} \ldots \varepsilon_{h_n} [g].
\]
Easily, one can verify that the morphisms $\hat{\pi}$ and $\Phi$ are mutually inverses, completing the proof.
\end{proof}

\section{Partial representations of Hopf algebras}

A first definition of partial representations of Hopf algebras was proposed in \cite{AB}, requiring only axioms
(\repref{partialrep1}) and (\repref{partialrep2}) below. This was mainly motivated by the constructions done in
\cite{CJ} for partial actions of a Hopf algebra $H$ on a unital algebra $A$, originated from partial entwining
structures. Nevertheless, many of the important results on partial actions, such as globalization theorems and Morita
equivalence between partial smash products demanded a more restrictive definition of partial action, which always fit in
the case of partial group actions. In fact, in case of a cocommutative Hopf algebra (such as the group algebra), this
definition becomes perfectly left-right symmetric considering only axioms (\repref{partialrep1}, \repref{partialrep2},
\repref{partialrep5}).

\begin{defi} \label{partialrep}
Let $H$ be a Hopf $k$-algebra, and let $B$ be a unital $k$-algebra. A \emph{partial representation} of $H$ on $B$ is a linear map $\pi: H \rightarrow B$ such that 
\begin{enumerate}[{(PR1)}]
\item $\pi (1_H)  =  1_B$ \label{partialrep1}
\item $\pi (h) \pi (k_{(1)}) \pi (S(k_{(2)}))  =   \pi (hk_{(1)}) \pi (S(k_{(2)})) $
\label{partialrep2}
\item $\pi (h_{(1)}) \pi (S(h_{(2)})) \pi (k)  =   \pi (h_{(1)}) \pi (S(h_{(2)})k)$ \label{partialrep3} 
\item $\pi (h) \pi (S(k_{(1)})) \pi (k_{(2)}) = \pi (hS(k_{(1)})) \pi (k_{(2)})$ \label{partialrep4}
\item $\pi (S(h_{(1)}))\pi (h_{(2)}) \pi (k) = \pi (S(h_{(1)}))\pi (h_{(2)} k)$ \label{partialrep5}
\end{enumerate}

If $(B,\pi)$ and $(B',\pi')$ are two partial representations of $H$, then we say that an algebra morphism $f:B\rightarrow B'$ is a morphism of partial representations if $\pi ' =f\circ \pi$. 

The category whose objects are partial representations of $H$ and whose morphisms are morphisms of partial representations is denoted by $\ParRep_H$.
\end{defi}

\begin{remark} If the Hopf algebra $H$ has invertible antipode then the items 
(\repref{partialrep3}) and (\repref{partialrep4}) can be rewritten respectively as
\begin{enumerate}
\item[(PR3)] $\pi (\anti (h_{(2)})) \pi (h_{(1)}) \pi (k)  =   \pi (\anti (h_{(2)})) \pi (h_{(1)}k)$.
\item[(PR4)] $\pi (h) \pi (k_{(2)}) \pi (\anti (k_{(1)})) = \pi (hk_{(2)}) \pi (\anti (k_{(1)}))$
\end{enumerate}

If the Hopf algebra $H$ is cocommutative, then the items in the definition of a partial representation coalesce into (\repref{partialrep1}), (\repref{partialrep2}) and (\repref{partialrep5}).
\end{remark}

If $\pi :H \rightarrow B$ is a representation of $H$ on $B$ (i.e.\ $\pi$ is an algebra morphism), then it is automatically a partial representation. If the partial representation $\pi$ satisfies the extra condition
\[
\pi (S(h_{(1)}))\pi (h_{(2)}) =\epsilon (h)1_B ,\quad \forall h\in H ,
\]
one can prove that $\pi$ is an algebra morphism of $H$ on $B$. Indeed,
\beqnast
\pi (h) \pi (k) & = & \pi (h) \pi (k_{(1)}) \epsilon (k_{(2)}) \\
& = &  \pi (h) \pi (k_{(1)}) \pi (S(k_{(2)}))\pi (k_{(3)}) \\
& = &  \pi (hk_{(1)}) \pi (S(k_{(2)}))\pi (k_{(3)}) \\
& = &  \pi (hk_{(1)}) \epsilon (k_{(2)})\\
& = & \pi (hk) .
\eqnast
The same is true if the condition 
\[
\pi (h_{(1)})\pi (S(h_{(2)})) =\epsilon (h) 1_B, \quad \forall h\in H,
\]
holds and the proof can be carried out in the same way using the item (\repref{partialrep5}).

\begin{prop} 
Given any partial representation $\pi :H\rightarrow B$, for all $h,k \in H$, we have the following equalities 
\beqna
\pi (S(h_{(1)})) \pi (h_{(2)}) \pi (S(h_{(3)})) & = & \pi (S(h)) \label{key1} \\
\pi (h_{(1)})\pi (S(h_{(2)})) \pi (h_{(3)}) & = & \pi (h) \label{key2}
\eqna
If the Hopf algebra $H$ has invertible antipode, then we have the extra equalities
\beqna
\pi (h_{(3)})\pi (\anti (h_{(2)})) \pi (h_{(1)}) & = & \pi (h) \label{key3}\\
\pi (\anti (h_{(3)})) \pi (h_{(2)}) \pi (\anti (h_{(1)})) &=&\pi (\anti (h)) \label{key4}
\eqna
\end{prop}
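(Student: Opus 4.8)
The plan is to derive each of the four identities by inserting a counit and then expanding it via the antipode axiom, so as to create the three consecutive factors that the partial representation axioms (PR2)--(PR5) allow us to ``join.'' The key observation, already used in the preceding remark of the excerpt, is that $\epsilon(h)1_B$ can be produced or absorbed using the relation $h_{(1)}S(h_{(2)}) = \epsilon(h)1_H = S(h_{(1)})h_{(2)}$ together with linearity of $\pi$, but the subtler point here is that we do \emph{not} assume $\pi(S(h_{(1)}))\pi(h_{(2)}) = \epsilon(h)1_B$; instead we must massage the three-fold products using the axioms directly.

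For identity \eqref{key2}, I would start from the right-hand side and write $\pi(h) = \pi(h_{(1)})\epsilon(h_{(2)})1_B$ after applying the counit to the second leg. The goal is to realize $\epsilon(h_{(2)})1_B$ as $\pi(h_{(2)})\pi(S(h_{(3)}))\cdots$; the clean route is instead to begin with the left-hand side $\pi(h_{(1)})\pi(S(h_{(2)}))\pi(h_{(3)})$ and apply (PR4) with the substitution that lets us merge $\pi(S(h_{(2)}))\pi(h_{(3)})$. Concretely, (PR4) reads $\pi(a)\pi(S(k_{(1)}))\pi(k_{(2)}) = \pi(aS(k_{(1)}))\pi(k_{(2)})$; taking $a = h_{(1)}$ and $k = h_{(2)}$ (so that $k_{(1)} = h_{(2)}, k_{(2)} = h_{(3)}$) gives $\pi(h_{(1)})\pi(S(h_{(2)}))\pi(h_{(3)}) = \pi(h_{(1)}S(h_{(2)}))\pi(h_{(3)})$. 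Now the coproduct reassociation collapses $h_{(1)}S(h_{(2)}) = \epsilon(h_{(1)})1_H$, leaving $\pi(\epsilon(h_{(1)})1_H)\pi(h_{(2)}) = \pi(h)$ by (PR1) and the counit identity. Identity \eqref{key1} is handled symmetrically: starting from $\pi(S(h_{(1)}))\pi(h_{(2)})\pi(S(h_{(3)}))$, I would apply (PR5) in the form $\pi(S(k_{(1)}))\pi(k_{(2)})\pi(b) = \pi(S(k_{(1)}))\pi(k_{(2)}b)$ with $k = h$ and $b = S(h_{(3)})$, so the last two factors merge to $\pi(h_{(2)}S(h_{(3)})) = \epsilon(h_{(2)})1_B$, yielding $\pi(S(h_{(1)}))\epsilon(h_{(2)}) = \pi(S(h))$.

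For the invertible-antipode identities \eqref{key3} and \eqref{key4}, I would use the alternate forms (PR3) and (PR4) rewritten with $\anti = S^{-1}$ as stated in the remark, and run the same strategy. For \eqref{key3}, $\pi(h_{(3)})\pi(\anti(h_{(2)}))\pi(h_{(1)})$ should collapse by applying the rewritten (PR4), $\pi(a)\pi(k_{(2)})\pi(\anti(k_{(1)})) = \pi(ak_{(2)})\pi(\anti(k_{(1)}))$, so that $\pi(h_{(3)})\pi(\anti(h_{(2)}))$ merges using $h_{(3)}\anti(h_{(2)})$-type cancellation via $S^{-1}(h_{(1)})h_{(2)} = \epsilon(h)1_H$ applied to the appropriate legs; one reindexes so that two adjacent factors become $\pi(h_{(2)}\anti(h_{(1)})) = \epsilon\cdot 1_B$ or similar, then (PR1) finishes. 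Identity \eqref{key4} is the mirror image using the rewritten (PR3).

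The main obstacle I anticipate is purely bookkeeping: getting the Sweedler indices to line up so that the axiom's ``witness'' structure matches the product at hand, and being careful about which two of the three factors actually merge. The axioms (PR2)--(PR5) are not symmetric---each permits joining a specific adjacent pair only in the presence of a specific third factor---so the challenge is choosing, for each of the four identities, the correct axiom and the correct substitution of $a, k, b$ so that after merging, a coproduct-antipode cancellation of the form $h_{(1)}S(h_{(2)}) = \epsilon(h)1_H$ (or its $S^{-1}$ counterpart) reduces the middle to a scalar. Once the right axiom is selected, each proof is a two-line calculation, so I would present them as short aligned displays rather than belaboring the Sweedler manipulations.
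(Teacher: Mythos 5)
Your derivations of \eqref{key1} and \eqref{key2} are correct and run on the same principle as the paper's (merge an adjacent pair of factors using one of the ``witness'' axioms, then collapse $h_{(1)}S(h_{(2)})$ or $S(h_{(1)})h_{(2)}$ to $\epsilon(h)1_H$ and finish with (PR1)); the only difference is which pair you merge. For \eqref{key1} you merge the last two factors via (PR5) where the paper merges the first two via (PR2), and for \eqref{key2} you merge the first two via (PR4) where the paper merges the last two via (PR3). Both choices work, so nothing is lost. Where your plan goes slightly astray is in \eqref{key3} and \eqref{key4}: the rewritten (PR4) from the remark has the shape $\pi(a)\,\pi(k_{(2)})\,\pi(\anti(k_{(1)}))$, i.e.\ the $S^{-1}$-factor sits in the third slot, whereas the left-hand side of \eqref{key3} is $\pi(h_{(3)})\,\pi(\anti(h_{(2)}))\,\pi(h_{(1)})$ with the $S^{-1}$-factor in the middle, so that axiom does not pattern-match and the merge you describe would not go through as stated. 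The fix is either to apply the original (PR2) after substituting $k=\anti(k')$ (which turns it into the shape $\pi(a)\,\pi(\anti(k'_{(2)}))\,\pi(k'_{(1)})=\pi(a\anti(k'_{(2)}))\,\pi(k'_{(1)})$, exactly what \eqref{key3} needs), or, more economically, to do what the paper does: substitute $h=\anti(h')$ directly into the already-proved \eqref{key1} and \eqref{key2}, using that $S^{-1}$ is an anti-coalgebra map, which yields \eqref{key3} and \eqref{key4} with no further appeal to the axioms. I would adopt the paper's substitution trick for the last two identities; it eliminates precisely the Sweedler bookkeeping you flag as the main obstacle.
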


\begin{proof} The first equality comes from the relations (\repref{partialrep1}) and (\repref{partialrep2}), the second is easily deduced from (\repref{partialrep1}) and (\repref{partialrep3}). If the Hopf algebra has invertible antipode, then (\ref{key3}) follows from (\ref{key1}) and (\ref{key4}) from (\ref{key2}) taking $h=\anti (h' )$.
\end{proof}

Partial representations of Hopf algebras are closely related partial actions of Hopf algebras.

\begin{defi} \cite{CJ} A left partial action of a Hopf algebra $H$ on a unital algebra $A$ is a linear map 
\[
\begin{array}{rccc} \cdot : & H \otimes A & \rightarrow & A \\
\, & h\otimes a & \mapsto & h\cdot a \end{array}
\]
such that
\begin{enumerate}[({PA}1)]
\item $1_H \cdot a =a$ for all $a\in A$.\label{partact1}
\item $h\cdot (ab)=(h_{(1)}\cdot a)(h_{(2)}\cdot b)$, $\forall \; h\in H$, $a,b\in A$.\label{partact2}
\item $h\cdot (k\cdot a)=(h_{(1)}\cdot \um )(h_{(2)}k\cdot a)$, $\forall \; h,k\in H$, $a\in A$.\label{partact3}
\end{enumerate}
The algebra $A$, on which $H$ acts partially is called a partial left $H$ module algebra.

We say that a partial action of $H$ on $A$ is {\em symmetric} if, in addition it satisfies 
\begin{enumerate}[({PA}1)]
\setcounter{enumi}{3}
\item $h\cdot (k\cdot a)=(h_{(1)}k\cdot a)(h_{(2)}\cdot \um ), \forall \; h,k\in H, a\in A.$ \label{partact3b}
\end{enumerate} 

If $(A,\cdot)$ and $(B,\cdot)$ are two partial left $H$-module algebras, then a morphism of partial $H$-module algebras
is an algebra map $f:A\to B$ such that $f(h\cdot a)=h\cdot f(b)$. The category of all symmetric partial left $H$-module
algebras and the morphisms of partial $H$-module algebras between them is denoted as $\ParAct_H$.
\end{defi}

Given a symmetric partial action of a Hopf algebra $H$, one can define a partial representation of $H$, as shown in the following example.

\begin{exmp}\label{example.partial.rep.in.End(A)}\exlabel{partact}
Let $A$ be a symmetric partial left $H$-module algebra, and let $B = \End(A)$. Define $\pi : H \rightarrow B $ by
$\pi(h) (a) = h \cdot a$.  Then $1_H \cdot a=a$, for all $a$ in $A$, implies  $\pi(1_H) = I_A$, so
(\repref{partialrep1}) is satisfied. 

With respect to (\repref{partialrep2}), we have on one hand
\beqnast
\pi (h)\pi (k_{(1)}) \pi (S(k_{(2)})) (a) & = & h \cdot (k_{(1)} \cdot (S(k_{(2)}) \cdot a)) \\
& \stackrel{\actref{partact3}}{=} &  h \cdot ( (k_{(1)} \cdot \um )(k_{(2)} S(k_{(3)}) \cdot a)) \\
& = &  h \cdot ((k \cdot \um ) a) 
\eqnast
and on the other hand, we have
\beqnast
\pi (hk_{(1)})\pi (S(k_{(2)}))(a) & = & hk_{(1)} \cdot (S(k_{(2)}) \cdot a) \\
& \stackrel{\actref{partact3}}{=} &  (h_{(1)}k_{(1)} \cdot \um ) (h_{(2)}k_{(2)}S(k_{(3)}) \cdot a))\\
& = &  (h_{(1)}k \cdot \um )(h_{(2)} \cdot a)\\
& \stackrel{\actref{partact2}}{=} &  (h_{(1)}k \cdot \um )(h_{(2)} \cdot \um )(h_{(3)} \cdot a)\\
& \stackrel{\actref{partact3b}}{=} &  (h_{(1)} \cdot (k \cdot \um ))(h_{(2)} \cdot a)\\
& \stackrel{\actref{partact2}}{=} &  h \cdot ((k \cdot \um )a)
\eqnast

For equation (\repref{partialrep3}) we have 
\begin{eqnarray*}
\pi (h_{(1)}) \pi (S(h_{(2)})) \pi (k) (a) &=& h_{(1)} \cdot (S(h_{(2)}) \cdot (k\cdot a)) \\
&\stackrel{\actref{partact3}}{=}& ( h_{(1)} \cdot \um ) ( h_{(2)} S(h_{(3)}) \cdot (k\cdot a)) \\
&=& ( h \cdot \um )(k\cdot a)
\end{eqnarray*}
and 
\begin{eqnarray*}
\pi (h_{(1)}) \pi (S(h_{(2)}) k) (a) &=& h_{(1)} \cdot (S(h_{(2)}) k\cdot a) \\
&\stackrel{\actref{partact3}}{=}& ( h_{(1)} \cdot \um ) ( h_{(2)} S(h_{(3)})k\cdot a) \\
&=& ( h\cdot \um )(k\cdot a)
\end{eqnarray*}
which proves the equality.

For the equation (\repref{partialrep4}), we have
\begin{eqnarray*}
\pi (h) \pi (S(k_{(1)})) \pi (k_{(2)}) (a) & = & h\cdot ( S(k_{(1)}) \cdot (k_{(2)} \cdot a)) \\
& \stackrel{\actref{partact3b}}{=} & h\cdot (S(k_{(2)})k_{(3)}\cdot a)(S(k_{(1)})\cdot \um ) \\
& = & h\cdot (a(S(k)\cdot \um ) =\\
& \stackrel{\actref{partact2}}{=} & (h_{(1)} \cdot a)(h_{(2)}S(k)\cdot \um )
\end{eqnarray*}
and
\begin{eqnarray*}
\pi (hS(k_{(1)})) \pi (k_{(2)}) (a) & = & hS(k_{(1)}) \cdot (k_{(2)} \cdot a) \\
& \stackrel{\actref{partact3b}}{=} & (h_{(1)}S(k_{(2)})k_{(3)}\cdot a)(h_{(2)}S(k_{(1)})\cdot \um ) \\
& = & (h_{(1)} \cdot a)(h_{(2)}S(k)\cdot \um )
\end{eqnarray*}

Finally, for the equation (\repref{partialrep5}), we have
\begin{eqnarray*}
\pi (S(h_{(1)}))\pi (h_{(2)}) \pi (k) (a) & = & S(h_{(1)})\cdot (h_{(2)} \cdot (k\cdot a)) \\
& \stackrel{\actref{partact3b}}{=} & (S(h_{(2)})h_{(3)} \cdot (k\cdot a))(S(h_{(1)})\cdot \um ) \\
& = & (k\cdot a)(S(h)\cdot \um)
\end{eqnarray*}
and
\begin{eqnarray*}
\pi (S(h_{(1)}))\pi (h_{(2)} k) (a) & = & S(h_{(1)})\cdot (h_{(2)} k\cdot a) \\
& \stackrel{\actref{partact3b}}{=} & (S(h_{(2)})h_{(3)} k\cdot a)(S(h_{(1)})\cdot \um ) \\
& = & (k\cdot a)(S(h)\cdot \um )
\end{eqnarray*}

Therefore, the partial action of $H$ on $A$ provides an example of a partial representation of $H$ on $\mbox{End}_k (A)$.
\end{exmp}

Given a partial action of a Hopf algebra $H$ on a unital algebra $A$, one can define an associative product on $A\otimes H$, given by
\[
(a\otimes h)(b\otimes k)=a(h_{(1)}\cdot b)\otimes h_{(2)}k
\]
Then, a new unital algebra is constructed as
\[
\underline{A\# H}=(A\otimes H)(\um \otimes 1_H ) .
\]
This unital algebra is called partial smash product \cite{CJ}. This algebra is generated by typical elements of the form
\[
a\# h= a(h_{(1)}\cdot \um )\otimes h_{(2)} ,
\]
and throughout this article, we shall use this abbreviated notation for the elements of the partial smash product. A straightforward calculation gives us the following properties of the partial smash product.

\begin{lemma} \label{propertiesofsmash} \cite{AB} Let $H$ be a Hopf algebra acting partially on a unital algebra $A$, then the elements of the partial smash product $\underline{A\# H}$ satisfy
\begin{enumerate}[(i)]
\item $(a\# h )(b\# k)=a(h_{(1)}\cdot b)\# h_{(2)}k$.
\item $a\# h =a(h_{(1)} \cdot \um ) \# h_{(2)}$.
\item The map $\phi_0 :A\rightarrow \underline{A\# H}$ given by $\phi_0 (a)=a\# 1_H$ is an algebra morphism.
\end{enumerate}
\end{lemma}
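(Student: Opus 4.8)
The plan is to treat all three statements as direct computations inside the twisted tensor product algebra $A\otimes H$, using only that by definition $a\# h=(a\otimes h)(\um\otimes 1_H)=a(h_{(1)}\cdot\um)\otimes h_{(2)}$, together with the single auxiliary ``absorption'' identity
\[
(h_{(1)}\cdot\um)(h_{(2)}\cdot b)=h\cdot b,
\]
which follows at once from \actref{partact2} applied to the product $\um\,b=b$. This identity is the engine of the whole argument: it is what allows me to collapse the spurious $h_{(i)}\cdot\um$ factors produced by the projection $\um\otimes 1_H$, and it will be applied several times.

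For item (i) I would start from the definition, write both $a\# h$ and $b\# k$ in the form $a(h_{(1)}\cdot\um)\otimes h_{(2)}$, and expand the product using the rule $(x\otimes p)(y\otimes q)=x(p_{(1)}\cdot y)\otimes p_{(2)}q$. One application of \actref{partact2} splits $h_{(2)}\cdot\bigl(b(k_{(1)}\cdot\um)\bigr)$ into $(h_{(2)}\cdot b)(h_{(3)}\cdot(k_{(1)}\cdot\um))$; the absorption identity then collapses $(h_{(1)}\cdot\um)(h_{(2)}\cdot b)$ to $h_{(1)}\cdot b$; next \actref{partact3} rewrites the nested term $h_{(2)}\cdot(k_{(1)}\cdot\um)$ as $(h_{(2)}\cdot\um)(h_{(3)}k_{(1)}\cdot\um)$, and a second use of the absorption identity removes the remaining $h\cdot\um$ factor, leaving exactly $a(h_{(1)}\cdot b)(h_{(2)}k_{(1)}\cdot\um)\otimes h_{(3)}k_{(2)}$, which is the expansion of $a(h_{(1)}\cdot b)\# h_{(2)}k$. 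The only delicate point is keeping the Sweedler indices consistent under coassociativity as adjacent factors merge.

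Item (ii) is the idempotency statement $a\# h=a(h_{(1)}\cdot\um)\# h_{(2)}$. Expanding the right-hand side gives $a(h_{(1)}\cdot\um)(h_{(2)}\cdot\um)\otimes h_{(3)}$, and the absorption identity with $b=\um$ merges the two partial units into a single $h_{(1)}\cdot\um$, recovering $a(h_{(1)}\cdot\um)\otimes h_{(2)}=a\# h$. Finally, for item (iii) I would compute $\phi_0(a)=a\#1_H=a\otimes 1_H$ via \actref{partact1}, verify multiplicativity from (i), namely $(a\#1_H)(b\#1_H)=a(1_H\cdot b)\#1_H=ab\#1_H=\phi_0(ab)$, and check that $\phi_0(\um)=\um\otimes 1_H$ is the unit of $\underline{A\# H}$, the two-sided unit property following from (i) on one side and from (i) followed by (ii) on the other.

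I expect item (i) to be the only genuine obstacle: it is where \actref{partact2} and \actref{partact3} interact and where the coproduct indices must be tracked carefully through the repeated merging of partial-unit factors. Items (ii) and (iii) are then short consequences once the absorption identity and (i) are in hand.
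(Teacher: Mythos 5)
Your computation is correct and is precisely the ``straightforward calculation'' the paper alludes to --- it offers no proof of this lemma, only the citation to [AB] --- and your absorption identity $(h_{(1)}\cdot 1_A)(h_{(2)}\cdot b)=h\cdot b$ is indeed the right engine for all three items. The one small point worth making explicit is that the final collapse in (i) (and the unit check in (iii)) actually uses the mirror identity $(h_{(1)}\cdot b)(h_{(2)}\cdot 1_A)=h\cdot b$, i.e.\ (PA2) applied to $b\,1_A=b$ rather than to $1_A\,b=b$; this is equally immediate, but since $A$ need not be commutative it is a genuinely separate instance of the axiom.
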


Partial smash products give another source of examples of partial representations of a Hopf algebra $H$.

\begin{exmp}\label{smash}\exlabel{smash} Given a partial action of a Hopf algebra on a unital algebra 
$A$, the linear map $\pi_0 :H\rightarrow \underline{A\# H}$ given by $\pi_0 (h)=\um \# h$ is a partial representation of $H$. First, the item (\repref{partialrep1}) is easily obtained, for $\pi_0 (1_H )=\um \# 1_H =1_{\underline{A\# H}}$. Now, verifying the item (\repref{partialrep2}), we have on one hand
\beqnast
\pi_0 (h)\pi_0 (k_{(1)}) \pi_0 (S(k_{(2)})) & = & (\um \# h)(\um \# k_{(1)})(\um \# S(k_{(2)})) \\
& = & (\um \# h)((k_{(1)}\cdot \um ) \# k_{(2)}S(k_{(3)})) \\
& = & (\um \# h )((k\cdot \um )\# 1_H) \\
& = & (h_{(1)} \cdot (k\cdot \um ))\# h_{(2)}
\eqnast
and on the other hand
\beqnast
\pi_0 (hk_{(1)}) \pi_0 (S(k_{(2)})) & = & (\um \# hk_{(1)})(\um \# S(k_{(2)})) \\
& = & ((h_{(1)}k_{(1)}\cdot \um ) \# h_{(2)}k_{(2)}S(k_{(3)})) \\
& = & ((h_{(1)}k\cdot \um )\# h_{(2)}) \\
& = & ((h_{(1)}k\cdot \um )(h_{(2)}\cdot \um )\# h_{(3)}) \\
& = & (h_{(1)} \cdot (k\cdot \um ))\# h_{(2)}
\eqnast
For the equation (\repref{partialrep3}), we have
\begin{eqnarray*}
\pi_0 (h_{(1)})\pi_0 (S(h_{(2)}))\pi_0 (k)&=&
(1_A \# h_{(1)})(1_A \# S(h_{(2)}))(1_A \# k)\\
&=& (h_{(1)}\cdot 1_A \# h_{(2)}S(h_{(3)}))(1_A \# k)\\
&=& (h \cdot 1_H) (1_A \# k)=h\cdot 1_A \# k
\end{eqnarray*}
and
\begin{eqnarray*}
\pi_0 (h_{(1)})\pi_0 (S(h_{(2)})k)&=&
(1_A \# h_{(1)})(1_A \# S(h_{(2)})k)\\
&=& h_{(1)}\cdot 1_A\# h_{(2)}S(h_{(3)})k = h\cdot 1_A \# k
\end{eqnarray*}

The relations (\repref{partialrep4}) and (\repref{partialrep5}) are less obvious. For 
(\repref{partialrep4}), we have
\begin{eqnarray*}
\pi_0 (h)\pi_0 (S(k_{(1)}))\pi_0 (k_{(2)})&=&
(1_A \# h)(1_A \# S(k_{(1)}))(1_A \# k_{(2)})\\
& = & (\um \# h)((S(k_{(2)})\cdot \um )\# S(k_{(1)})k_{(3)}) \\
& = & (h_{(1)}\cdot (S(k_{(2)})\cdot \um ))\# h_{(2)} S(k_{(1)})k_{(3)} \\
& = & (h_{(1)}\cdot \um )(h_{(2)}S(k_{(2)})\cdot \um )\# h_{(3)} S(k_{(1)})k_{(3)}
\end{eqnarray*}
and
\begin{eqnarray*}
\pi_0 (hS(k_{(1)}))\pi_0 (k_{(2)})&=&
(1_A \# h S(k_{(1)}))(1_A \# k_{(2)})\\
& = & (h_{(1)}S(k_{(2)})\cdot \um )\# h_{(2)} S(k_{(1)})k_{(3)} \\
& = & (h_{(1)}S(k_{(3)})\cdot \um )( h_{(2)} S(k_{(2)})k_{(4)} \cdot \um ) \# h_{(3)} S(k_{(1)})k_{(5)} \\
& = & (h_{(1)}S(k_{(2)})\cdot ( k_{(3)} \cdot \um )) \# h_{(2)} S(k_{(1)})k_{(4)} \\
& = & ( h_{(2)} S(k_{(3)})k_{(4)} \cdot \um ) (h_{(2)}S(k_{(2)})\cdot \um )\# h_{(3)} S(k_{(1)})k_{(5)} \\
& = & (h_{(1)}\cdot \um )(h_{(2)}S(k_{(2)})\cdot \um )\# h_{(3)} S(k_{(1)})k_{(3)}
\end{eqnarray*}

Finally, for (\repref{partialrep5}), we have
\begin{eqnarray*}
\pi_0 (S(h_{(1)}))\pi_0 (h_{(2)}) \pi_0 (k) &=&
(1_A \# S(h_{(1)}))(1_A \# h_{(2)})(\um \# k) \\
& = & ((S(h_{(2)})\cdot 1_A ) \# S(h_{(1)})h_{(3)})(\um \# k) \\
& = & (S(h_{(3)})\cdot 1_A )(S(h_{(2)})h_{(4)} \cdot \um ) \# S(h_{(1)})h_{(5)} k \\
& = & (S(h_{(2)})\cdot (h_{(3)} \cdot \um )) \# S(h_{(1)})h_{(4)} k \\
& = & (S(h_{(3)})h_{(4)} \cdot \um ) (S(h_{(2)})\cdot 1_A )\# S(h_{(1)})h_{(5)} k \\
& = & (S(h_{(2)})\cdot 1_A )\# S(h_{(1)})h_{(3)} k \\
& = & (1_A \# S(h_{(1)}))(1_A \# h_{(2)} k) \\
& = & \pi_0 (S(h_{(1)}))\pi_0 (h_{(2)} k)
\end{eqnarray*}

Therefore, $\pi_0$ is indeed a partial representation of $H$ on the partial smash product $\underline{A\# H}$.
\end{exmp}
 
As in the group case, the constructions of \exref{partact} and \exref{smash} are functorial, and related by a natural
transformation. To obtain this natural transformation, we introduce the following definition.

\begin{defi} Consider a unital algebras $A$ and $B$, and a Hopf algebra $H$ that acts partially on $A$.
A {\em covariant pair} associated to these data is a pair of maps $(\phi ,\pi )$ where $\phi :A\rightarrow B$ is an
algebra morphism and $\pi :H\rightarrow B$ is a partial representation such that
\begin{enumerate}
\item[(CP1)] $\phi (h\cdot a)=\pi (h_{(1)}) \phi (a) \pi (S(h_{(2)}))$, for every $h\in H$ and $a\in A$ .
\item[(CP2)] $\phi (a) \pi (S(h_{(1)})) \pi (h_{(2)})=\pi (S(h_{(1)})) \pi (h_{(2)}) \phi (a)$, for every $h\in H$ and
$a\in A$.
\end{enumerate}
\end{defi}
 
This definition is well motivated because this is what happens for the pair $(\phi_0 , \pi_0)$, where $\phi_0
:A\rightarrow \underline{A\# H}$ is the canonical morphism given by $\phi_0 (a) =a\# 1_H$ and $\pi_0 :H \rightarrow
\underline{A\# H}$ is the partial representation $\pi_0 (h) =\um \# h$. Indeed,
\begin{eqnarray*}
\pi_0 (h_{(1)}) \phi_0 (a) \pi_0 (S(h_{(2)})) & = & (\um \# h_{(1)})(a\# 1_H)(\um \# S(h_{(2)})) \\
& = & (h_{(1)} \cdot a) \# h_{(2)})(\um \# S(h_{(3)})) \\
& = & (h_{(1)} \cdot a)(h_{(2)} \cdot \um )\# h_{(3)}S(h_{(3)})\\
& = & (h\cdot a)\# 1_H =\phi_0 (h\cdot a),
\end{eqnarray*}
and
\begin{eqnarray*}
\phi_0 (a) \pi_0 (S(h_{(1)})) \pi_0 (h_{(2)}) & = & (a\# 1_H )(\um \# S(h_{(1)}))(\um \# h_{(2)})\\
& = & a(S(h_{(2)})\cdot \um ) \# S(h_{(1)} h_{(3)}) ,
\end{eqnarray*}
while, on the other hand
\begin{eqnarray*}
\pi_0 (S(h_{(1)})) \pi_0 (h_{(2)})\phi_0 (a) & = & 
(\um \# S(h_{(1)}))(\um \# h_{(2)}) (a\# 1_H )\\
& = & (S(h_{(2)}) \cdot \um ) \# S(h_{(1)})h_{(2)})(a\# 1_H )\\
& = & (S(h_{(3)}) \cdot \um ) (S(h_{(2)})h_{(4)} \cdot a) \# S(h_{(1)})h_{(5)} \\
& = & (S(h_{(2)}) \cdot (h_{(3)} \cdot a)) \# S(h_{(1)})h_{(4)} \\
& = & (S(h_{(3)})h_{(4)} \cdot a) (S(h_{(2)}) \cdot \um ) \# S(h_{(1)})h_{(5)} \\
& = & a(S(h_{(2)})\cdot \um ) \# S(h_{(1)} h_{(3)}) .
\end{eqnarray*}

With this definition at hand, we can prove that the partial smash product has the following universal property.

\begin{thm} Let $A$ and $B$ be unital algebras and $H$ a Hopf algebra with a symmetric partial action on $A$. Suppose
that $(\phi ,\pi )$ is a covariant pair associated to these data. Then there exists a unique algebra morphism 
$\Phi :\underline{A\# H}\rightarrow B$ such that $\phi =\Phi \circ \phi_0$ and 
$\pi =\Phi \circ \pi_0$.
\end{thm}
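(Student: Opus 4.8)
The plan is to set $\Phi(a\# h)=\phi(a)\pi(h)$ on the elements $a\# h$ spanning $\underline{A\# H}$, and to check that this prescription yields a well-defined unital algebra morphism satisfying $\Phi\circ\phi_0=\phi$ and $\Phi\circ\pi_0=\pi$, and that it is the only such morphism.

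For well-definedness I would factor through the tensor product: the assignment $a\otimes h\mapsto \phi(a)\pi(h)$ defines a linear map $\tilde{\Phi}:A\otimes H\to B$, and I let $\Phi$ be its restriction to the subspace $\underline{A\# H}\subseteq A\otimes H$, so that no independent consistency check is needed. Since $a\# h=a(h_{(1)}\cdot\um)\otimes h_{(2)}$ as an element of $A\otimes H$, this gives $\Phi(a\# h)=\phi(a)\,\phi(h_{(1)}\cdot\um)\,\pi(h_{(2)})$; the announced formula $\Phi(a\# h)=\phi(a)\pi(h)$ then follows once I establish the identity $\phi(h_{(1)}\cdot\um)\pi(h_{(2)})=\pi(h)$. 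This is short: applying (CP1) with $a=\um$ gives $\phi(h_{(1)}\cdot\um)=\pi(h_{(1)})\pi(S(h_{(2)}))$ (using $\phi(\um)=1_B$), and equation (\ref{key2}) then collapses $\pi(h_{(1)})\pi(S(h_{(2)}))\pi(h_{(3)})$ to $\pi(h)$. Unitality is the one-line $\Phi(\um\# 1_H)=\phi(\um)\pi(1_H)=1_B$.

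The main obstacle is multiplicativity, i.e.\ that the restriction $\Phi$ is an algebra map. By Lemma~\ref{propertiesofsmash}(i) the product of two generators is $(a\# h)(b\# k)=a(h_{(1)}\cdot b)\# h_{(2)}k$, so after using the formula above it suffices to prove $\phi(h_{(1)}\cdot b)\,\pi(h_{(2)}k)=\pi(h)\,\phi(b)\,\pi(k)$; prepending $\phi(a)$ then matches $\Phi(a\# h)\Phi(b\# k)=\phi(a)\pi(h)\phi(b)\pi(k)$. I would prove this identity by a four-step substitution chain: first rewrite $\phi(h_{(1)}\cdot b)=\pi(h_{(1)})\phi(b)\pi(S(h_{(2)}))$ by (CP1); next split $\pi(S(h_{(2)}))\pi(h_{(3)}k)=\pi(S(h_{(2)}))\pi(h_{(3)})\pi(k)$ by reading axiom (\repref{partialrep5}) from right to left; then commute $\phi(b)$ through the central factor via (CP2), which applies to $\pi(S(h_{(2)}))\pi(h_{(3)})$; and finally contract $\pi(h_{(1)})\pi(S(h_{(2)}))\pi(h_{(3)})=\pi(h)$ by (\ref{key2}). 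The delicate point is the bookkeeping of the Sweedler indices while these four rewrites are applied in exactly this order.

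Finally, the factorization identities are immediate from the formula: $\Phi(\phi_0(a))=\Phi(a\# 1_H)=\phi(a)$ and $\Phi(\pi_0(h))=\Phi(\um\# h)=\pi(h)$. For uniqueness, note that $a\# h=(a\# 1_H)(\um\# h)=\phi_0(a)\pi_0(h)$ by Lemma~\ref{propertiesofsmash}(i), so $\phi_0(A)$ and $\pi_0(H)$ generate $\underline{A\# H}$ as an algebra; hence any algebra morphism $\Phi'$ with $\Phi'\circ\phi_0=\phi$ and $\Phi'\circ\pi_0=\pi$ must satisfy $\Phi'(a\# h)=\phi(a)\pi(h)=\Phi(a\# h)$, forcing $\Phi'=\Phi$.
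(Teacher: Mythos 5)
Your proposal is correct and follows essentially the same route as the paper: the formula $\Phi(a\# h)=\phi(a)\pi(h)$, the multiplicativity chain (CP1), (\repref{partialrep5}), (CP2), (\ref{key2}) in that order, and uniqueness via $a\# h=\phi_0(a)\pi_0(h)$. Your only addition is the explicit well-definedness argument obtained by restricting the linear map $a\otimes h\mapsto\phi(a)\pi(h)$ from $A\otimes H$ to the subspace $\underline{A\# H}$ and checking $\phi(h_{(1)}\cdot \um)\pi(h_{(2)})=\pi(h)$; this is a point the paper's proof passes over silently, and your treatment of it is sound.
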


\begin{proof} Define the linear map
\[
\begin{array}{rccc} \Phi :& \underline{A\# H} & \rightarrow & B \\
\, & a\# h & \mapsto & \phi (a) \pi (h) \end{array}
\]
Let us verify that this is, in fact, an algebra morphism:
\beqnast
\Phi ((a\# h)(b\# k)) & = & \Phi (a(h_{(1)}\cdot b )\# h_{(2)}k) \\
& = & \phi (a(h_{(1)}\cdot b ))\pi (h_{(2)}k)\\
& \stackrel{(CP1)}{=} & \phi (a) \pi (h_{(1)} )\phi (b) \pi (S(h_{(2)})) \pi (h_{(3)}k)\\
& \stackrel{(PR5)}{=} & \phi (a) \pi (h_{(1)} )\phi (b) \pi (S(h_{(2)})) \pi (h_{(3)}) \pi (k)\\
& \stackrel{(CP2)}{=} & \phi (a) \pi (h_{(1)} ) \pi (S(h_{(2)})) \pi (h_{(3)})\phi (b)  \pi (k)\\
& = & \phi (a) \pi (h)\phi (b) \pi (k)\\
& = & \Phi (a\# h) \Phi (b\# k) .
\eqnast
By construction, one can easily see that $\phi =\Phi \circ \phi_0$ and $\pi =\Phi \circ \pi_0$. Finally, for the
uniqueness, suppose that there is another morphism $\Psi :\underline{A\# H}\rightarrow B$ factorizing both $\phi$ and
$\pi$. Then, for any element $a\# h \in \underline{A\# H}$ we have
\beqnast
\Psi (a\# h) & = & \Psi ((a\# 1_H )(\um \# h)) =\Psi (a\# 1_H ) \Psi (\um \# h)\\
& = & \Psi (\phi_0 (a)) \Psi (\pi_0 (h))=\phi (a) \pi (h)=\Phi (a\# h).
\eqnast
\end{proof}

Thanks to the above theorem the two main examples of partial representations are related as in the following result.

\begin{thm}\thlabel{Pi}
Let $H$ be a Hopf algebra, then there exist functors 
\begin{eqnarray*}
&\xymatrix{\ParAct_H \ar@<.5ex>[rr]^-{\Pi_0} \ar@<-.5ex>[rr]_-{\Pi} && \ParRep_H }\\ 
&\qquad \Pi_0(A,\cdot )=(A\# H,\pi_0), \quad \Pi(A,\cdot)=(\End_k(A),\pi)
\end{eqnarray*}
and a natural transformation $\ul\Phi:\Pi_0\to\Pi$
\end{thm}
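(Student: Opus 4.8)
The plan is to split the statement into three tasks: well-definedness of $\Pi_0$ and $\Pi$ on objects, their extension to morphisms, and the construction of $\ul\Phi$ together with its naturality. The object level is already settled: \exref{smash} shows that $(\underline{A\# H},\pi_0)$ is a partial representation and \exref{partact} shows the same for $(\End_k(A),\pi)$, for every symmetric partial $H$-module algebra $(A,\cdot)$. So all the remaining content lies in the morphisms and in naturality.

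For $\Pi_0$ I would send a morphism $f\colon A\to A'$ of partial $H$-module algebras to $f\# H\colon \underline{A\# H}\to \underline{A'\# H}$, $a\# h\mapsto f(a)\# h$. This is well defined on the smash product: writing $a\# h=a(h_{(1)}\cdot 1_A)\# h_{(2)}$ as in Lemma~\ref{propertiesofsmash}(ii) and using $f(h\cdot 1_A)=h\cdot 1_{A'}$ (valid since $f$ preserves units and intertwines the action), the two expressions for $(f\# H)(a\# h)$ coincide. Multiplicativity follows from Lemma~\ref{propertiesofsmash}(i) together with $f(h_{(1)}\cdot b)=h_{(1)}\cdot f(b)$, and unitality is clear; moreover $(f\# H)(1_A\# h)=1_{A'}\# h$, so $f\# H$ is a morphism in $\ParRep_H$. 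Preservation of identities and composites is immediate, giving the functor $\Pi_0$.

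The subtle point, and the step I expect to be the main obstacle, is the action of $\Pi$ on morphisms. An algebra map $f\colon A\to A'$ does not in general induce an algebra map $\End_k(A)\to\End_k(A')$, so $\Pi(f)$ cannot be produced by a naive push-forward of endomorphisms. What the naturality square below forces is that $\Pi(f)$ should send $\psi\in\End_k(A)$ to an endomorphism $\psi'$ of $A'$ with $\psi'\circ f=f\circ\psi$, i.e.\ $\Pi(f)$ should be the map turning $f$ into an intertwiner of $\pi$ and $\pi'$. I would make this precise through the viewpoint announced in \reref{freefunctor}: regard $(\End_k(A),\pi)$ as the partial module underlying $A$ and read $\Pi$ as the forgetful functor from symmetric partial $H$-module algebras to partial modules (to be made rigorous later via the isomorphism $\ParRep_H\cong H_{par}/\Alg_k$). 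Under this reading $\Pi(f)$ is $f$ itself, seen as an intertwiner, and functoriality is automatic; I would prove functoriality of $\Pi$ in this form rather than by fabricating maps between endomorphism algebras.

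Finally, for $\ul\Phi$ I would define its component at $(A,\cdot)$ via the universal property of the partial smash product established above, applied to the covariant pair $(\phi,\pi)$ in which $\phi\colon A\to\End_k(A)$ is left multiplication and $\pi(h)(a')=h\cdot a'$. The one real verification is covariance: evaluating (CP1) on $a'\in A$ reduces to $(h\cdot a)a'=(h_{(1)}\cdot a)(h_{(2)}\cdot 1_A)a'$, which follows from \actref{partact2} and the telescoping $h_{(2)}\cdot(S(h_{(3)})\cdot a')=(h_{(2)}\cdot 1_A)a'$ coming from \actref{partact3}, while (CP2) is analogous using the symmetry axiom \actref{partact3b}. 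The universal property then yields the algebra map $\Phi_0\colon\underline{A\# H}\to\End_k(A)$, $a\# h\mapsto[a'\mapsto a(h\cdot a')]$, and $\Phi_0\circ\pi_0=\pi$ makes it a morphism in $\ParRep_H$. Naturality of $\ul\Phi$ is the identity $\Pi(f)\circ\Phi_0^A=\Phi_0^{A'}\circ(f\# H)$, which on $a\# h$ and an argument in $A'$ collapses to $f(a(h\cdot a'))=f(a)(h\cdot f(a'))$, true because $f$ is multiplicative and intertwines the action. Hence, modulo the conceptual handling of $\Pi$ on morphisms, all verifications are direct consequences of the lemmas and examples already in place.
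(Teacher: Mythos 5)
Your proposal is correct, and its substantive core---checking that left multiplication $\phi:A\to\End_k(A)$ together with $\pi(h)(a')=h\cdot a'$ forms a covariant pair via \actref{partact2}, \actref{partact3}, \actref{partact3b}, and then invoking the universal property of $\underline{A\# H}$ to produce the components $\Phi_0^A(a\# h)(a')=a(h\cdot a')$ of $\ul\Phi$---is exactly the paper's argument. Where you genuinely diverge is on the functoriality of $\Pi$: the paper dismisses this with ``it is easy to see that these constructions are indeed functorial'' and leaves naturality to the reader, whereas you correctly observe that an algebra map $f:A\to A'$ induces no algebra map $\End_k(A)\to\End_k(A')$, so $\Pi$ cannot literally take values in $\ParRep_H$ with its stated morphisms. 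Your resolution---reading $\Pi(f)$ as $f$ itself qua intertwiner, i.e.\ taking the codomain to be the category of partial $H$-modules---is precisely the reading the paper only adopts later (\reref{freefunctor} and the corollary identifying $\Pi$ with the forgetful functor out of the algebra objects in ${}_{H_{par}}\Mm$), and under it your naturality identity $f(a(h\cdot a'))=f(a)(h\cdot f(a'))$ is the correct and complete check. The one point to make explicit is that $\Pi_0$ must then also be regarded as landing in that same partial module category (with $\underline{A\# H}$ acting on $A$ through $\Phi_0^A$ and $f\# H$ read as an intertwiner), so that the naturality square for $\ul\Phi$ typechecks; with that said, your proof is complete and is more careful than the paper's on the one point where the statement as printed is imprecise.
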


\begin{proof}
The construction of the functor $\Pi$ follows from \exref{partact}, the construction of the functor $\Pi_0$ follows
from \exref{smash}. It is easy to see that these constructions are indeed functorial. Moreover, if $(A,\cdot)$ is a
symmetric left parial $H$-module and $\Pi(A,\cdot)=(\End_k(A),\pi)$ the associated partial representation on $\End(A)$.
Consider $\phi:A\to \End(A), \phi(a)(a')=aa'$, then $(\phi,\pi)$ is a covariant pair. Indeed, take $h\in H$ and
$a,a'\in A$ and let us check (CP1).
\begin{eqnarray*}
\pi(h_{(1)})\phi(a)\pi(S(h_{2}))(a')&=&h_{(1)}\cdot (a(S(h_{(2)})\cdot a'))=(h_{(1)}\cdot a)(h_{(2)}\cdot
(S(h_{(3)})\cdot a))\\
&=&(h_{(1)}\cdot a)(h_{(2)}S(h_{(3)})\cdot a) = (h\cdot a)a'\\
&=& \phi(h\cdot a)(a')
\end{eqnarray*}
Next, for (CP2), we find
\begin{eqnarray*}
\pi(S(h_{(1)}))\pi(h_{(2)})\phi(a)(a')&=& S(h_{(1)})\cdot (h_{(2)}\cdot (aa')) 
= S(h_{(1)})\cdot ((h_{(2)}\cdot a)(h_{(3)}\cdot a'))\\
&=& (S(h_{(2)})\cdot (h_{(3)}\cdot a))(S(h_{(1)})\cdot (h_{(4)}\cdot a'))\\
&=& (S(h_{(2)})h_{(3)}\cdot a))(S(h_{(1)})\cdot (h_{(4)}\cdot a'))\\
&=& a(S(h_{(1)})\cdot (h_{(2)}\cdot a') = \phi(a)\pi(S(h_{(1)}))\pi(h_{(2)})(a')
\end{eqnarray*}

Consequently, there exists
an algebra morphism $\Phi:A\# H\to \End_k(A)$ such that $\pi=\Phi\circ \pi_0$, and hence $\Phi$ is a morphism of partial
representations. The verification that $\Phi$ is natural in $A$ is straightforward and left to the reader.
\end{proof}

\section{The universal partial ``Hopf'' algebra $H_{par}$}

\subsection{$H_{par}$ and the universal property}

As in the case of partial representations of a group $G$, which can be factorized by an algebra morphism defined on the
partial group algebra $k_{par}G$, in the case of partial representations of a Hopf algebra $H$, it is possible to
construct an algebra associated to $H$ which factorizes partial representations by algebra morphisms. This new algebra
will be called partial Hopf algebra.

\begin{defi} Let $H$ be a Hopf algebra 
and let $T(H)$ be the tensor algebra of the vector space $H$. The {\em partial Hopf algebra} $H_{par}$ is the quotient
of $T(H)$ by the ideal $I$ generated by elements of the form 
\begin{enumerate}
\item $1_H - 1_{T(H)}$; 
\item $h \ot k_{(1)} \ot S(k_{(2)}) - hk_{(1)} \ot S(k_{(2)})$, for all $h,k \in H$;
\item $h_{(1)} \ot S(h_{(2)}) \ot k - h_{(1)} \ot S(h_{(2)})k$, for all $h,k \in H$;
\item $h \ot S(k_{(1)}) \ot k_{(2)} - hS(k_{(1)}) \ot k_{(2)}$, for all $h,k \in H$;
\item $S(h_{(1)}) \ot h_{(2)} \ot k - S(h_{(1)}) \ot h_{(2)}k$, for all $h,k \in H$.
\end{enumerate}
\end{defi}

Denoting the class of $h\in H$ in $H_{par}$ by $[h]$, it is easy to see that the map 
\[
\begin{array}{rccc} [\underline{\; }]:& H & \rightarrow & H_{par} \\
\, & h & \mapsto & [h] \end{array}
\]
satisfy the following relations
\begin{enumerate}
\item $[\alpha h + \beta k] = \alpha [h]+ \beta [k]$, for all $\alpha, \beta \in k$ and $h,k \in H$;
\item $[1_H] = 1_{H_{par}}$; 
\item $[h][k_{(1)}][S(k_{(2)})] = [hk_{(1)}][S(k_{(2)})]$, for all $h,k \in H$;
\item $[h_{(1)}][S(h_{(2)})][k] = [h_{(1)}][S(h_{(2)})k]$, for all $h,k \in H$;
\item $[h][S(k_{(1)})][k_{(2)}] = [hS(k_{(1)})][k_{(2)}]$, for all $h,k \in H$;
\item $[S(h_{(1)})][h_{(2)}][k] = [S(h_{(1)})][h_{(2)}k]$, for all $h,k \in H$.
\end{enumerate}
Therefore, the linear map $[\underline{\;}]$ is a partial representation of the Hopf algebra $H$ on $H_{par}$.

The partial Hopf algebra $H_{par}$ has the following universal property.

\begin{thm}\thlabel{univHpar}
For every partial representation $\pi: H \rightarrow B$ there is a unique morphism of algebras $\hat{\pi}: H_{par}
\rightarrow B$ such that 
$\pi = \hat{\pi} \circ  [ \underline{\; }  ]$. Conversely, given an algebra morphism $\phi : H_{par} \rightarrow B$,
there exists a unique partial representation $\pi_{\phi} :H\rightarrow B$ such that $\phi =\hat \pi_{\phi}$.

In other words, the following functors establish an isomorphism between the category of partial representations and the
co-slice category $H_{par}/\Alg_k$
\[
\xymatrix{
\ParRep_H \ar@<.5ex>[rr]^-{L} && H_{par}/\Alg_k \ar@<.5ex>[ll]^-R
}
\]
where $L((B, \pi ))= (B,\hat{\pi})$, and $R((B, \phi )) =(B,\pi_{\phi})$.
\end{thm}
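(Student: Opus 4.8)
The plan is to exploit the fact that $H_{par}$ is by construction a quotient of the tensor algebra $T(H)$, so that everything reduces to the universal property of $T(H)$ together with a direct match between the partial-representation axioms and the defining relations of the ideal $I$. First I would start from a partial representation $\pi:H\to B$. Since $\pi$ is merely a $k$-linear map and $T(H)$ is the free (tensor) algebra on the $k$-module $H$, the universal property of the tensor algebra yields a unique algebra morphism $\tilde\pi:T(H)\to B$ restricting to $\pi$ on the generators; explicitly $\tilde\pi(h_1\otimes\cdots\otimes h_n)=\pi(h_1)\cdots\pi(h_n)$.

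The key step is to show that $\tilde\pi$ annihilates the ideal $I$, and hence descends to $H_{par}=T(H)/I$. Because $\tilde\pi$ is an algebra morphism, its kernel is an ideal, so it suffices to check that $\tilde\pi$ vanishes on each of the five families of generators of $I$. But these generators are designed to correspond exactly to the axioms (\repref{partialrep1})--(\repref{partialrep5}): for instance the generator $1_H-1_{T(H)}$ is sent to $\pi(1_H)-1_B=0$ by (\repref{partialrep1}), the generator $h\otimes k_{(1)}\otimes S(k_{(2)})-hk_{(1)}\otimes S(k_{(2)})$ is sent to $\pi(h)\pi(k_{(1)})\pi(S(k_{(2)}))-\pi(hk_{(1)})\pi(S(k_{(2)}))=0$ by (\repref{partialrep2}), and similarly for the remaining three. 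Thus $\tilde\pi$ factors through a unique algebra morphism $\hat\pi:H_{par}\to B$ with $\hat\pi([h])=\pi(h)$, i.e.\ $\pi=\hat\pi\circ[\underline{\;}]$. Uniqueness of $\hat\pi$ is immediate, since the classes $[h]$ generate $H_{par}$ as an algebra, so any algebra morphism agreeing with $\pi$ after precomposition with $[\underline{\;}]$ is forced on generators and hence everywhere.

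For the converse, given an algebra morphism $\phi:H_{par}\to B$ I would set $\pi_{\phi}:=\phi\circ[\underline{\;}]$, which is $k$-linear. Since $[\underline{\;}]$ is itself a partial representation (as verified just before the statement) and $\phi$ is an algebra morphism, applying $\phi$ to each of the identities satisfied by $[\underline{\;}]$ shows that $\pi_{\phi}$ again satisfies (\repref{partialrep1})--(\repref{partialrep5}); hence $\pi_{\phi}\in\ParRep_H$. Moreover $\widehat{\pi_{\phi}}=\phi$, because both are algebra morphisms out of $H_{par}$ agreeing on the generators $[h]$. Combined with $\pi_{\hat\pi}=\hat\pi\circ[\underline{\;}]=\pi$, this yields the bijective correspondence between partial representations on $B$ and algebra morphisms out of $H_{par}$.

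Finally, to upgrade this to the claimed isomorphism of categories I would verify that $L$ and $R$ are well defined on morphisms and mutually inverse. For a morphism $f:(B,\pi)\to(B',\pi')$ of partial representations one checks that $f\circ\hat\pi=\widehat{\pi'}$ by comparing the two algebra morphisms on the generators $[h]$, so $L$ sends $f$ to a morphism in $H_{par}/\Alg_k$; conversely a morphism $g$ in the co-slice category satisfies $g\circ\pi_{\phi}=g\circ\phi\circ[\underline{\;}]=\phi'\circ[\underline{\;}]=\pi_{\phi'}$, so $R$ sends it to a morphism of partial representations. The object-level identities $R\circ L=\mathrm{id}$ and $L\circ R=\mathrm{id}$ were established above, and they persist on morphisms by construction. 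The only place requiring genuine care is the bookkeeping in the key step, matching each generator of $I$ to its corresponding axiom; everything else is formal, and I expect no real obstacle beyond this verification.
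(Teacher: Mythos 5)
Your proposal is correct and follows essentially the same route as the paper's proof: factor the linear map through the tensor algebra $T(H)$, observe that the partial-representation axioms (PR1)--(PR5) kill precisely the generators of the ideal $I$ so that the induced map descends to $H_{par}$, and obtain the converse by precomposing an algebra map with the canonical partial representation $[\underline{\;}]$. Your write-up is in fact somewhat more detailed than the paper's, which leaves the generator-by-generator check and the verification of functoriality of $L$ and $R$ to the reader.
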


\begin{proof}
Let $(B,\pi)$ be a partial representation. From the universal property of $T(H)$ it follows that there is an algebra
morphism $\varphi: T(H) \rightarrow B$ such that $\varphi(h) = \pi(h)$ for every 
$h \in H$. Since $\pi$ is a partial representation, $\pi$ kills every generator of $I$ and, therefore, there exists a
unique algebra morphism $\hat{\pi}$ from 
$T(H)/I = H_{par}$ to $B$ such that 
\[
\hat{\pi}([h]) = \varphi (h) = \pi(h).
\]

On the other hand, given the algebra map $\phi :H_{par} \rightarrow B$, the map $\pi_{\phi}: H\rightarrow B$ defined as
$\pi_{\phi} (h)=\phi ([h])$ is a partial representation of $H$ on $B$ and is factorized by $\phi$, by a direct
verification.
\end{proof}

The identity map is evidently a partial representation of $H$ on $H$, this induces an algebra morphism $u:
H_{par}\rightarrow H$ given by $u([h^1 ]\ldots [h^n ] )=h^1 \ldots h^n$.

\begin{exmp} For the case where the Hopf algebra $H$ is a group algebra $kG$, the partial ``Hopf'' algebra $(kG)_{par}$
coincide with the well known partial group algebra 
$k_{par} G$. For a finite group $G$, this algebra has a very rich structure, being the groupoid algebra $k\Gamma (G)$
over the groupoid
\[
\Gamma (G) =\{ (A,g)\in \mathcal{P}(G) \times G \, | \, e\in A , g^{-1}\in A \} ,
\]
where $\mathcal{P}(G)$ denotes the powerset of $G$ and the product of $\Gamma(G)$ is defined as
\[
(A,g)(B,h) =\left\{ \begin{array}{lc} (B,gh), & \mbox{ if } \; A=hB \\ 
\underline{\qquad} & \mbox{ otherwise.}
\end{array} \right.
\]
The source and target maps are respectively, $s(A,g)=(A,e)$ and $t(A,g)=(gA,e)$, and the inverse is given by
$(A,g)^{-1}
=(gA,g^{-1})$. If $|G|=n$ then the dimension of $k_{par}G$ is $d=2^{n-2}(n+1)$, and moreover, this algebra can be
expressed as direct sum of matrix algebras \cite{DEP}.
\end{exmp}

\begin{exmp} If the Hopf algebra $H$ is the universal enveloping algebra of a Lie algebra $\mathfrak{g}$, that is
$H=\mathcal{U}(\mathfrak{g})$, then every partial representation of $H$ is a morphism of algebras. Indeed, consider a
partial representation $\pi :\mathcal{U}(\mathfrak{g})\rightarrow B$, let us prove that for every pair of elements
$h,k\in \mathcal{U}(\mathfrak{g})$, we have $\pi (h) \pi (k) =\pi (hk)$. This can be done by induction, considering
$k=X_1 \ldots X_n$, with $X_i \in \mathfrak{g}$, for $i=1,\ldots ,n$.

Take 
$n=1$, that is 
$k=X\in \mathfrak{g}$, then we find
\begin{eqnarray*}
\pi (h) \pi (k_{(1)})\pi (S(k_{(2)})  & = & \pi (h) \pi (X)\pi (1) -\pi (h) \pi (1)\pi (X) \nonumber \\ 
& = & \pi (h) \pi (X) -\pi (h) \pi (X) =0 
\end{eqnarray*}
On the other hand
\begin{eqnarray*}
\pi (hk_{(1)}) \pi (S(k_{(2)}) & = & \pi (hX) \pi (1) -\pi (h) \pi (X) \nonumber \\
& = & \pi (hX)-\pi (h) \pi (X) \label{step1induction2}
\end{eqnarray*}
As $\pi $ is a partial representation, we find by (\repref{partialrep2}) that the two equations above are equal, hence
\[
\pi (hX)-\pi (h) \pi (X) =0 \qquad \Rightarrow \qquad \pi (hX)=\pi (h)\pi (X) .
\]

Suppose now that for $1\leq i <n$ we have
\[
\pi (h) \pi (X_1 \ldots X_i ) =\pi (hX_1 \ldots X_i )
\]
and take $k=X_1 \ldots X_n$, then
\begin{eqnarray*}
& \,  & \pi (h) \pi (k_{(1)} \pi (S(k_{(2)}))   =  \pi (h) \pi (k) \pi (1) +(-1)^{n} \pi (h) \pi (1) \pi (k) +\\
& \, & + \sum_{i=1}^{n-1} \sum_{\sigma \in S_{n,i}} (-1)^{n-i} \pi (h) \pi (X_{\sigma (1)} \ldots X_{\sigma (i)} ) \pi
(X_{\sigma (n)} \ldots X_{\sigma (i+1)} ) \\
& = & \pi (h) \pi (k) +(-1)^{n} \pi (h) \pi (k) +\\
& \, & + \sum_{i=1}^{n-1} \sum_{\sigma \in S_{n,i}} (-1)^{n-i} \pi (hX_{\sigma (1)} \ldots X_{\sigma (i)} ) \pi
(X_{\sigma (n)} \ldots X_{\sigma (i+1)} ) ,
\end{eqnarray*}
where the sums are carried on the $(n,i)$ shuffles, for $i=1,\ldots ,n-1$. On the other hand
\begin{eqnarray*}
& \, & \pi (hk_{(1)}) \pi (S(k_{(2)} )) =  \pi (hk)\pi (1) +(-1)^{n} \pi (h) \pi (k) +\\
& \, & + \sum_{i=1}^{n-1} \sum_{\sigma \in S_{n,i}} (-1)^{n-i} \pi (hX_{\sigma (1)} \ldots X_{\sigma (i)} ) \pi
(X_{\sigma (n)} \ldots X_{\sigma (i+1)} ) .
\end{eqnarray*}
As the two equations above are equal, then we have
\[
\pi (h) \pi (k) =\pi (hk)
\]
which concludes our proof that any partial representation of the universal enveloping algebra is a morphism of algebras.
Then, the map $[\underline{\;} ]:\mathcal{U}(\mathfrak{g})\rightarrow \mathcal{U}(\mathfrak{g})_{par}$ is a morphism of
algebras and it is easy to see that it is the inverse of the algebra map $u:\mathcal{U}(\mathfrak{g})_{par} \rightarrow
\mathcal{U}(\mathfrak{g})$ defined previously. Therefore $\mathcal{U}(\mathfrak{g})_{par}\cong
\mathcal{U}(\mathfrak{g})$.
\end{exmp}

\begin{exmp} Let $k$ be a field with characteristic not  equal to 2, consider the cyclic group $C_2 = \langle e,g \, |\,
g^2 =e \rangle$, let $H$ be the dual group algebra 
$(k C_2)^*$. This Hopf algebra is generated by the basis elements $p_e$ and $p_g$ satisfying $1=p_e + p_g$ and $p_e p_g
=0$. Let $x = [p_e]$, $y = [p_g]$ be the corresponding generators of $H_{par}$. The first defining equation for
$H_{par}$
yields  $y = 1-x$, in particular, we conclude that $xy=yx$ in $H_{par}$. 

Since $H$  is cocommutative the following 4 families of equations that define $H_{par}$ are reduced to the first two of
them. Writing the equations explicitly with respect to the basis elements one obtains eight equations;  
 using the fact that $y = 1-x$, all these  equations are reduced to 
\[
x(2x-1)(x-1)  = 0 
\]
which implies that  $H_{par}$ is isomorphic to the $3$-dimensional algebra 
\[
k[x]/\langle x(2x-1)(x-1) \rangle .
\] 
\end{exmp}

\begin{remark}\relabel{Hparfinite}
As we saw in the previous examples, if $H$ is a group algebra $kG$ where $G$ is a finite group, then the associated
universal partial ``Hopf'' algebra is again finite dimensional. Also, $H_{par}$ is finite dimensional for the dual
algebra of the group algebra $kC_2$. This should be no surprise, since in this particular case, the dual group algebra
is isomorphic to the group algebra. The question whether a finite dimensional Hopf algebra has a finite dimensional
universal partial ``Hopf'' algebra, will be given a negative answer in the next section.
\end{remark}

\subsection{$H_{par}$ as a partial smash product}

As we have seen, for partial representations of groups the partial group algebra $k_{par}G$ is isomorphic to a partial
skew group ring $A\rtimes_\alpha G$.  The same kind of result can be proved in the case of partial representations of
Hopf algebras. 

Consider the partial Hopf algebra $H_{par}$ associated to $H$. For each $h \in H$, define the elements  
\[
\varepsilon_h = [h_{(1)}][S(h_{(2)})] , \qquad \te_h =[S(h_{(1)})][h_{(2)}] .
\]

\begin{lemma}Let $H$ be a Hopf algebra with invertible antipode.
\begin{enumerate}
\item[(a)] $\varepsilon_k [h] = [h_{(2)}]\varepsilon_{S^{-1}(h_{(1)})k}$;
\item[(b)] $[h]\varepsilon_k = \varepsilon_{h_{(1)}k}[h_{(2)}]$;
\item[(c)] $\varepsilon_{h_{(1)}} \varepsilon_{h_{(2)}}= \varepsilon_h$;
\item[(d)] $\te_k [h] = [h_{(1)}] \te_{kh_{(2)}}$;
\item[(e)] $[h]\te_k = \te_{kS^{-1} (h_{(2)})} [h_{(1)}]$;
\item[(f)] $\te_{h_{(1)}} \te_{h_{(2)}} =\te_h$;
\item[(g)] $\te_h \varepsilon_k = \varepsilon_k \te_h$. 
\end{enumerate}
\end{lemma}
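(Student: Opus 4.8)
The plan is to prove each identity (a)--(g) by direct computation, unfolding the definitions $\varepsilon_h = [h_{(1)}][S(h_{(2)})]$ and $\te_h = [S(h_{(1)})][h_{(2)}]$ and repeatedly applying the defining relations of $H_{par}$ (equivalently, the partial representation axioms (\repref{partialrep1})--(\repref{partialrep5}) satisfied by $[\underline{\;}]$), together with the coassociativity of $\Delta$, the antipode axioms, and the fact that $S$ is an algebra anti-homomorphism. The computations are of exactly the same flavour as the verification of the commutation relation $[g]\varepsilon_h = \varepsilon_{gh}[g]$ in the group case, and as the manipulations in \exref{smash}, so the proof is a sequence of Sweedler-notation manipulations rather than anything conceptual.

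First I would establish the ``commutation'' relations (a), (b), (d), (e). For (b), I would compute
\[
[h]\varepsilon_k = [h][k_{(1)}][S(k_{(2)})],
\]
and the goal is to recognize the right-hand side $\varepsilon_{h_{(1)}k}[h_{(2)}] = [h_{(1)}k_{(1)}][S(h_{(2)}k_{(2)})][h_{(3)}]$. The bridge between the two is to insert a resolution of the identity using the key identity (\ref{key2}), namely $[h_{(1)}][S(h_{(2)})][h_{(3)}]=[h]$, which lets me ``split off'' an $[h]$ into the pattern $[h_{(1)}][S(h_{(2)})][h_{(3)}]$ and then use axioms (\repref{partialrep2})--(\repref{partialrep5}) to absorb the adjacent factors of $\varepsilon_k$; the antipode being an anti-homomorphism turns $S(h_{(2)}k_{(2)})$ into $S(k_{(2)})S(h_{(2)})$ which is what makes the bookkeeping close up. Relation (a) is the mirror image obtained by the same strategy but pushing $[h]$ to the left, which is where the invertibility of $S$ enters through the appearance of $S^{-1}(h_{(1)})$; concretely I would use the rewritten forms (PR3)/(PR4) from the remark after Definition~\ref{partialrep}. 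Relations (d) and (e) for $\te$ are completely analogous, with the roles of $h_{(1)}$ and $S(h_{(2)})$ interchanged, so I would prove them by the same template applied to $\te_k=[S(k_{(1)})][k_{(2)}]$.

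Next I would treat the ``idempotent-like'' comultiplicativity relations (c) $\varepsilon_{h_{(1)}}\varepsilon_{h_{(2)}}=\varepsilon_h$ and (f) $\te_{h_{(1)}}\te_{h_{(2)}}=\te_h$. For (c) I would write
\[
\varepsilon_{h_{(1)}}\varepsilon_{h_{(2)}} = [h_{(1)}][S(h_{(2)})][h_{(3)}][S(h_{(4)})],
\]
and collapse the inner block $[S(h_{(2)})][h_{(3)}]$ against its neighbours using axiom (\repref{partialrep5}) (with $k=S(h_{(4)})$) followed by the antipode axiom $h_{(2)}S(h_{(3)})=\epsilon(h_{(2)})1_H$, which contracts the middle and returns $[h_{(1)}][S(h_{(2)})]=\varepsilon_h$. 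Relation (f) is handled symmetrically using (\repref{partialrep2}) and the antipode axiom $S(h_{(1)})h_{(2)}=\epsilon(h_{(1)})1_H$.

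Finally, relation (g), the mutual commutativity $\te_h\varepsilon_k=\varepsilon_k\te_h$, I expect to be the main obstacle, since it mixes the two families and there is no single axiom that directly swaps an $\varepsilon$ past a $\te$. The strategy here is to reduce (g) to the already-proven relations: I would start from $\te_h\varepsilon_k=[S(h_{(1)})][h_{(2)}]\varepsilon_k$, apply (d) (in the form that moves $\varepsilon$, or equivalently (b) read appropriately) to commute the $\varepsilon_k$ leftward past the single letters $[h_{(2)}]$ and $[S(h_{(1)})]$ while tracking how its subscript twists, and similarly expand $\varepsilon_k\te_h$ by moving $\te_h$ past $\varepsilon_k$ using (a)/(e); then I would check that the two resulting expressions coincide after reindexing via coassociativity and the antipode relations. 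The delicate point is keeping the subscript bookkeeping consistent through the two opposite-direction commutations, and this is precisely where invertibility of $S$ is needed so that the $S^{-1}$-twists introduced by (a) and (e) cancel. If the direct subscript-chase proves too unwieldy, an alternative is to use relations (b) and (d) to rewrite both $[h]\varepsilon_k$ and $\te_h[k]$-type products and verify $g$ on the generators of the subalgebra generated by the $\varepsilon$'s and $\te$'s, but I expect the direct computation to succeed.
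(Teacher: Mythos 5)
Your plan matches the paper's proof essentially step for step: the paper establishes (a) and (b) by merging adjacent factors with the defining relations of $H_{par}$, expanding via the key identities $[u_{(1)}][S(u_{(2)})][u_{(3)}]=[u]$ and $[u_{(3)}][S^{-1}(u_{(2)})][u_{(1)}]=[u]$ (the latter being where invertibility of $S$ enters for (a)), and contracting with the antipode axiom; it gets (c) and (f) by the same collapse of the middle block; and it proves (g) exactly by opening $\te_h$ and applying (b) twice, which is the route you single out as ``(b) read appropriately'' (and which in fact requires no $S^{-1}$-twist bookkeeping at all). The only caveat is that yours is a plan rather than a carried-out Sweedler computation, but every tool you name is the one the paper actually uses.
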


\begin{proof} For the item (a), we have
\begin{eqnarray*}
\ve_k [h] & = & [k_{(1)}][S(k_{(2)})][h]=[k_{(1)}][S(k_{(2)})h]\\
& = &[k_{(1)}][S(k_{(2)})h_{(3)}][S^{-1}(S(k_{(3)})h_{(2)})][S(k_{(4)})h_{(1)}]\\
& = & [k_{(1)}S(k_{(2)})h_{(3)}][S^{-1}(S(k_{(3)})h_{(2)})][S(k_{(4)})h_{(1)}]\\
& = & [h_{(3)}][S^{-1}(S(k_{(1)})h_{(2)})][S(k_{(2)})h_{(1)}]\\
& = & [h_{(3)}][S^{-1}(h_{(2)})k_{(1)}][S(S^{-1}(h_{(1)})k_{(2)})]\\
& = & [h_{(2)}]\ve_{S^{-1} (h_{(1)})k}
\end{eqnarray*}

For the item (b) we have
\begin{eqnarray*}
[h]\ve_k & = & [h][k_{(1)}][S(k_{(2)})]=[hk_{(1)}][S(k_{(2)})]\\
& = & [h_{(1)}k_{(1)}][S(h_{(2)}k_{(2)})][h_{(3)}k_{(3)}][S(k_{(4)})]\\
& = & [h_{(1)}k_{(1)}][S(h_{(2)}k_{(2)})][h_{(3)}k_{(3)}S(k_{(4)})]\\
& = & [h_{(1)}k_{(1)}][S(h_{(2)}k_{(2)})][h_{(3)}]\\
& = & \ve_{h_{(1)}k}[h_{(2)}]
\end{eqnarray*}

For the item (c),
\[
\ve_h =  [h_{(1)}][S(h_{(2)})] =[h_{(1)}][S(h_{(2)})][h_{(3)}][S(h_{(4)})] =\ve_{h_{(1)}} \ve_{h_{(2)}} .
\]

The items (d) and (e) are analogous to items (a) and (b) and the item (f) is analogous to the item (c). The item (g) follow by opening $\te_h$ and applying twice the item (b).
\end{proof}

Define the subalgebras
\[
A=\langle \ve_h \in H_{par} \, | \, h\in H \rangle , \quad \mbox{ and } \quad \tilde{A}=\langle \te_h \in H_{par} \, | \, h\in H \rangle .
\]
By the previous lemma, one concludes that these two subalgebras of $H_{par}$ commute one with respect to the other. 

\begin{thm}\label{thm.isomorphism.Hpar.smashproduct}\thlabel{Hparisosmash}
Let $H$ be a Hopf algebra. There exists a partial action of $H$ on the subalgebra $A\subseteq H_{par}$, such that
$H_{par} \simeq \underline{A\# H}$. 
\end{thm}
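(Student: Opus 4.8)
The plan is to mimic the strategy used for the group case (\thref{universal} and the isomorphism $k_{par}G\cong A\rtimes_\alpha G$), but now using the Hopf-algebraic partial smash product and its universal property proved above. First I would make the subalgebra $A=\langle\ve_h\mid h\in H\rangle$ into a partial $H$-module algebra. The natural guess for the action, motivated by the group formula $\alpha_g(\ve_{g^{-1}}\cdots)=\ve_g\ve_{gh}\cdots$ and by item (b) of the preceding lemma, is to set $h\cdot a=[h_{(1)}]\,a\,[S(h_{(2)})]$ for $a\in A$; one checks this lands in $A$ because $[h_{(1)}]\ve_k[S(h_{(2)})]=\ve_{h_{(1)}k}[h_{(2)}][S(h_{(3)})]=\ve_{h_{(1)}k}\ve_{h_{(2)}}$ using (b) and (c), so $h\cdot\ve_k$ is again a product of $\ve$'s. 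I would then verify axioms (PA\ref{partact1})--(PA\ref{partact3b}): (PA\ref{partact1}) is immediate from $[1_H]=1$; the multiplicativity-type axiom (PA\ref{partact2}) and the twisted axioms (PA\ref{partact3}), (PA\ref{partact3b}) follow from the partial-representation relations (PR1)--(PR5) satisfied by $[\,\underline{\ }\,]$, together with the key identities \equref{key1}--\equref{key2}; in particular $h\cdot\um=h\cdot 1_A=\ve_h$, so the unit element $\um$ of $A$ is $[1_H]=1_{H_{par}}$ and $h\cdot\um=\ve_h$ is the expected central idempotent.

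Once the symmetric partial action is in place, I would construct the isomorphism $H_{par}\cong\underline{A\#H}$ exactly as in the group case, by playing the two universal properties against each other. In one direction, \exref{smash} gives a partial representation $\pi_0:H\to\underline{A\#H}$, $\pi_0(h)=\um\#h$; by \thref{univHpar} this induces a unique algebra morphism $\hat\pi_0:H_{par}\to\underline{A\#H}$ with $\hat\pi_0([h])=\um\#h$, whose explicit form on words should be $\hat\pi_0([h^1]\cdots[h^n])=(h^1_{(1)}\cdots)\cdots\#h^1_{(n)}\cdots h^n$, computed by iterating Lemma~\ref{propertiesofsmash}(i). In the other direction, I would exhibit a covariant pair for the partial action of $H$ on $A$ with target $H_{par}$: namely the inclusion $\phi:A\hookrightarrow H_{par}$ and the canonical partial representation $\pi=[\,\underline{\ }\,]:H\to H_{par}$. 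Checking (CP1) reduces to $[h_{(1)}]\,\ve_k\,[S(h_{(2)})]=[h_{(1)}\cdot\ve_k]$, which is precisely the definition of the action, and (CP2) is the commutativity statement that $\phi(a)$ commutes with $\te$-type expressions $[S(h_{(1)})][h_{(2)}]$, which I would deduce from Lemma items (d)--(g) showing $A$ and $\tilde A$ commute. The partial smash product's universal property then yields a unique algebra morphism $\Phi:\underline{A\#H}\to H_{par}$ with $\Phi(a\#h)=a\,[h]$.

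Finally I would show $\hat\pi_0$ and $\Phi$ are mutually inverse. On generators this is routine: $\Phi(\hat\pi_0([h]))=\Phi(\um\#h)=[1_H][h]=[h]$, and conversely $\hat\pi_0(\Phi(a\#h))=\hat\pi_0(a)\,(\um\#h)=a\,(\um\#h)=a\#h$, where the middle step uses that $\hat\pi_0$ restricted to $A$ is the identity-on-$\ve$'s inclusion back into the smash product (since $\hat\pi_0(\ve_k)=\hat\pi_0([k_{(1)}][S(k_{(2)})])=(\um\#k_{(1)})(\um\#S(k_{(2)}))=(k\cdot\um)\#1_H=\ve_k\#1_H$, which is the image of $\ve_k\in A$ under $\phi_0$). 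Because both maps are algebra morphisms agreeing on algebra generators, they are inverse to each other.

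The main obstacle I expect is the verification that the proposed formula $h\cdot a=[h_{(1)}]a[S(h_{(2)})]$ genuinely defines a \emph{symmetric} partial action stabilizing $A$ and satisfying all four axioms; the twisted associativity axioms (PA\ref{partact3}) and especially the symmetry axiom (PA\ref{partact3b}) require careful bookkeeping with the relations (PR1)--(PR5) and the key identities, and it is here that invertibility of the antipode (assumed in the preceding lemma) and the full strength of the five partial-representation axioms are likely to be used. Everything after that—the two universal-property arguments and the check that the induced maps are mutually inverse—follows the group-case template almost verbatim and should be straightforward.
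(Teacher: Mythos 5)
Your proposal is correct and follows essentially the same route as the paper's proof: the same action $h\cdot a=[h_{(1)}]a[S(h_{(2)})]$ shown to stabilize $A$ via the commutation relations, the same verification of the (symmetric) partial action axioms, and the same interplay of the two universal properties via the covariant pair $(i,[\,\underline{\ }\,])$ to produce the mutually inverse maps $\hat\pi$ and $\Phi$. The only cosmetic difference is that you spell out the mutual-inverse check on generators, which the paper leaves as ``by construction.''
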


\begin{proof} Define, for each $h\in H$ and for each $a=\varepsilon_{h^1}\ldots \varepsilon_{h^n}\in A$ the element
\[
h\cdot a=[h_{(1)}] a[S(h_{(2)}] \in H_{par} .
\]
This is an element of $A$. Indeed,
\beqnast
h\cdot a & = & [h_{(1)}] \varepsilon_{k^1}\ldots \varepsilon_{k^n}[S(h_{(2)})] \\
& = & \varepsilon_{h_{(1)}k^1}[h_{(2)}]\varepsilon_{k^2}\ldots \varepsilon_{k^n} 
[S(h_{(3)})]\\
& = & \varepsilon_{h_{(1)}k^1}\ldots \varepsilon_{h_{(n)}k^n}[h_{(n+1)}] 
[S(h_{(n+2)})]\\
& = & \varepsilon_{h_{(1)}k^1}\ldots \varepsilon_{h_{(n)}k^n}\varepsilon_{h_{(n+1)}} \in A .
\eqnast

{\it Claim:} The linear map
\[
\begin{array}{rccc} \cdot: & H\otimes A & \rightarrow & A\\
\,  & h\otimes a & \mapsto & h\cdot a \end{array}
\]
for $h\cdot a$ as above described, defines a partial action of $H$ on $A$:

First, if $h=1_H$, then
\[
1_H \cdot a =[1_H ] a [S(1_H )]=[1_H ]a [1_H ]=1_{H_{par}} a 1_{H_{par}}=a\, , \quad \forall a\in A. 
\]

Consider $h\in H$, and two elements $a,b\in A$, then
\beqnast
h\cdot (ab) & = & [h_{(1)}] ab [S(h_{(2)})] \\
& = & [h_{(1)}][S(h_{(2)})][h_{(3)}]ab[S(h_{(4)})] \\
& = & [h_{(1)}]\te_{h_{(2)}} ab [S(h_{(3)})]\\
& = & [h_{(1)}]a\te_{h_{(2)}} b [S(h_{(3)})]\\
& = & [h_{(1)}]a[S(h_{(2)})][h_{(3)}]b[S(h_{(4)})] \\
& = & (h_{(1)}\cdot a)(h_{(2)}\cdot b) .
\eqnast
Note that the commutation in the fourth equality follows from the fact that $a\in A$ and then commutes with every element of $\tilde{A}$.

Finally, take $h,k\in H$ and $a\in A$, the composition reads
\beqnast
h\cdot (k\cdot a) & = & [h_{(1)}][k_{(1)}]a[S(k_{(2)})][S(h_{(2)})] \\
& = & [h_{(1)}][k_{(1)}][S(k_{(2)})][k_{(3)}]a[S(k_{(4)})][S(h_{(2)})]\\
& = & [h_{(1)}][k_{(1)}]\te_{k_{(2)}}a[S(k_{(3)})][S(h_{(2)})]\\
& = & [h_{(1)}][k_{(1)}] a\te_{k_{(2)}}[S(k_{(3)})][S(h_{(2)})]\\
& = & [h_{(1)}][k_{(1)}]a[S(k_{(2)})][k_{(3)}][S(k_{(4)})][S(h_{(2)})]\\
& = & [h_{(1)}][k_{(1)}]a[S(k_{(2)})][k_{(3)}][S(k_{(4)})S(h_{(2)})]\\
& = & [h_{(1)}][k_{(1)}] a\te_{k_{(2)}}[S(k_{(3)})S(h_{(2)})]\\
& = & [h_{(1)}][k_{(1)}] \te_{k_{(2)}}a[S(k_{(3)})S(h_{(2)})]\\
& = & [h_{(1)}][k_{(1)}] a[S(k_{(2)})S(h_{(2)})]\\
& = & [h_{(1)}][S(h_{(2)})][h_{(3)}][k_{(1)}] a[S(k_{(2)})S(h_{(4)})]\\
& = & [h_{(1)}][S(h_{(2)})][h_{(3)}k_{(1)}] a[S(k_{(2)})S(h_{(4)})]\\
& = & (h_{(1)}\cdot \um )(h_{(2)}k\cdot a).
\eqnast
Similarly, one can prove also that $h\cdot (k\cdot a)=(h_{(1)}k\cdot a)(h_{(2)}\cdot \um )$, therefore the partial action is symmetric.

Now, using the universal properties of $H_{par}$ and of $\underline{A\# H}$, we will have the desired isomorphism. First, we have already proved that the linear map $\pi_0 :H\rightarrow \underline{A\# H}$ given by $\pi_0 (h)=\um \# h$ is a partial representation of the Hopf algebra $H$. Then, there is an algebra morphism $\hat{\pi}:H_{par}\rightarrow \underline{A\# H}$ such that $\pi_0 =\hat{\pi}\circ [\underline{\; }]$. In order to write this morphism explicitly, note that 
$x=[h^1]\ldots [h^n]\in H_{par}$ can be written as 
\[
x=\varepsilon_{h^1_{(1)}}\varepsilon_{h^1_{(2)}h^2_{(1)}}\ldots 
\varepsilon_{h^1_{(n-1)}h^2_{(n-2)}\ldots h^{n-1}_{(1)}}[h^1_{(n)}h^2_{(n-1)}\ldots h^{n-1}_{(2)}h^n] .
\]

Therefore, we have explicitly
\[
\hat{\pi} (x)=\varepsilon_{h^1_{(1)}}\varepsilon_{h^1_{(2)}h^2_{(1)}}\ldots 
\varepsilon_{h^1_{(n-1)}h^2_{(n-2)}\ldots h^{n-1}_{(1)}}\# h^1_{(n)}h^2_{(n-1)}\ldots h^{n-1}_{(2)}h^n .
\]

On the other hand, the inclusion map $i:A\rightarrow H_{par}$ and the canonical partial representation $[\underline{\; }]:H\rightarrow H_{par}$ form a covariant pair, just because
\[
h\cdot a=[h_{(1)}]a[S(h_{(2)})] .
\]
and every $a\in A$ commutes with every $\te_h \in H_{par}$. Then, we have an algebra morphism $\Phi :\underline{A\# H}\rightarrow H_{par}$ given by
$\Phi (a\# h)=a[h]$. By construction, $\Phi$ is the inverse map of $\hat{\pi}$, therefore, these two algebras are isomorphic.
\end{proof}

\subsection{$H_{par}$ as a Hopf algebroid}

Given a Hopf algebra $H$ with invertible antipode, one can prove yet that the partial ``Hopf'' algebra is in fact a Hopf algebroid. 

\begin{defi} \cite{B} Given a $k$ algebra $A$, a left (resp. right) bialgebroid over $A$ is given by the data $(\mathcal{H}, A, s,t,\ud_l , \ue_l )$ (resp. $(\mathcal{H}, A, \tilde{s}, \tilde{t}, \ud_r , \ue_r )$) such that:
\begin{enumerate}
\item $\mathcal{H}$ is a $k$ algebra.
\item The map $s$ (resp. $\tilde{s}$) is a morphism of algebras between $A$ and $\mathcal{H}$, and the map $t$ (resp. $\tilde{t}$) is an anti-morphism of algebras between $A$ and $\mathcal{H}$. Their images commute, that is, for every $a,b\in A$ we have $s(a)t(b)=t(b)s(a)$ (resp. $\tilde{s}(a) \tilde{t}(b)=\tilde{t}(b)\tilde{s}(a)$).
By the maps $s,t$ (resp. $\tilde{s} , \tilde{t}$) the algebra $\mathcal{H}$ inherits a structure of $A$ bimodule given by $a\triangleright h \triangleleft b =s(a)t(b)h$ (resp. $a\triangleright h \triangleleft b =h\tilde{s}(b)\tilde{t}(a)$).
\item The triple $(\mathcal{H},\ud_l , \ue_l )$ (resp. $(\mathcal{H}, \ud_r , \ue_r )$) is an $A$ coring relative to the structure of $A$ bimodule defined by $s$ and $t$ (resp. $\tilde{s}$, and $\tilde{t}$).
\item The image of $\ud_l$ (resp. $\ud_r $) lies on the Takeuchi subalgebra
\[
\mathcal{H}\times_A \mathcal{H} =\{ \sum_i h_i \otimes k_i \in \mathcal{H}\otimes_A \mathcal{H} \, |\, \sum_i h_i t(a) \otimes k_i =\sum_i h_i \otimes k_i s(a) \, \; \forall a\in A \} ,
\]
resp.
\[
\mathcal{H} {_A\times} \mathcal{H} =\{ \sum_i h_i \otimes k_i \in \mathcal{H}\otimes_A \mathcal{H} \, |\, \sum_i
\tilde{s} (a) h_i  \otimes k_i =\sum_i h_i \otimes \tilde{t} (a) k_i \, \; \forall a\in A \} ,
\]
and it is an algebra morphism.
\item For every $h,k\in \mathcal{H}$, we have
\[
\ue_l (hk)=\ue_l (hs(\ue_l (k)))=\ue_l (ht(\ue_l (k))) ,
\]
resp.
\[
\ue_r (hk) =\ue_r (\tilde{s}(\ue_r (h))k)=\ue_r (\tilde{t} (\ue_r (h))k) .
\]
\end{enumerate}

Given two anti-isomorphic algebras $A$ and $\tilde{A}$ (ie, $A\cong \tilde{A}^{op}$), a left $A$ bialgebroid $(\mathcal{H}, A, s,t,\ud_l , \ue_l )$ and a right $\tilde{A}$ bialgebroid $(\mathcal{H}, \tilde{A}, \tilde{s}, \tilde{t}, \ud_r , \ue_r )$, a Hopf algebroid structure on $\mathcal{H}$ is given by an antipode, that is, an anti algebra homomorphism $\mathcal{S}:\mathcal{H}\rightarrow \mathcal{H}$ such that
\begin{enumerate}
\item[(i)] $s\circ \ue_l \circ \tilde{t} =\tilde{t}$, $t\circ \ue_l \circ \tilde{s} =\tilde{s}$, $\tilde{s}\circ \ue_r \circ t =t$ and $\tilde{t}\circ \ue_r \circ s =s$.
\item[(ii)] $(\ud_l \otimes_{\tilde A} I)\circ \ud_r =(I\otimes_A \ud_r )\circ \ud_l$ and
 $\quad (I\otimes_{\tilde A} \ud_l )\circ \ud_r =(\ud_r \otimes_A I )\circ \ud_l$. 
\item[(iii)] $\mathcal{S}(t(a)h\tilde{t}(b'))=\tilde{s}(b')\mathcal{S}(h) s(a)$,
for all $a\in A$, $b'\in \tilde{A}$ and $h\in \mathcal{H}$.
\item[(iv)] $\mu_{\mathcal{H}} \circ (\mathcal{S} \otimes I)\circ \ud_l =\tilde{s} \circ \ue_r$ and $\mu_{\mathcal{H}} \circ (I\otimes S)\circ \ud_r =s\circ \ue_l$.
\end{enumerate} 
\end{defi}

With this definition at hand let us show that the universal partial ``Hopf'' algebra $H_{par}$ is indeed a Hopf
algebroid. First, note that the antipode and its inverse in the Hopf algebra $H$, $S^{\pm 1}:H\rightarrow H$ can define
an anti-algebra maps $\mathcal{S}: H_{par}\rightarrow H_{par}$ and
$\mathcal{S}':H_{par}\rightarrow H_{par}$ given, respectively by
\beqnast
\mathcal{S}([h^1]\ldots [h^n ]) & = & [S(h^n)]\ldots [S(h^1)], \\  
\mathcal{S}'([h^1]\ldots [h^n ]) & = &[S^{-1}(h^n)]\ldots [S^{-1}(h^1)] .
\eqnast

The map $\mathcal{S}'$ (resp. $\mathcal{S}$) provides two pairs of anti-algebra isomorphisms between the subalgebras $A$ and $\tilde{A}$ of $H_{par}$ with inverse  $\mathcal{S}$ (resp. $\mathcal{S}'$), therefore we can define two kinds of source
and target maps: Relative to the algebra $A$, the maps $s,t:A\rightarrow H_{par}$ given by $s(a)=a$ and $t(a)=\mathcal{S}' (a)$,for $a\in A$, and relative to the algebra $\tilde{A}$, the maps $\tilde{s}, \tilde{t}:
\tilde{A}\rightarrow H_{par}$ given by $\tilde{s}(a')=a'$ and 
$\tilde{t}(a')=\mathcal{S}' (a' )$, for $a' \in \tilde{A}$. 

Next, we are going to provide two bialgebroid structures on $H_{par}$, the left one, relative to the base algebra $A$,
and the right one, relative to the base algebra $\tilde{A}$. For the left part, we define the $A$ bimodule structure on
$H_{par}$ by 
\[
a\triangleright x \triangleleft b =s(a)t(b) x
\]
Of course, the maps $s$ and $t$ commute among themselves because $s$ is basically the inclusion of $A$ and $t$ is the inclusion of $\tilde{A}$, and we have already proved that these two subalgebras commute.

The comultiplication and the counit are given by
\begin{eqnarray*}
\ud_l ([h^1]\ldots [h^n]) & = & [h^1_{(1)}]\ldots [h^n_{(1)}]\otimes_A [h^1_{(2)}]\ldots [h^n_{(2)}] \\
\ue_l ([h^1]\ldots [h^n]) & = & \ve_{h^1_{(1)}} \ve_{h^1_{(2)}h^2_{(1)}} \ldots \ve_{h^1_{(n)} \ldots h^n}
\end{eqnarray*}

For the right part, we define the $\tilde{A}$ bimodule structure on $H_{par}$ by 
\[
a\triangleright x \triangleleft b =x \tilde{s}(b)\tilde{t}(a) 
\]
Again, the maps $\tilde{s}$ and $\tilde{t}$ commute.

The comultiplication and the counit are given by
\begin{eqnarray*}
\ud_r ([h^1]\ldots [h^n]) & = & [h^1_{(1)}]\ldots [h^n_{(1)}]\otimes_{\tilde A} [h^1_{(2)}]\ldots [h^n_{(2)}] \\
\ue_r ([h^1]\ldots [h^n]) & = & \te_{h^1 h^2_{(1)}\ldots h^n_{(1)}} \te_{h^2_{(2)} \ldots h^n_{(2)}} \ldots
\te_{h^n_{(n)}}
\end{eqnarray*}

And the antipode, which interchanges the left and the right bialgebroid structures is given by the map $\mathcal{S}$.

Then we have the following result: 

\begin{thm} 
Let $H$ be a Hopf algebra with invertible antipode. Then, the data $(H_{par}, A, \tilde{A}, s,t,\tilde{s}, \tilde{t},
\ud_l, \ud_r, \ue_l ,\ue_r , \mathcal{S} )$ define a Hopf algebroid structure on $H_{par}$.
\end{thm}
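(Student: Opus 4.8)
The plan is to verify the three packages of axioms in turn: the left $A$-bialgebroid $(H_{par},A,s,t,\ud_l,\ue_l)$, the right $\tilde A$-bialgebroid $(H_{par},\tilde A,\tilde s,\tilde t,\ud_r,\ue_r)$, and the four antipode conditions (i)--(iv). Before starting I would record the underlying symmetry. By the commutation relations of the preceding Lemma (items (c), (f), (g)) the subalgebras $A$ and $\tilde A$ are commutative, commute with each other, and $\mathcal{S},\mathcal{S}'$ restrict to mutually inverse anti-algebra isomorphisms $A\leftrightarrow\tilde A$, so that $A\cong\tilde A^{op}$. This simultaneously validates $s,t,\tilde s,\tilde t$ (each of $s,\tilde s$ an inclusion, each of $t,\tilde t$ an anti-morphism whose image lies in the commuting partner) and exhibits the involutive symmetry $\mathcal{S}\leftrightarrow\mathcal{S}'$, $A\leftrightarrow\tilde A$, left $\leftrightarrow$ right. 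I would therefore carry out the left bialgebroid case in detail and deduce the right one from this symmetry.

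For the comultiplication I would avoid checking the defining relations of $H_{par}$ directly and instead invoke the universal property of \thref{univHpar}. First I would show that the elements $[h_{(1)}]\otimes_A[h_{(2)}]$ satisfy the Takeuchi condition $\sum[h_{(1)}]t(a)\otimes_A[h_{(2)}]=\sum[h_{(1)}]\otimes_A[h_{(2)}]s(a)$ for $a\in A$, reducing this to the commutation relations of the Lemma; this makes $H_{par}\times_A H_{par}$ a genuine $k$-algebra containing the image. Then the map $h\mapsto[h_{(1)}]\otimes_A[h_{(2)}]$ is a partial representation of $H$ into $H_{par}\times_A H_{par}$ — each of (PR1)--(PR5) follows by applying the corresponding axiom for $[\underline{\;}]$ in each tensor leg, using coassociativity of $\Delta$ — so it factors through a unique algebra morphism $\ud_l$ with the stated word formula. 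Coassociativity then holds because both composites are algebra morphisms agreeing on the generators $[h]$. To make $\ue_l$ well defined I would transport it through the isomorphism $H_{par}\cong\underline{A\# H}$ of \thref{Hparisosmash}: there $\ue_l$ becomes the restriction of $\mathrm{id}_A\otimes\epsilon$, namely $a\# h\mapsto a(h\cdot\um)=a\ve_h$, which is patently well defined; one then checks $(\ue_l\otimes\mathrm{id})\ud_l=\mathrm{id}=(\mathrm{id}\otimes\ue_l)\ud_l$ and the counit axiom $\ue_l(hk)=\ue_l(hs(\ue_l(k)))=\ue_l(ht(\ue_l(k)))$ on words, using $\ue_l|_A=\mathrm{id}_A$.

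Finally I would treat the antipode $\mathcal{S}$. Conditions (i) and (iii) are direct from the definitions of the structure maps together with the facts $\ue_l|_A=\mathrm{id}_A$, $\ue_r|_{\tilde A}=\mathrm{id}_{\tilde A}$ and $\mathcal{S}\mathcal{S}'=\mathrm{id}$. Condition (ii) reduces to the generators $[h]\mapsto[h_{(1)}]\otimes[h_{(2)}]\otimes[h_{(3)}]$, since every map occurring there is an algebra morphism. Condition (iv) I would first verify on generators, where it is immediate: for instance $\mu\circ(\mathcal{S}\otimes I)\circ\ud_l([h])=[S(h_{(1)})][h_{(2)}]=\te_h=\tilde s(\ue_r([h]))$, and dually $\mu\circ(I\otimes\mathcal{S})\circ\ud_r([h])=\ve_h=s(\ue_l([h]))$, using the definitions of $\ve_h,\te_h$ and the identities \eqref{key1}--\eqref{key4} of the earlier Proposition.

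The main obstacle is that the composites in (iv) are \emph{not} algebra maps, since $\mathcal{S}$ reverses multiplication while $\ud_l,\ud_r$ preserve it; hence the generator-level check does not extend automatically to all of $H_{par}$. I would overcome this by an induction on the word length of $[h^1]\cdots[h^n]$, pushing $\mathcal{S}$ past one generator at a time and collapsing the resulting factors $\ve$ and $\te$ by means of the commutation relations (a)--(g) of the Lemma together with the explicit word formulas for $\ud_l,\ud_r,\ue_l,\ue_r$. This word-level bookkeeping — the same that governs the counit axiom and the Takeuchi condition — is the genuinely laborious part of the verification, whereas every check on the generators $[h]$ is short.
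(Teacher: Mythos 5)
Your overall strategy coincides with the paper's: construct $\ud_l$ by showing that $h\mapsto[h_{(1)}]\otimes_A[h_{(2)}]$ lands in the Takeuchi product and is a partial representation, then invoke \thref{univHpar}; get coassociativity on generators; define $\ue_l$ through the normal form coming from $H_{par}\cong\underline{A\# H}$; obtain the right bialgebroid structure from the $A\cong\tilde A^{op}$ symmetry; and verify the antipode identity (iv) by collapsing words one generator at a time, precisely because the composites appearing there are not algebra maps. Your explicit remarks on the well-definedness of $\ue_l$ and on why (iv) cannot be checked on generators alone are correct and, if anything, more careful than the paper, which performs the same word-level computation without comment.

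There is, however, one step whose justification as written would fail. You claim that (PR1)--(PR5) for the map $h\mapsto[h_{(1)}]\otimes_A[h_{(2)}]$ ``follow by applying the corresponding axiom for $[\underline{\;}]$ in each tensor leg, using coassociativity of $\Delta$.'' They do not: for (PR2), the product $\ud_l([h])\,\ud_l([k_{(1)}])\,\ud_l([S(k_{(2)})])$ equals $[h_{(1)}][k_{(1)}][S(k_{(4)})]\otimes_A[h_{(2)}][k_{(2)}][S(k_{(3)})]$, and in the left leg the indices $k_{(1)}$ and $k_{(4)}$ are not adjacent Sweedler partners, so the relation $[h][k_{(1)}][S(k_{(2)})]=[hk_{(1)}][S(k_{(2)})]$ does not apply there directly; the same obstruction occurs for (PR3)--(PR5). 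The actual argument must first collapse the right leg to extract $s(\ve_{k_{(2)}})$, transport it across $\otimes_A$ as $t(\ve_{k_{(2)}})=\te_{S^{-1}(k_{(2)})}$ acting on the left leg via the bimodule balancing, collapse there, and then reverse the procedure -- this is exactly the displayed computation in the paper's proof and is the technical heart of the well-definedness of $\ud_l$. The Takeuchi condition you verify beforehand supplies precisely the tool needed, so the gap is reparable, but the leg-by-leg argument as stated is not a proof of (PR2)--(PR5).
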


\begin{proof} Let us first construct the comultiplication $\ud_l$. Consider the
linear map
\[
\pi \circ [\underline{\; }]\otimes [\underline{\; }]\circ \Delta :H\rightarrow H_{par}\otimes_A H_{par},
\]
where $\pi : H_{par}\otimes H_{par} \rightarrow H_{par}\otimes_A H_{par}$ is the canonical projection. 
Our aim is to show that the image of this map lies in the Takeuchi product $H_{par}\times_A H_{par}$. As this Takeuchi
product is an algebra, it suffices to verify that the map $H\to H_{par}\times_A H_{par}$ is a partial representation in
order to obtain the needed algebra map $\ud_l:H_{par}\to H_{par}\times_A H_{par}$ by the universal property of
$H_{par}$.

So first, we need to check that $\mbox{Im}(\ud_l )$ is in the Takeuchi subalgebra defined as
\[
H_{par} \times_A H_{par} =\{ \sum_i x^i \otimes_A y^i \in H_{par} \otimes_A H_{par} 
\, | \, \sum_i x^i t(a) \otimes_A y^i =\sum_i x^i \otimes_A y^i s(a)\; ,\forall a\in A \} .
\]
Indeed, take $h\in H$ and $\ve_k \in A$,
\beqnast
[h_{(1)}]\otimes_A [h_{(2)}]s(\ve_k ) & = & [h_{(1)}]\otimes_A [h_{(2)}][k_{(1)}][S(k_{(2)})] 
 =  [h_{(1)}]\otimes_A [h_{(2)}k_{(1)}][S(k_{(2)})] \\
&&\hspace{-2cm} =  [h_{(1)}]\otimes_A \ve_{h_{(2)}k_{(1)}}[h_{(3)}k_{(2)}][S(k_{(3)})] 
 =  [h_{(1)}]\otimes_A \ve_{h_{(2)}k_{(1)}}[h_{(3)}k_{(2)}S(k_{(3)})] \\
&&\hspace{-2cm} =  [h_{(1)}]\otimes_A \ve_{h_{(2)}k}[h_{(3)}] 
 =  \te_{S^{-1}(h_{(2)}k)}[h_{(1)}]\otimes_A [h_{(3)}] \\
&&\hspace{-2cm} =  [h_{(3)}k_{(2)}][S^{-1}(h_{(2)}k_{(1)})][h_{(1)}]\otimes_A [h_{(4)}] \\
&&\hspace{-2cm} =  [h_{(3)}k_{(2)}][S^{-1}(k_{(1)})S^{-1}(h_{(2)})h_{(1)}]\otimes_A [h_{(4)}] \\
&&\hspace{-2cm} =  [h_{(1)}k_{(2)}][S^{-1}(k_{(1)})]\otimes_A [h_{(2)}] 
 =  [h_{(1)}][k_{(2)}][S^{-1}(k_{(1)})]\otimes_A [h_{(2)}] \\
&&\hspace{-2cm} =  [h_{(1)}]\te_{S^{-1}(k)}\otimes_A [h_{(2)}] 
 =  [h_{(1)}]\te_{S^{-1}(k)}\otimes_A [h_{(2)}] \\
&&\hspace{-2cm} =  [h_{(1)}]t(\ve_{k} )\otimes_A [h_{(2)}] .
\eqnast

The long list of checks that $\ud_l$ satisfies the axioms of a partial representation follows the same kind of
reasoning, for example, for (PR2) we have
\beqnast
\ud_l ([h][k_{(1)}][S(k_{(2)})]) & = & [h_{(1)}][k_{(1)}][S(k_{(4)})]\ot_A [h_{(2)}][k_{(2)}][S(k_{(3)})] \\
&&\hspace{-2cm} =  [h_{(1)}][k_{(1)}][S(k_{(3)})]\ot_A [h_{(2)}]s(\ve_{k_{(2)}}) 
 =  [h_{(1)}]t(\ve_{k_{(2)}})[k_{(1)}][S(k_{(3)})]\ot_A [h_{(2)}] \\
&&\hspace{-2cm} =  [h_{(1)}]\te_{S^{-1}(k_{(2)})}[k_{(1)}][S(k_{(3)})]\ot_A [h_{(2)}] \\
&&\hspace{-2cm} =  [h_{(1)}][k_{(3)}][S^{-1}(k_{(2)})][k_{(1)}][S(k_{(4)})]\ot_A [h_{(2)}] \\
&&\hspace{-2cm} = [h_{(1)}][k_{(1)}][S(k_{(2)})]\ot_A [h_{(2)}] 
 =  [h_{(1)}k_{(1)}][S(k_{(2)})]\ot_A [h_{(2)}] \\
&&\hspace{-2cm} =  \te_{S^{-1}(h_{(2)}k_{(2)})}[h_{(1)}k_{(1)}][S(k_{(3)})]\ot_A [h_{(3)}] 
 =  [h_{(1)}k_{(1)}][S(k_{(3)})]\ot_A \ve_{h_{(2)}k_{(2)}}[h_{(3)}] \\
&&\hspace{-2cm} =  [h_{(1)}k_{(1)}][S(k_{(4)})]\ot_A [h_{(2)}k_{(2)}][S(h_{(3)}k_{(3)})][h_{(4)}] \\
&&\hspace{-2cm} =  [h_{(1)}k_{(1)}][S(k_{(4)})]\ot_A [h_{(2)}k_{(2)}][S(k_{(3)})S(h_{(3)})h_{(4)}] \\
&&\hspace{-2cm} =  [h_{(1)}k_{(1)}][S(k_{(4)})]\ot_A [h_{(2)}k_{(2)}][S(k_{(3)})] 
 =  \ud_l ([hk_{(1)}][S(k_{(4)})])
\eqnast
Therefore, the comultiplication is well defined on $H_{par}$.

Next, we have to
prove that $\ud_l$ is a morphism of $A$ bimodules. Let $h\in H$ and $\ve_k \in A$, then, we have
\beqnast
\ud_l (\ve_k \triangleright [h] ) & = & \ud_l ([k_{(1)}][S(k_{(2)})][h] ) 
 =  [k_{(1)}][S(k_{(4)})][h_{(1)}]\ot_A [k_{(2)}][S(k_{(3)})][h_{(2)}]\\
& = & [k_{(1)}][S(k_{(3)})][h_{(1)}]\ot_A \ve_{k_{(2)}} \triangleright [h_{(2)}]
 =  [k_{(1)}][S(k_{(3)})][h_{(1)}] \triangleleft \ve_{k_{(2)}} \ot_A [h_{(2)}]\\
& = & \te_{S^{-1}(k_{(2)})}[k_{(1)}][S(k_{(3)})][h_{(1)}]  \ot_A [h_{(2)}] \\
& = & [k_{(3)}][S^{-1}(k_{(2)})][k_{(1)}][S(k_{(4)})][h_{(1)}]  \ot_A [h_{(2)}]\\
& = & [k_{(3)}][S^{-1}(k_{(2)})k_{(1)}][S(k_{(4)})][h_{(1)}]  \ot_A [h_{(2)}]\\
& = & [k_{(1)}][S(k_{(2)})][h_{(1)}]  \ot_A [h_{(2)}]
 =  \ve_{k}\triangleright [h_{(1)}]  \ot_A [h_{(2)}]
 =  \ve_{k}\triangleright \ud_l ([h])
\eqnast
and by an analogous argument, one can prove that 
\[
\ud_l ([h]\triangleleft \ve_k ) = \ud_l ([h])\triangleleft \ve_k .
\]

 By construction, the comultiplication $\ud_l$ is obviously
coassociative.

The left counit,
\[
\ue_l ([h^1]\ldots [h^n]) =\ue_l (\ve_{h^1_{(1)}}\ldots \ve_{h^1_{(n-1)}\ldots h^{n-1}_{(1)}} [h^1_{(n)}\ldots h^n]) =\ve_{h^1_{(1)}}\ldots \ve_{h^1_{(n-1)}\ldots h^{n-1}_{(1)}} \ve_{h^1_{(n)}\ldots h^n} ,
\]
is, by construction, left $A$ linear. In order to prove that it is right $A$ linear, consider $[h]\in H_{par}$ and $\ve_k \in A$, then 
\beqnast
\ue_l ([h]\triangleleft \ve_k ) & = & \ue_l (\te_{S^{-1}(k)}[h]) = 
\ue_l ([k_{(2)}][S^{-1}(k_{(1)})][h])=  \ue_l ([k_{(2)}][S^{-1}(k_{(1)})h]) \\
& = & \ue_l (\ve_{k_{(2)}}[k_{(3)}S^{-1}(k_{(1)})h]) =\ve_{k_{(2)}}\ve_{k_{(3)}S^{-1}(k_{(1)})h} \\
& = & [k_{(3)}][S(k_{(4)})][k_{(5)}S^{-1}(k_{(2)})h_{(1)}]
[S(k_{(6)}S^{-1}(k_{(1)})h_{(2)})]\\
& = & [k_{(3)}][S^{-1}(k_{(2)})h_{(1)}][S(S^{-1}(k_{(1)})h_{(2)})S(k_{(4)})]\\
& = & [k_{(3)}][S^{-1}(k_{(2)})h_{(1)}][S(S^{-1}(k_{(1)})h_{(2)})][S(k_{(4)})]\\
& = & [h_{(1)}][S(h_{(2)})k_{(1)}][S(k_{(2)})]\\
& = & [h_{(1)}][S(h_{(2)})][k_{(1)}][S(k_{(2)})] =\ve_h \ve_k \\
& = & \ue_l ([h])\ve_k .
\eqnast

Because of the left $A$ linearity of $\ud_l$ and $\ue_l$, the counit axiom is easily verified, because it is needed to
check only in elements of the form $[h]\in H_{par}$.

Let us verify the property of the left counit,
\[
\ue_l (xy)=\ue_l (xs(\ue_l (y))) =\ue_l (xt(\ue_l (y))), \qquad \forall x,y\in H_{par} .
\]
Because of the left $A$ linearity, it is enough to verify for the case of $x=[h]$ and $y=[k^1 ]\ldots [k^n ]$. First, note that, for $[h]\in H_{par}$ and $a\in A$, using the isomorphism of $H_{par}$ with $\underline{A\#H }$,
\[
[h]s(a)=(\um \# h)(a\# 1_H) =((h_{(1)}\cdot a)\# h_{(2)}) =(h_{(1)}\cdot a)[h_{(2)}] ,
\]
then
\[
\ue_l ([h]s(a))=\ue_l ((h_{(1)}\cdot a)[h_{(2)}])=(h_{(1)}\cdot a)\ve_{h_{(2)}} =h\cdot a .
\]
Now, for $x=[h]$ and $y=[k^1]\ldots [k^n ]$, we have
\beqnast
\ue_l ([h]s(\ue_l (y))) & = & h\cdot \ue_l ([k^1]\ldots [k^n ]) \\
& = & [h_{(1)}] \ve_{k^1_{(1)}}\ldots \ve_{k^1_{(n)}\ldots k^n} [S(h_{(2)})]\\
& = & \ve_{h_{(1)}k^1_{(1)}}\ldots \ve_{h_{(n)}k^1_{(n)}\ldots k^n}\ve_{h_{(n+1)}}\\
& = & \ve_{h_{(1)}}\ve_{h_{(2)}k^1_{(1)}}\ldots \ve_{h_{(n+1)}k^1_{(n)}\ldots k^n}\\
& = & \ue_l (\ve_{h_{(1)}}\ve_{h_{(2)}k^1_{(1)}}\ldots [h_{(n+1)}k^1_{(n)}\ldots k^n])\\
& = & \ue_l ([h][k^1]\ldots [k^n ]) =\ue_l ([h]y) .
\eqnast
For the second equality, first note that, for $[h]\in H_{par}$ and $\ve_k \in A$,
\beqnast
\ue_l ([h]t(\ve_k )) & = & \ue_l ([h]\te_{S^{-1}(k)})=
\ue_l (\te_{S^{-1}(k)S^{-1}(h_{(2)})} [h_{(1)}])\\
& = & \ue_l (\te_{S^{-1}(h_{(2)}k)} [h_{(1)}])=
\ue_l ([h_{(1)}]\triangleleft \ve_{h_{(2)}k})\\
& = & \ue_l ([h_{(1)}]) \ve_{h_{(2)}k} .
\eqnast
Then,
\beqnast
\ue_l ([h]t(\ue_l (y))) & = & 
\ue_l ([h]t(\ve_{k^1_{(1)}}\ldots \ve_{k^1_{(n)}\ldots k^n})) \\
& = & \ue_l ([h]t(\ve_{k^1_{(n)}\ldots k^n})\ldots t(\ve_{k^1_{(1)}})) \\
& = & \ue_l ([h_{(1)}]) \ve_{h_{(2)}k^1_{(1)}}\ldots \ve_{h_{(n+1)}k^1_{(n)}\ldots k^n}\\
& = & \ue_l ([h]y)
\eqnast

Therefore the data $(H_{par}, A, s,t,\ud_l , \ue_l )$ define a structure of a left $A$ bialgebroid. With analogous
techniques, one can prove that the data $( H_{par}, \tilde{A}, \tilde{s},\tilde{t},\ud_r , \ue_r )$ is a right $A$
bialgebroid structure on $H_{par}$.

Finally, the antipode axioms give, for $x=[h^1]\ldots [h^n]$,
\beqnast
x_{(1)}\mathcal{S}(x_{(2)}) & = & [h^1_{(1)}]\ldots [h^n_{(1)}][S(h^n_{(2)})]\ldots [S(h^1_{(2)})]\\
& = & [h^1_{(1)}]\ldots \ve_{h^n}[S(h^{n-1}_{(2)})]\ldots [S(h^1_{(2)})]\\
& = & [h^1_{(1)}]\ldots [h^{n-1}_{(1)}][S(h^{n-1}_{(2)})]
\ve_{h^{n-1}_{(3)}h^n}[S(h^{n-2}_{(2)})]\ldots [S(h^1_{(2)})]\\
& = & [h^1_{(1)}]\ldots \ve_{h^{n-1}_{(1)}}[S(h^{n-2}_{(2)})]\ldots [S(h^1_{(2)})]
\ve_{h^1_{(3)}\ldots h^{n-1}_{(2)} h^n}\\
& = & \ve_{h^1_{(1)}}\ldots \ve_{h^1_{(n)}\ldots h^{n-1}_{(2)} h^n}\\
& = & s(\ue_l (x)) ,
\eqnast
similarly, we get
\[
\mathcal{S}(x_{(1)}) x_{(2)} =\tilde{s}(\ue_r (x)) .
\]
Therefore, $H_{par}$ is an $A$-Hopf algebroid.
\end{proof}

Our construction that associates to each Hopf algebra $H$, its partial ``Hopf'' algebra $H_{par}$ is in fact
functorial, as we proof in next proposition.

\begin{prop}\prlabel{Hoidfunctor}
The construction of the partial Hopf algebra defines a functor $F: \underline{\mbox{\bf{HopfAlg}}} \rightarrow
\underline{\mbox{\bf{HopfAlgbd}}}$.
\end{prop}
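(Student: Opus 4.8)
The assignment on objects is already the Hopf algebroid structure on $H_{par}$ constructed above, so the work is to define $F$ on morphisms, to verify functoriality, and to check that each $F(f)$ is a morphism of Hopf algebroids; my plan is to extract all of this from the universal property of $H_{par}$ (\thref{univHpar}). Given a morphism of Hopf algebras $f\colon H\to H'$, write $[\underline{\;}]$ and $[\underline{\;}]'$ for the canonical partial representations on $H_{par}$ and $H'_{par}$. I would first show that $[\underline{\;}]'\circ f\colon H\to H'_{par}$ is a partial representation of $H$: each of (PR1)--(PR5) reduces to the same axiom for $[\underline{\;}]'$ after applying $f$, using that $f$ preserves unit, product, coproduct (so $f(h_{(1)})\ot f(h_{(2)})=f(h)_{(1)}\ot f(h)_{(2)}$) and antipode ($f\circ S=S'\circ f$). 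For example (PR2) reads
\[
[f(h)]'[f(k)_{(1)}]'[S'(f(k)_{(2)})]'=[f(h)f(k)_{(1)}]'[S'(f(k)_{(2)})]'=[f(hk_{(1)})]'[f(S(k_{(2)}))]',
\]
and the other four are analogous. The universal property then yields a unique algebra map $f_{par}\colon H_{par}\to H'_{par}$ with $f_{par}\circ[\underline{\;}]=[\underline{\;}]'\circ f$, and I set $F(f):=f_{par}$.

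Functoriality would then follow immediately from the uniqueness clause of \thref{univHpar}: the identity of $H_{par}$ and $F(\mathrm{id}_H)$ both factorize $[\underline{\;}]$ and hence agree, while for composable $f,g$ both $F(g)\circ F(f)$ and $F(g\circ f)$ factorize the partial representation $[\underline{\;}]''\circ(g\circ f)$ and hence coincide.

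Next I would check that $F(f)$ respects the Hopf algebroid structure. Since $F(f)([h])=[f(h)]'$ and $F(f)$ is an algebra map, I get $F(f)(\ve_h)=\ve'_{f(h)}$ and $F(f)(\te_h)=\te'_{f(h)}$, so $F(f)$ restricts to algebra maps $f_A\colon A\to A'$ and $f_{\tilde A}\colon\tilde A\to\tilde A'$ on the base subalgebras. Compatibility with $s,t,\tilde s,\tilde t$ is then immediate, as these are built from the inclusions of $A,\tilde A$ and from $\mathcal S'$, and $F(f)$ intertwines $\mathcal S$ and $\mathcal S'$ with their primed counterparts (again by $f\circ S=S'\circ f$). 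Compatibility with $\ud_l,\ud_r,\ue_l,\ue_r$ and with the antipode $\mathcal S$ I would verify on the generators $[h]$ alone, where the explicit coproduct/counit formulas reduce everything to $f$ being a coalgebra morphism; e.g.\ $\ud_l'(F(f)[h])=[f(h_{(1)})]'\ot_{A'}[f(h_{(2)})]'$ matches $(F(f)\ot F(f))\ud_l[h]$, and $\ue_l'(F(f)[h])=\ve'_{f(h)}=f_A(\ue_l[h])$.

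The step I expect to require real care is that the base algebras of $H_{par}$ and $H'_{par}$ are genuinely different, so $F(f)$ is a morphism of Hopf algebroids \emph{over a change of base}: before the comultiplication identities can even be stated one must make precise that the pair $(F(f),f_A)$ induces a well-defined map $H_{par}\ot_A H_{par}\to H'_{par}\ot_{A'}H'_{par}$ on the relative tensor products. The hard part is thus not a single computation but assembling the correct notion of morphism over a varying base and confirming the descent of $F(f)\ot F(f)$, which is licensed exactly by $F(f)\circ s=s'\circ f_A$ and $F(f)\circ t=t'\circ f_A$; once this is settled, the remaining axioms are routine and all reduce to checks on single generators $[h]$.
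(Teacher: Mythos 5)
Your proposal is correct and follows essentially the same route as the paper: define $F(f)$ via the universal property of $H_{par}$ applied to the partial representation $[\underline{\;}]'\circ f$, get functoriality from the uniqueness clause, and verify the Hopf algebroid morphism axioms on the generators $[h]$ using that $f$ is a bialgebra map commuting with antipodes. The only difference is that you explicitly flag and resolve the change-of-base issue for the relative tensor products (via compatibility with $s$ and $t$), a point the paper's proof leaves implicit; this is a welcome refinement rather than a departure.
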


\begin{proof} 
On objects, the functor $F$ is described by $F(H)=H_{par}$. Now let $\varphi: H \mapsto L$ be a morphism of Hopf
algebras. By composing $\phi$  with the partial
representation $[\underline{\; } ] : L \rightarrow L_{par}$ we obtain  a partial representation of $H$ on $L_{par}$.
Therefore, we have a unique algebra morphism $\varphi_{par} : H_{par} \rightarrow L_{par}$ which is defined on
generators by $\varphi_{par} ([h]) = [\varphi(h)]$. We then define $F(\varphi)=\varphi_{par}$. It is easy to see that,
for the composition of morphisms $\varphi :H\rightarrow L$ with $\psi :L \rightarrow K$, we have 
$(\psi\circ \varphi)_{par}=\psi_{par} \circ \varphi_{par}$ and that $(\mbox{Id}_H)_{par} =\mbox{Id}_{H_{par}}$. Besides
that, $\varphi_{par}$ restricts to an algebra morphism  $\varphi_{par} : A(H_{par}) \rightarrow A(L_{par})$, where
$A(H_{par})$ is the base algebra for the Hopf algebroid structure of $H_{par}$. 
Finally, it is straightforward to prove that $\varphi_{par}$ is
a morphism of Hopf algebroids. For instance, to prove the preservation of the left comultiplication we proceed as
follows
\begin{eqnarray*}
(\varphi_{par} \otimes \varphi_{par})\ud_{H_{par}} ([h]) & = & \varphi_{par} [h_{(1)}]\otimes_{A(L_{par})} \varphi_{par}
[h_{(2)}] \\
& = & [\varphi (h_{(1)})]\ot_{A(L_{par})} [\varphi (h_{(2)})] \\
& = & [\varphi (h)_{(1)}]\ot_{A(L_{par})} [\varphi (h)_{(2)}] \\
& = & \ud_{L_{par}} ([\varphi (h)])\\
& = & \ud_{L_{par}} (\varphi_{par} ([(h)]))
\end{eqnarray*}
And analogously for the other operations of the Hopf algebroids.
\end{proof}

\subsection{The subalgebra $A$ of $H_{par}$}

We saw that $H_{par}$ is isomorphic to the partial smash product $A \# H$, where $A$ is the
subalgebra generated by the elements $\varepsilon_h = [h_{(1)}][S(h_{(2)})]$ and $H$ acts (partially) on $A$ by
$h \cdot \varepsilon_k = \varepsilon_{h_{(1)}k}\varepsilon_{h_{(2)}}$ (see \thref{Hparisosmash}). If one knows
$A$ then one might determine $H_{par}$ using the partial smash product. However, we defined $A$ as a particular
subalgebra of $H_{par}$. Our next aim is to show that it is possible to determine $A$ on its own, without calculating
$H_{par}$ first.

Consider a partial action of a Hopf algebra $H$ on an algebra $B$. Let $\pi: H \to \End(B)$ be the corresponding partial
representation and let
$\ev_1 : \mbox{End}(B) \rightarrow B$ be the evaluation map $f \mapsto f(1)$. Consider the diagram below.

\[
\xymatrix
{
& H_{par} \ar[r]^{\ue_l} \ar[d]^{\hat{\pi}} & A \ar[d]^{\hat{\pi}_1} \\
H \ar[r]_\pi \ar[ur]^{[\ul{\ }]} & \End(B) \ar[r]_{\ev_1} & B
}
\]
where $\hat{\pi}_1$ stands for the restriction of $\hat\pi$ to $A$.

The last square commutes, since
\[
\hat{\pi}_1 (\ue_l (h)) = \hat{\pi}_1 (\varepsilon_h ) = h_{(1)} \cdot (S(h_{(2)}) \cdot 1)  = h \cdot 1 = \ev_1
\hat{\pi}([h]).
\]

Note also that if we put $e=\ev_1\circ\pi : H \rightarrow B$, then we have 
\begin{eqnarray}
e(h) &= & e(h_{(1)})e(h_{(2)})             \label{A(Hpar).eqn1}\\
e(h_{(1)}) e(h_{(2)}k)&= & e(h_{(1)}k)e(h_{(2)}) \label{A(Hpar).eqn2}
\end{eqnarray}
We are going to show that these two equations define $A\subset H_{par}$.

\begin{thm}\thlabel{Auniv}
Let $H$ be a Hopf algebra a with invertible antipode. Then the algebra $A\subset H_{par}$ satisfies the following
universal property. For any algebra $B$ endowed with a linear map $e_B:H\to B$ that satisfies the equations
\eqref{A(Hpar).eqn1} and \eqref{A(Hpar).eqn2}, then there exists a unique morphism $u:A\to B$ such that $e_B=u\circ e$. 
\[
\xymatrix
{
&  A \ar@{.>}[d]^{u} \\
H \ar[r]_{e_B} \ar[ur]^{e} & B
}
\]
\end{thm}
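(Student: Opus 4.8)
The plan is to exhibit $A$ as the solution of a presentation problem and then transport the universal property. Concretely, let $A_{\mathrm{univ}}$ denote the $k$-algebra generated by symbols $e(h)$, depending $k$-linearly on $h\in H$, subject to the relations $e(1_H)=1$, $e(h)=e(h_{(1)})e(h_{(2)})$ and $e(h_{(1)})e(h_{(2)}k)=e(h_{(1)}k)e(h_{(2)})$ for all $h,k\in H$. By construction $A_{\mathrm{univ}}$ together with its tautological map $H\to A_{\mathrm{univ}}$ has exactly the asserted universal property; note that the normalization $e(1_H)=1$ is precisely what matches $\ve_{1_H}=1_{H_{par}}$ and the unitality of $u$. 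Hence it suffices to produce a pair of mutually inverse algebra morphisms between $A$ and $A_{\mathrm{univ}}$ identifying $e(h)$ with $\ve_h=[h_{(1)}][S(h_{(2)})]$; the uniqueness of $u$ is then immediate, since the $\ve_h$ generate $A$ as an algebra.

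For the first morphism $\alpha\colon A_{\mathrm{univ}}\to A$, $e(h)\mapsto\ve_h$, I would check that $h\mapsto\ve_h$ satisfies the defining relations. Equation \eqref{A(Hpar).eqn1} is precisely the identity $\ve_{h_{(1)}}\ve_{h_{(2)}}=\ve_h$ already established, and $\ve_{1_H}=1$ gives the normalization. For \eqref{A(Hpar).eqn2} one expands both sides as products of brackets and reduces them with the defining relations of $H_{par}$ (merging $[S(h_{(2)})]$ with the neighbouring bracket and using $S(h_{(2)})h_{(3)}=\ve(h_{(2)})1_H$) until both collapse to a common expression. This yields a well-defined algebra map $\alpha$, which is surjective because its image contains all generators $\ve_h$.

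The substantial step is the reverse morphism $\beta\colon A\to A_{\mathrm{univ}}$, $\ve_h\mapsto e(h)$. The strategy is to recognize $A_{\mathrm{univ}}$ as the base algebra of a partial smash product. I would endow $A_{\mathrm{univ}}$ with a symmetric partial action of $H$ by setting $h\cdot 1=e(h)$ and $h\cdot e(k)=e(h_{(1)}k)e(h_{(2)})$, extended to products through $h\cdot(ab)=(h_{(1)}\cdot a)(h_{(2)}\cdot b)$. Here (PA1) is exactly the normalization $e(1_H)=1$, the compatibility of $h\cdot 1=e(h)$ with (PA2) is \eqref{A(Hpar).eqn1}, while (PA3) evaluated on the unit is \eqref{A(Hpar).eqn2} and the symmetry (PA4) on the unit holds automatically; the remaining checks that this is well defined with respect to the two defining relations and satisfies all the axioms mirror, intrinsically in $A_{\mathrm{univ}}$, the computations carried out for $A\subset H_{par}$ in \thref{Hparisosmash}. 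Granting this, $\pi_0\colon H\to\underline{A_{\mathrm{univ}}\# H}$, $\pi_0(h)=1\# h$, is a partial representation by \exref{smash}, so by the universal property of $H_{par}$ (\thref{univHpar}) there is an algebra morphism $\Theta\colon H_{par}\to\underline{A_{\mathrm{univ}}\# H}$ with $\Theta([h])=1\# h$. Computing on the generators of $A$,
\[
\Theta(\ve_h)=(1\# h_{(1)})(1\# S(h_{(2)}))=(h_{(1)}\cdot 1)\# h_{(2)}S(h_{(3)})=e(h)\# 1_H=\phi_0(e(h)),
\]
so $\Theta$ maps $A$ into the image of the injective algebra morphism $\phi_0\colon A_{\mathrm{univ}}\to\underline{A_{\mathrm{univ}}\# H}$. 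Setting $\beta=\phi_0^{-1}\circ\Theta|_A$ gives the desired algebra map with $\beta(\ve_h)=e(h)$.

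Finally, $\alpha$ and $\beta$ are mutually inverse, since $\beta\alpha(e(h))=\beta(\ve_h)=e(h)$ and $\alpha\beta(\ve_h)=\alpha(e(h))=\ve_h$ on the respective sets of generators and both are algebra morphisms. Thus $A\cong A_{\mathrm{univ}}$ compatibly with the maps from $H$, and $A$ inherits the universal property: given $(B,e_B)$, the unique morphism is $u=\bar u\circ\beta$, where $\bar u\colon A_{\mathrm{univ}}\to B$ is the tautological factorization. I expect the main obstacle to be the verification that the formulas above define a genuine symmetric partial action of $H$ on $A_{\mathrm{univ}}$ using only the relations \eqref{A(Hpar).eqn1}--\eqref{A(Hpar).eqn2}; this is the one place where a nontrivial amount of Sweedler bookkeeping is unavoidable, and it is also exactly what forces the normalization $e(1_H)=1$ through (PA1).
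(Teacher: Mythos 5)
Your proposal is correct and follows essentially the same route as the paper: you present the universal algebra by generators and relations (the paper's $A_1$), map it onto $A$ via $e(h)\mapsto\ve_h$, and construct the inverse by equipping the presented algebra with the partial action $h\cdot e(k^1)\cdots e(k^n)=e(h_{(1)}k^1)\cdots e(h_{(n)}k^n)e(h_{(n+1)})$ and invoking the universal property of $H_{par}$. The only cosmetic difference is that you extract the inverse through $\underline{A_{\mathrm{univ}}\#H}$ and $\phi_0^{-1}$ where the paper uses $\End(A_1)$ and evaluation at $1$, and both arguments defer the same verification that the formula above is a well-defined symmetric partial action.
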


\begin{remark}\relabel{Auniv}
It might be clear that the (unique) algebra that satisfies the universal property as in the statement of the theorem
above, is the quotient of the tensor algebra $T(H)$ by the ideal generated by elements of the form \\
\begin{enumerate}
 \item  $1_H - 1_{T(H)}$; 
\item $h - h_{(1)} \otimes h_{(2)}$; 
\item $h_{(1)} \otimes h_{(2)}k - h_{(1)}k \otimes h_{(2)}$. 
\end{enumerate}
Let us denote this algebra by  $A_1$
\end{remark}

\begin{proof}[Proof of \thref{Auniv}]
We will show that there is a canonical algebra isomorphism of $A_1$ with $A(H_{par})$.
Let $E: H \rightarrow A_1$ be the composition of the inclusion map $H \rightarrow T(H)$ with the canonical projection
of $T(H)$ on $A_1$. 
By construction, given a linear map $e: H \rightarrow B$ from $H$ into an algebra $B$ that satisfies
\eqref{A(Hpar).eqn1} and 
\eqref{A(Hpar).eqn2}, there is a unique algebra map $u: A_1 \rightarrow B $ such $e_B=u\circ E$. 
Since the map $e : H \rightarrow A$ given by 
$e(h) = \ue_l ([h]) = \varepsilon_h $ satisfies  \eqref{A(Hpar).eqn1} and 
\eqref{A(Hpar).eqn2}, there is a unique algebra morphism $\varphi : A_1 \rightarrow A$ such that $\varphi\circ E =
e$.

On the other hand,  we can define a partial action of $H$ on $A_1$ as follows.
Consider the $k$-linear map $\Phi : H \otimes T(H) \rightarrow T(H)$ defined on homogeneous elements by 
\[
\Phi (h \otimes (k^1 \otimes \cdots \otimes k^n) = h_{(1)}k^1 \otimes \cdots \otimes h_{(n)} k^n \otimes h_{(n+1)}
\]
If $J$ is the ideal of \reref{Auniv} above then $\Phi(H \otimes J) \subset J$. For instance,
if we consider the generator 
$k -  k_{(1)} \otimes k_{(2)}$, we have
\[
\Phi (h \otimes (k - k_{(1)} \otimes k_{(2)}) ) = (h_{(1)}k   - h_{(1)(1)}k_{(1)} \otimes h_{(1)(2)}k_{(2)} ) \otimes h_{(2)} 
\]
which lies in $J$. The same happens for $1_H - 1_{T(H)}$ and for the family of generators. From the definition of
$\Phi$ it follows that the same occurs for every element of $J$. Therefore, by passing to the quotient we have a
well-defined $k$-linear map 
\begin{eqnarray*}
\overline{\Phi}:
H \otimes A_1 & \rightarrow & A_1 \\
h \otimes E(k^1) \cdots E(k^n) & \mapsto &  E(h_{(1)}k^1) \cdots E(h_{(n)}k^n) E(h_{(n+1)})
\end{eqnarray*}
Now it can be checked that this map defines a partial action of $H$ on $A_1$, and therefore we may consider also the
associated partial representation $\pi: H \rightarrow \mbox{End}(B) $, and the linear map $e = \ev_1 \pi: H \rightarrow
A_1$ defined by $h \mapsto h \cdot 1 = E(h)$.
By the universal property of $H_{par}$ (and the discussion in the beginning of this section) there is a unique algebra
morphism 
$\hat{\pi}_1: A(H_{par}) \rightarrow A_1$ such that  $\hat{\pi}(\varepsilon_h) = \hat{\pi}_1 \ue_l (h) = e \hat{\pi}
(h) = E(h)$. Now it is easy to see that $\hat{\pi}_1$ is the inverse to $\varphi: A_1 \rightarrow A(H_{par})$ obtained
above as $E(h) \mapsto \varepsilon_h$.
\end{proof}

We can use the above result to determine $H_{par}$ for some Hopf algebras first calculating the universal algebra
$A(H_{par})$ and then constructing the partial smash product.

\begin{exmp} Let $H$ be  the Sweedler Hopf algebra $H_4$.  We will see that  $A(H_{par})$ is a commutative algebra isomorphic to $k[X,Z]/I$ where $I$ is the ideal generated by $2X^2-X$ and $2XZ-Z $. In order to obtain this result we will work with the basis  $e_1 = (1+g)/2$, $e_2 = (1-g)/2$, $h_1 = xe_1$, 
$h_2 = xe_2$. This choice makes easier some computations because $e_1$ and $e_2$ are idempotent and $h_1,h_2$ generate the radical of $H_4$ (and are nilpotent of order 2). The multiplication table of $H_4$ in this new basis elements reads 
\[
\begin{array}{|c|c|c|c|c|}
\hline
 & e_1 & e_2 & h_1 & h_2 \\
\hline 
e_1 & e_1 & 0 & 0 & h_2 \\
\hline 
e_2 & 0 & e_2 & h_1 & 0  \\
\hline 
h_1 & h_1 & 0 & 0 & 0 \\
\hline 
h_2 & 0 & h_2 & 0 & 0 \\
\hline 
\end{array} 
\]
The expressions for the coproducts of this new basis, are 
\begin{eqnarray}
\Delta (e_1) &=& e_1 \otimes e-1 + e_2 \otimes e_2 , \nonumber\\
\Delta (e_2) &=& e_1 \otimes e_2 + e_2 \otimes e_1 , \nonumber\\
\Delta (h_1) &=& e_1 \otimes h_1 - e_2 \otimes h_2 + h_1 \otimes e_1 
+h_2 \otimes e_2 , \nonumber\\
\Delta (h_2) &=& e_1 \otimes h_2 - e_2 \otimes h_1 + h_1 \otimes e_2 
+h_2 \otimes e_1 .\nonumber
\end{eqnarray}
The counit calculated in the elements of this new basis takes the values
$\epsilon(e_1 ) =1$ and $\epsilon (e_2 )=\epsilon (h_1 )=
\epsilon (h_2 )=0$. Finally, the antipode  of these elements are given by
\[
S(e_1 )=e_1, \quad S(e_2 )=e_2, \quad S(h_1 )=-h_2,   \quad  
S(h_2)=h_1 .
\]

In what follows, let us call $e = \varepsilon_{e_1}$, $x = \varepsilon_{e_2}$, $y = \varepsilon_{h_1}$, $z =
\varepsilon_{h_2}$. 

We recall that the defining equations for $A(H_{par})$ are 
\begin{eqnarray}
\varepsilon{h} & = & \varepsilon_{h_{(1)}}\varepsilon_{h_{(2)}} \label{sweedler.eqn1}\\
\varepsilon_{h_{(1)}k}\varepsilon_{h_{(2)}} & = &  \varepsilon_{h_{(1)}}\varepsilon_{h_{(2)}k} \label{sweedler.eqn2}
\end{eqnarray}
for $h,k \in H$.

Writing the equations \eqref{sweedler.eqn2} for these generators one concludes that $A(H_{par})$ is commutative and
that $xy = xz = 0$.

Doing the same for equations \eqref{sweedler.eqn1} and using that $1 = e + x$ (since $1 = e_1 + e_2$ in $H_4$) one
obtains 
\[
(1 - x)  =  (1 - x )^2  + x^2
\]
\[
x  =  2(1-x)x, \ \ y  =  2(1-x)y,  \ \ z  =  2(1-x)z 
\]
and then from $xy=0$  we get $y= 2y$, and hence $y=0$. 
The second equation above also yields $2x^2=x$.  Therefore, we obtain 
\begin{equation}
y = 0, \ \ 2x^2=x, \ \ 2xz=z
\end{equation}
i.e., 
\begin{equation}
\varepsilon_{h_1} = 0, \ \ 2\varepsilon_{e_2}^2=\varepsilon_{e_2}, \ \ 2\varepsilon_{e_2}\varepsilon_{h_2}=\varepsilon_{h_2} 
\end{equation}

Since these latter equations were obtained using only \eqref{sweedler.eqn1} and \eqref{sweedler.eqn2}, if  $e: H_4
\rightarrow B $ is any linear map 
satisfying \eqref{A(Hpar).eqn1} and \eqref{A(Hpar).eqn2} from last section 
then it follows that  
\begin{equation}
e(h_1) = 0, \ \ 2e(e_2)^2=e(e_2), \ \ 2e(e_2)e(h_2)=e(h_2). \label{defining.eqn}
\end{equation}

Let $E: H_4 \rightarrow k[X,Z]/I$ be the map taking $e_1$ to $1-X$, $e_2$ to $X$, $h_1$ to $0$ and $h_2$ to $Z$. It can
be checked that this map satisfies \eqref{A(Hpar).eqn1} and \eqref{A(Hpar).eqn2}. Consider now another linear map $e :
H_4 \rightarrow B$  that satisfies these same equations. Then  $e$ also satisfies  \eqref{defining.eqn} and there is a
unique algebra morphism $f: k[X,Z]/I \rightarrow B$ such that the triangle below commutes.

\[
\xymatrix{
              &  k[X,Z]/I \ar[d]^f \\
H_4 \ar[r]^e \ar[ur]^E & B \\
}
\]
In particular, the linear map $h \in H_4 \mapsto \varepsilon_h \in A(H_{par})$ yields an algebra isomorphism with
$k[X,Z]/I$, given on generators by $X \mapsto \varepsilon_{e_2}$ and $Z \mapsto \varepsilon_{h_2}$. 

This isomorphism allows us to describe more explicitly, in this case, the action of $H_{par}$ on $A(H_{par})$, which is
given by 
$h \cdot \varepsilon_k = \varepsilon_{h_{(1)}k} \varepsilon_{h_{(2)}}$. Following this formula but using the isomorphism
above we have the partial action of $H_4$ on $k[X,Z]/I$ given by 
\begin{align*}
e_1 \cdot  1 & = (1-X), &  e_2 \cdot 1 & = X,  &  h_1 \cdot 1 &  = 0 , &  h_2 \cdot 1 &  = Z/2 \\
e_1 \cdot  X &= X/2, & e_2 \cdot X &  = X/2 , &  h_1 \cdot X &  = 0, &   h_2 \cdot X &  = Z/2 \\
e_1 \cdot  Z^n &= Z^n/2, &  e_2 \cdot Z^n & = Z^n/2, &  h_1 \cdot Z^n & = 0 ,  & h_2 \cdot Z^n & = Z^{n+1}
\end{align*}

Note that, for instance, that an algebra morphism $k[X,Z]/I \rightarrow k$ must either send both $X$ and $Z$ to zero, or
send $X$ to $1/2$ and $Z$ to any given $\alpha \in k$. In this manner we reobtain all the partial actions of $H_4$ on
the field $k$  \cite{AAB}.  

Having described $A(H_{par})$ we can now describe the whole algebra $H_{par}$, since $H_{par}$ is isomorphic to the
partial smash product $A(H_{par}) \# H$.

$A(H_{par}) \# H$ is generated, as a vector space, by elements of the form $1 \# k$, $x \# k$ and $z^n \# k$ for $k \in
\{e_1,e_2,h_1,h_2\}$ and $n \geq 1$.  From these generators we can extract a basis of  $A(H_{par}) \# H$. 

For instance, 
\[
x \# e_1 = x (e_1 \cdot 1) \# e_1 + x (e_2 \cdot 1) \# e_2  =  x^2  \# e_1 + x(1-x) \# e_2 = 
\dfrac{x}{2} \# e_1 + \dfrac{x}{2} \# e_2 = \dfrac{x}{2} \# 1
\]
and it follows that 
\[
x \# e_1 = x \# e_2 = \dfrac{x}{2} \# 1.\]
 Analogous computations yield 
\begin{align*}
z^n  \# e_1   =& z^n \#e_2  = \dfrac{z^n}{2} \#1\\
x \# h_2   =& z \# 1 - x \# h_1 \\
z^{n}  \# h_2  = &z^{n+1} \#1 - z^n\#h_1 
\end{align*}

It follows that  $\underline{A({H_4}_{par}) \# H_4}$  is generated by $1 \#e_1$, $1\#e_2$, $1\#h_1$, $1\# h_2$, $x \#
1$,
$z^n \# 1$, $x \#h_1$, $z^n \# h_1$. 
 It can be verified that this is indeed a basis by expressing  these elements in terms of the tensor product $A \otimes
H$, taking a linear dependence of them and applying appropriate linear maps  of the form $I_{A(H_{par}) } \otimes
\varphi$ to it (with $\varphi \in (H_4)^*$).
Therefore, the algebra ${H_4}_{par} \cong \underline{A({H_4}_{par})\# H_4 }$ is completely determined. 

Remark that this example gives a negative answer to the question raised in \reref{Hparfinite}, whether the partial
``Hopf'' algebra of a finite dimensional Hopf algebra is always finite dimensional. Also, as seperable-Frobenius
algebras are finite dimensional, it also shows that the base algebra of $H_{par}$ as a Hopf algebroid is not
necessarily a separable-Frobenius algebra (as is the case for the Hopf algebroid associated to a weak Hopf algebra).
\end{exmp}

\section{The category of partial modules}

Partial representations allow us to provide a correct categorical framework for partial actions. 
\begin{defi}
Let $H$ be a Hopf algebra. 
A (left) {\em partial module} over $H$ is a pair $(M,\pi)$, where $M$ is a $k$-vector space and $\pi:H\to \End_k(M)$ is a (left) partial representation of $H$. \\
If $(M,\pi)$ and $(M',\pi')$ are partial $H$-modules, then a morphism of partial $H$-modules is a $k$-linear map $f:M\to M'$ satisfying $f\circ \pi(h)= \pi'(h)\circ f$ for all $h\in H$.\\
The category with as objects partial $H$-modules and the morphisms defined above between them is denoted as ${_H\Mm}^{par}$
\end{defi}

\begin{remark}
Using the classical Hom-Tensor relations, a $k$-vector space $M$ is a partial $H$-module if and only if there exists a $k$-linear map $\bul:H\ot M\to M$ satisfying the following axioms for all $m\in M$ and all $h, k\in H$
\begin{enumerate}[{[PM1]}]
\item $1_H \bul m= m$;
\item $h\bul (k_{(1)}\bul (S(k_{(2)}\bul m))=(hk_{(1)})\bul (S(k_{(2)})\bul m)$;
\item $h_{(1)}\bul (S(h_{(2)})\bul (k\bul m))=h_{(1)}\bul (S(h_{(2)})k\bul m)$;
\item $h\bul (S(k_{(1)}) \bul (k_{(2)} \bul m)) =hS(k_{(1)}) \bul (k_{(2)} \bul m)$;
\item $S(h_{(1)})\bul (h_{(2)} \bul (k\bul m)) = S(h_{(1)})\bul (h_{(2)} k\bul m)$.
\end{enumerate}
\end{remark}

From our previous considerations we now immediately obtain the following.

\begin{cor}
Let $H$ be a Hopf algebra, then there is an isomorphism of categories
$${_H\Mm^{par}}\cong {_{H_{par}}\Mm}.$$ 
and these categories are furthermore equivalent to the category of left $\underline{A\# H}$-modules, 
${_{\underline{A\# H}}\Mm}$. 
Moreover, as $H_{par}$ has a structure of a Hopf algebroid over $A$, ${_H\Mm^{par}}$ is a closed monoidal category
admitting a strict monoidal functor that preserves internal homs
$$U :{_H\Mm^{par}}\to {_A\Mm_A}.$$
\end{cor}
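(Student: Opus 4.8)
The plan is to obtain the three assertions in succession from the universal property of $H_{par}$ (\thref{univHpar}), the algebra isomorphism $H_{par}\cong\underline{A\# H}$ (\thref{Hparisosmash}), and the general theory of module categories over bialgebroids. First I would establish ${_H\Mm^{par}}\cong{_{H_{par}}\Mm}$ by specialising \thref{univHpar} to $B=\End_k(M)$: an object $(M,\pi)$ of ${_H\Mm^{par}}$ is by definition a partial representation $\pi:H\to\End_k(M)$, which corresponds bijectively to an algebra morphism $\hat{\pi}:H_{par}\to\End_k(M)$, that is, to a left $H_{par}$-module structure on $M$. On the level of arrows, a $k$-linear map $f:M\to M'$ satisfies $f\circ\pi(h)=\pi'(h)\circ f$ for all $h\in H$ if and only if $f$ is $H_{par}$-linear; one implication is immediate from $\hat{\pi}([h])=\pi(h)$, and the converse holds because the elements $[h]$ generate $H_{par}$ as an algebra. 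Since the assignment is the identity on underlying $k$-modules and $k$-linear maps, it is a strict isomorphism of categories.

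The equivalence with ${_{\underline{A\# H}}\Mm}$ is then immediate from \thref{Hparisosmash}: restriction of scalars along the algebra isomorphism $H_{par}\cong\underline{A\# H}$ is an isomorphism, in particular an equivalence, of the associated categories of left modules. For the remaining monoidal statement I would invoke the standard theory of left bialgebroids. Using the structure $(H_{par},A,s,t,\ud_l,\ue_l)$ established above, every left $H_{par}$-module $M$ acquires an $A$-bimodule structure via $a\triangleright m\triangleleft b=s(a)t(b)m$; for modules $M,N$ the bimodule $M\otimes_A N$ carries the diagonal action $h\cdot(m\otimes_A n)=h_{(1)}\cdot m\otimes_A h_{(2)}\cdot n$, which is well defined exactly because $\mathrm{Im}(\ud_l)$ lies in the Takeuchi product $H_{par}\times_A H_{par}$ (already checked in the construction of $\ud_l$), while $A$ is the unit object with action through $\ue_l$. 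By Schauenburg's theorem this makes ${_{H_{par}}\Mm}$ monoidal with the forgetful functor $U:{_{H_{par}}\Mm}\to{_A\Mm_A}$ strict monoidal; transporting along the isomorphism of the first step gives the desired $U$ on ${_H\Mm^{par}}$.

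Finally, the antipode $\mathcal{S}$ upgrades this to a closed structure. Recall that $({_A\Mm_A},\otimes_A,A)$ is already closed, the internal hom of $M,N$ being the space $\underline{\Hom}(M,N)=\Hom_{-A}(M,N)$ of right $A$-linear maps with bimodule structure $(a\cdot f\cdot b)(m)=a\,f(bm)$. I would lift this to a left $H_{par}$-action by the Hopf-algebra-style formula $(h\cdot f)(m)=h_{(1)}\cdot f(\mathcal{S}(h_{(2)})\cdot m)$, so that forgetting the action leaves the underlying bimodule $\underline{\Hom}(M,N)$ untouched and $U$ preserves internal homs. The hard part will be checking that this formula is compatible with $\otimes_A$ and genuinely defines an $H_{par}$-action: this is where the full Hopf-algebroid axioms (anti-multiplicativity of $\mathcal{S}$ and its compatibility with $\ud_l$, $\ue_l$, $s$, $t$), rather than merely the bialgebroid ones, are needed, the invertibility of the antipode guaranteeing the two-sided closedness. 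Alternatively, one may simply cite the general result that the category of modules over a Hopf algebroid with bijective antipode is closed monoidal and that the forgetful functor to the base bimodule category preserves internal homs.
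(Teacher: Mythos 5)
Your proposal is correct and follows essentially the same route as the paper: the first isomorphism from the universal property of $H_{par}$ (\thref{univHpar}), the second from the algebra isomorphism $H_{par}\cong\underline{A\# H}$ (\thref{Hparisosmash}), and the monoidal/closed structure from the general theory of modules over a Hopf algebroid with strict monoidal forgetful functor to ${_A\Mm_A}$. The paper's proof is in fact terser than yours --- it simply cites these three facts without attempting an explicit internal-hom formula --- so your fallback of invoking the general result for Hopf algebroids with bijective antipode is exactly what the authors do.
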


\begin{proof}
The first isomorphism follows directly from \thref{univHpar}. The second statement follows from the isomorphism $H_{par}\cong \underline{A\# H}$, proven in \thref{Hparisosmash}. For the last statement, remember that $H_{par}$ is an $A$-Hopf algebroid. Hence the category of $H_{par}$-modules is monoidal with a strict monoidal forgetful functor to $A$-bimodules. By the above equivalences of categories, the statement follows.
\end{proof}

Let us make the obtained structure on the category of partial modules as in the previous corollary a bit more explicit.
Firstly, to identify the category of partial modules over $H$ as the category of modules over $\underline{A\# H}$, we put
\[
h\bullet m=[h]\triangleright m = (\um \# h)\triangleright m.
\]
At first sight, it seems like the category of partial $H$ modules would not coincide with the category of $\underline{A\# H}$ modules, but no extra structure in fact is needed, because, for any elements $(a\# h) \in \underline{A\# H}$, where $a=\varepsilon_{k^1}\ldots \varepsilon_{k^n}$, and $m\in M$ we have 
\beqnast
(a\# h)\triangleright m & = &  (\varepsilon_{k^1}\ldots \varepsilon_{k^n} \# h)\triangleright m\\
& = & (\varepsilon_{k^1}\# 1_H )\ldots (\varepsilon_{k^n}\# 1_H )(\um \# h)\triangleright m \\
& = & (\um \# k^1_{(1)})(\um \# S(k^1_{(2)}))\ldots  (\um \# k^n_{(1)})(\um \# S(k^n_{(2)}))(\um \# h) \triangleright m \\
& = & k^1_{(1)}\bullet (S(k^1_{(2)}) \bullet (\cdots ( k^n_{(1)}\bullet (S(k^n_{(2)}) \bullet (h\bullet m))) \cdots ) ) .
\eqnast
We can also conclude that every left partial $H$ module is automatically an $A$ bimodule, by the obvious left structure, 
\[
am = s(a)\triangleright m=(a\# 1_H )\triangleright m ,
\]
and a more involved right structure:
\[
ma= t(a)\triangleright m.
\]
More explicitly, if $a=\ve_{h^1}\ldots \ve_{h^n}$, then $t(a)=\te_{S^{-1}(h^n)} \ldots \te_{S^{-1}(h^1)}$, or in terms of smash product elements
\[
t(a)=(\um \# h^n_{(2)})(\um \# S^{-1}(h^n_{(1)}))\ldots (\um \# h^1_{(2)})(\um \# S^{-1}(h^1_{(1)})) .
\]

Concerning the monoidal structure on the category of partial $H$-modules, recall that the unit object is the base algebra $A\subset H_{par}$ with partial action given by the counit of $\underline{A\# H}$
\beqnast
h\bullet a & = & (\um \# h)\triangleright a \\
& = & \underline{\epsilon} ((\um \# h)(a\# 1_H ))\\
& = & \underline{\epsilon} ((h_{(1)}\cdot a)\#h_{(2)})\\
& = & (h_{(1)}\cdot a)(h_{(2)}\cdot \um )\\
& = & (h\cdot a)
\eqnast

Given two objects, $M,N\in {}_H \mathcal{M}^{par}$, the action of an element $h\in H$ on an element $m\otimes_A n \in M\otimes_A N$ is defined by the comultiplication of $\underline{A\# H}$,
\[
h\bullet (m\otimes_A n) =  (h_{(1)} \bullet m) \otimes_A (h_{(2)} \bullet n)
\]

\begin{lemma} \label{algebraobjects1} Let $B$ be an algebra object in the monoidal category ${}_{H}\mathcal{M}^{par}$, then $H$ acts partially on $B$ with a symmetric action.
\end{lemma}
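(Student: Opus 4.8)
The plan is to endow $B$ with the map $h\cdot b:=h\bullet b$, where $\bullet$ is the partial $H$-module structure underlying $B$ as an object of ${}_H\Mm^{par}$, equivalently the partial representation $\pi:H\to\End(B)$, $\pi(h)(b)=h\bullet b$, and to check the four axioms (\actref{partact1})--(\actref{partact3b}) one by one. Axiom (\actref{partact1}) is just $\pi(1_H)=\mathrm{id}_B$, i.e.\ [PM1]. For (\actref{partact2}) I would use that the multiplication $\mu:B\ot_A B\to B$ is a morphism in ${}_H\Mm^{par}$, hence intertwines the $H_{par}$-actions; together with the comultiplication-induced formula $h\bullet(m\ot_A n)=(h_{(1)}\bullet m)\ot_A(h_{(2)}\bullet n)$ this gives $h\bullet(bb')=\mu\bigl(h\bullet(b\ot_A b')\bigr)=(h_{(1)}\bullet b)(h_{(2)}\bullet b')$.

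The core of the proof is to convert the abstract algebra-object data into two explicit conjugation identities. First I would note that, since the unit $u:A\to B$ is a morphism in ${}_H\Mm^{par}$ and the $H$-action on the monoidal unit $A$ is $h\bullet a=h\cdot a$, one has
\[
h\bullet 1_B=h\bullet u(1_A)=u(h\bullet 1_A)=u(h\cdot 1_A)=u(\varepsilon_h),
\]
using $h\cdot 1_A=\varepsilon_{h_{(1)}}\varepsilon_{h_{(2)}}=\varepsilon_h$. Next, the left and right unit constraints of the algebra object $B$ identify multiplication by $u(a)$ with the source and target $A$-actions on the module $B$, that is $u(a)\,b=s(a)\triangleright b$ and $b\,u(a)=t(a)\triangleright b$. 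Evaluating on $a=\varepsilon_h$, using $s(\varepsilon_h)=\varepsilon_h$ and $t(\varepsilon_h)=[h_{(2)}][S^{-1}(h_{(1)})]$ together with the formula for the $\underline{A\# H}$-action on a partial module recorded earlier, I obtain
\[
(h\bullet 1_B)\,b=h_{(1)}\bullet\bigl(S(h_{(2)})\bullet b\bigr),\qquad b\,(h\bullet 1_B)=h_{(2)}\bullet\bigl(S^{-1}(h_{(1)})\bullet b\bigr).
\]

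Granting these, (\actref{partact3}) and (\actref{partact3b}) are pure Sweedler bookkeeping. For (\actref{partact3}) I would feed the first relation into the right-hand side and use coassociativity to get
\[
(h_{(1)}\bullet 1_B)(h_{(2)}k\bullet b)=h_{(1)}\bullet\bigl(S(h_{(2)})\bullet(h_{(3)}k\bullet b)\bigr);
\]
rewriting $S(h_{(2)})\bullet(h_{(3)}k\bullet b)=S(h_{(2)})\bullet(h_{(3)}\bullet(k\bullet b))$ by (\repref{partialrep5}) and then collapsing via the identity $\pi(h_{(1)})\pi(S(h_{(2)}))\pi(h_{(3)})=\pi(h)$ of \eqref{key2} yields $h\bullet(k\bullet b)$. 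Axiom (\actref{partact3b}) is handled symmetrically from the second relation, using the $S^{-1}$-variant of (\repref{partialrep3}) from the Remark following Definition~\ref{partialrep} and the identity $\pi(h_{(3)})\pi(S^{-1}(h_{(2)}))\pi(h_{(1)})=\pi(h)$ of \eqref{key3}. The main obstacle I anticipate is establishing the two conjugation identities: this is where the monoidal unit constraints, the Hopf-algebroid source/target maps $s,t$ (the target involving $S^{-1}$, so invertible antipode is indispensable at this point), and the explicit partial action must be matched up correctly; once they are in place the remaining verifications are formal.
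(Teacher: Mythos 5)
Your proposal is correct and follows essentially the same route as the paper: both arguments obtain (PA2) from the multiplication being a morphism of $H_{par}$-modules, and both reduce (PA3)/(PA4) to the observation that left and right multiplication by $h\bullet 1_B=\eta(\varepsilon_h)$ coincide with the $s(\varepsilon_h)$- and $t(\varepsilon_h)$-actions supplied by the unit map and the $A$-bilinearity of the product. The only cosmetic difference is that the paper performs the final collapse by computing inside $\underline{A\# H}$, whereas you collapse the resulting triple products via (\repref{partialrep3}), (\repref{partialrep5}) and the identities (\ref{key2}), (\ref{key3}) directly in terms of the partial representation.
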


\begin{proof} First, let us verify that $1_H \bullet x =x$ for any $x\in B$.
\[
1_H \bullet x =(\um \# 1_H )\triangleright x =1_{\underline{A\# H}} \triangleright x =x .
\]
Then, the multiplicativity of the partial action comes naturally  that $B$ is an algebra object in ${}_{\underline{A\# H}} \mathcal{M}$, therefore, the multiplication in $B$ is a module map
\[
h\bullet (xy)=(\um \# h)\triangleright (xy)=((\um \# h_{(1)})\triangleright x)
((\um \# h_{(2)})\triangleright y)=(h_{(1)}\bullet x)(h_{(2)}\bullet y).
\]
Finally, by the property of the unit map 
$\eta \in {}_{\underline{A\# H}}\mbox{Hom} (A,B)$, we have
\beqnast
(h\bullet 1_B)x  & = & (h\bullet \eta (\um ))x = \eta (h\cdot \um )x \\
& = & \eta ( \ve_h )x =\ve_h \eta (\um )x \\
& = & \ve_h 1_Bx \\
& = & \ve_h x ,
\eqnast
where the last equality follows from the fact that the multiplication in $B$ is a morphism of $A$ bimodules.

Therefore, for $h,k\in H$ and $x\in B$, we have
\beqnast
h\bullet (k\bullet x) & = & (\um \# h)\triangleright ((\um \# k)\triangleright x) \\
& = & ((\um \# h)(\um \# k)) \triangleright x \\
& = & ((h_{(1)}\cdot \um )\# h_{(2)}k)\triangleright x \\
& = & (((h_{(1)}\cdot \um )\# 1_H)(\um \# h_{(2)}k))\triangleright x \\
& = & ((h_{(1)}\cdot \um )\# 1_H)\triangleright ((\um \# h_{(2)}k)\triangleright x) \\
& = & \ve_{h_{(1)}} (h_{(2)}k\bullet x)\\
& = & (h_{(1)}\bullet 1_B )(h_{(2)}k \bullet x)
\eqnast

On the other hand
\beqnast
(h_{(1)}k\bullet x )(h_{(2)} \bullet \um ) & = & 
(h_{(1)}k\bullet x ) (\ve_{h_{(2)}}1_B ) \\
& = & ((h_{(1)}k\bullet x )\ve_{h_{(2)}})1_B \\
& = & t(\ve_{h_{(2)}}) (h_{(1)}k\bullet x )\\
& = & (\um \# h_{(3)})(\um \# S^{-1} (h_{(2)})) \triangleright 
( (\um \# h_{(1)}k)\triangleright x )\\
& = &  (\um \# h_{(3)})(\um \# S^{-1} (h_{(2)}))(\um \# h_{(1)}k)\triangleright x \\
& = & (\um \# h_{(4)})((S^{-1} (h_{(3)})\cdot \um )\# S^{-1} (h_{(2)})h_{(1)}k)\triangleright x\\
& = & (\um \# h_{(2)})((S^{-1} (h_{(1)})\cdot \um )\# k)\triangleright x\\
& = & ( (h_{(2)}\cdot (S^{-1} (h_{(1)})\cdot \um ))\# h_{(3)}k)\triangleright x\\
& = & ( (h_{(2)}S^{-1} (h_{(1)})\cdot \um )(h_{(3)}\cdot \um )\# h_{(4)}k)\triangleright x\\
& = & ( (h_{(1)}\cdot \um )\# h_{(2)}k)\triangleright x\\
& = & (\um \# h)(\um \# k)\triangleright x\\
& = & h\bullet (k\bullet x))
\eqnast
for any $h,k\in H$ and for any $x\in B$. Therefore, the action of $H$ on $B$ is symmetric. This completes the proof.
\end{proof}

It is also easy to see that an algebra object $B\in {}_H \mathcal{M}^{par}$ has also a 
$k$ algebra structure, with multiplication $\mu_0 :B\otimes_k B \rightarrow B$ and unit $\eta_0 :k\rightarrow B$ given, respectively by $\mu_0= \mu_B \circ \pi$ and $\eta_0 =\eta_B \circ \eta_A$, where $\pi :B\otimes_k B\rightarrow B\otimes_A B$ is the canonical projection and $\eta_A$ is the unit map of the base algebra $A$. 

\begin{lemma} \label{algebraobjects2} Let $B$ be a $k$-algebra on which $H$ acts partially with a symmetric action. Then $B$ is an algebra object in the monoidal category 
$ ( {}_H \mathcal{M}^{par} , \otimes_A , A)$
\end{lemma}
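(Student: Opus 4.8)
The plan is to reverse the argument of Lemma~\ref{algebraobjects1}: starting from the symmetric partial action, I will manufacture the two structure morphisms of an algebra object in $({}_H\Mm^{par},\otimes_A,A)$ and check the required compatibilities. First, since the action is symmetric, \exref{partact} gives that $\pi(h)(x)=h\cdot x$ is a partial representation, so $(B,\pi)$ is an object of ${}_H\Mm^{par}$, equivalently a left $\underline{A\# H}$-module. It remains to produce a unit morphism $\eta:A\to B$ and a multiplication $\mu:B\otimes_A B\to B$, to verify that both are morphisms in ${}_H\Mm^{par}$, and to check the associativity and unit axioms of an algebra object.

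For the unit I would define $u:A\to B$ by $u(\ve_k)=k\cdot 1_B$. To see this is a well-defined algebra map I invoke the universal property of $A=A(H_{par})$ from \thref{Auniv}: it suffices that the linear map $e_B(h)=h\cdot 1_B$ satisfies \eqref{A(Hpar).eqn1} and \eqref{A(Hpar).eqn2}. The first is \actref{partact2} with $a=b=1_B$. The second holds because both $(h_{(1)}\cdot 1_B)(h_{(2)}k\cdot 1_B)$ and $(h_{(1)}k\cdot 1_B)(h_{(2)}\cdot 1_B)$ are equal to $h\cdot(k\cdot 1_B)$, by \actref{partact3} and \actref{partact3b} respectively; here symmetry is already used. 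Next I would check that the abstract $A$-bimodule structure on $B$ coming from $s,t$ agrees with multiplication by $u(a)$: on the left, $\ve_k\triangleright x=s(\ve_k)\triangleright x=k_{(1)}\cdot(S(k_{(2)})\cdot x)=(k\cdot 1_B)x$ by \actref{partact3}; on the right, expanding $t(\ve_k)=(1_A\# k_{(2)})(1_A\# S^{-1}(k_{(1)}))$ gives $x\triangleleft\ve_k=k_{(2)}\cdot(S^{-1}(k_{(1)})\cdot x)=x(k\cdot 1_B)$, using \actref{partact3b} together with the antipode identity $\sum k_{(2)}S^{-1}(k_{(1)})\otimes k_{(3)}=1_H\otimes k$.

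The multiplication is then immediate: since $A$ acts by multiplication by $u(a)$, the $k$-algebra product $\mu_B$ is $A$-balanced by associativity of $B$, hence descends to $\mu:B\otimes_A B\to B$, which is automatically an $A$-bimodule map. That $\mu$ is a morphism of partial $H$-modules is precisely \actref{partact2}: recalling that the $H$-action on a tensor product is $h\bul(x\otimes_A y)=(h_{(1)}\bul x)\otimes_A(h_{(2)}\bul y)$, we get $\mu(h\bul(x\otimes_A y))=(h_{(1)}\cdot x)(h_{(2)}\cdot y)=h\cdot(xy)=h\bul\mu(x\otimes_A y)$. For the unit $\eta=u$, I use that $A$ carries the action $h\bul a=h\cdot a$ and that $h\cdot\ve_k=\ve_{h_{(1)}k}\ve_{h_{(2)}}$, so $u(h\cdot\ve_k)=(h_{(1)}k\cdot 1_B)(h_{(2)}\cdot 1_B)=h\cdot(k\cdot 1_B)=h\bul u(\ve_k)$, again by \actref{partact3b}.

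Finally, associativity of $\mu$ and the unit triangles are inherited from the associativity and unitality of $B$ as a $k$-algebra (noting $u(1_A)=1_H\cdot 1_B=1_B$), so all algebra-object axioms hold. I expect the main obstacle to be exactly the two places where the symmetry hypothesis is indispensable: the relation \eqref{A(Hpar).eqn2} needed for $u$ to be well defined, and the compatibility $u(h\cdot\ve_k)=h\bul u(\ve_k)$ needed for $\eta$ to be an $H$-module morphism. Both rest on the single identity $h\cdot(k\cdot 1_B)=(h_{(1)}k\cdot 1_B)(h_{(2)}\cdot 1_B)$, i.e.\ \actref{partact3b}; without symmetry the right $A$-action and the unit map would not be compatible with the $H$-action. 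The secondary technical point is matching the $s,t$-bimodule structure with concrete multiplication by $u(a)$, which uses the smash-product descriptions recorded in the discussion preceding Lemma~\ref{algebraobjects1}.
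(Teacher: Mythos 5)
Your proof is correct and follows essentially the same route as the paper: both identify the left and right $A$-actions on $B$ with multiplication by $h\cdot 1_B$ via (PA3) and (PA3b), deduce $A$-balancedness of the product from associativity, and reduce the $H$-linearity of the unit to the identity $h\cdot(k\cdot 1_B)=(h_{(1)}k\cdot 1_B)(h_{(2)}\cdot 1_B)$, which is exactly where symmetry enters. The only (harmless) variation is that you build the unit map through the universal property of $A$ from \thref{Auniv}, whereas the paper simply sets $\tilde\eta(a)=a1_B$ using the $A$-module structure $B$ already carries as an $\underline{A\# H}$-module, sidestepping any well-definedness question.
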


\begin{proof} First, we need to show that $B$ is an $A$ algebra, that is, it is an algebra with respect to the tensor product over $A$. The structure of $A$ bimodule on $B$ is defined naturally by the partial action, in fact, given $h\in H$ and $x\in B$ we have
\[
\varepsilon_h x = h_{(1)}\bullet (S(h_{(2)}) \bullet x) =(h\bullet 1_B )x
\]
and
\[
x\ve_h = h_{(2)} \bullet (S^{-1} (h_{(1)})\bullet x =x(h\bullet 1_B ),
\]
in particular $\ve_h 1_B = 1_B \ve_h$ for all $h\in H$.

The multiplication is a morphism of $A$ bimodules:
\beqnast
\ve_h (xy) & = &  h_{(1)}\bullet (S(h_{(2)}) \bullet (xy)) \\
& = & (h_{(1)}\bullet 1_B )(h_{(2)}S(h_{(3)}) \bullet (xy))\\
& = & (h\bullet 1_B )(xy)\\
& = & ((h\bullet 1_B )x)y\\
& =& (\ve_h x)y ,
\eqnast
and 
\beqnast
(xy)\ve_h & = &  h_{(2)}\bullet (S^{-1}(h_{(1)}) \bullet (xy)) \\
& = & (h_{(2)}S^{-1}(h_{(1)}) \bullet (xy))(h_{(3)}\bullet 1_B )\\
& = & (xy)(h\bullet 1_B )\\
& = & x(y(h\bullet 1_B ))\\
& = & x(y\ve_h ),
\eqnast

We must to verify that the multiplication map in $B$ is balanced with relation to $A$, that is, given $x,y \in B$ and $h \in H$, then $(x\blacktriangleleft \varepsilon_h )y = 
x(\varepsilon_H \blacktriangleright y)$. Indeed
\beqnast
(x\varepsilon_h )y & = & (h_{(2)} \bullet (S^{-1}(h_{(1)})\bullet x))y \\
& = &  (h_{(2)} S^{-1}(h_{(1)})\bullet x)(h_{(3)}\bullet 1_B )y \\
& = &  x(\ve_h 1_B )y \\
& = & x(\varepsilon_h y) .
\eqnast

Now, define the unit map 
\[
\begin{array}{rccc}
\tilde{\eta} : & A & \rightarrow & B \\
\,             & a & \mapsto     & a 1_B
\end{array}
\]
which clearly is an $A$ bimodule map, and also satisfies the unit axiom, proving that $B$ is an $A$ algebra.

Next, one needs to show that the multiplication and the unit maps are morphisms of partial $H$ modules. The multiplicativity of the action comes from the hypothesis the $H$ acts partially on $B$. Finally, for the unit map, as it is already an $A$ bimodule map, one needs only to prove that $\widehat{\eta} (h\cdot \um ) =h\bullet 1_B$, indeed
\beqnast
\tilde{\eta}(h\cdot \um ) & = & \tilde{\eta} (\ve_h ) =  \varepsilon_h  1_B \\
& = & (h\bullet 1_B )1_B = h\bullet 1_B.
\eqnast
Therefore, $B$ is an algebra object in ${}_H \mathcal{M}^{par}$ with the monoidal structure given by the tensor product over $A$.
\end{proof}

The results proved in the Lemmas \ref{algebraobjects1} and \ref{algebraobjects2} lead immediately to the following.

\begin{teo}
There is an isomorphism  between the categories of symmetric partial $H$-module algebras and the algebra objects in the
category of partial $H$-modules, i.e. $\ParAct_H\cong \Alg({_{H_{par}}}\Mm)$.
\end{teo}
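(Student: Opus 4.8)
The plan is to package Lemmas~\ref{algebraobjects1} and~\ref{algebraobjects2} into a pair of mutually inverse functors and then verify that they match up morphisms as well. On objects, one direction $\Psi:\Alg({}_{H_{par}}\Mm)\to \ParAct_H$ sends an algebra object $(B,\mu,\eta)$ to the underlying $k$-algebra $(B,\mu_0)$ with $\mu_0=\mu\circ\mathrm{pr}$, where $\mathrm{pr}:B\otimes_k B\to B\otimes_A B$ is the canonical projection and the unit is $\eta\circ\eta_A$ (as described after Lemma~\ref{algebraobjects1}), equipped with the symmetric partial action $h\bul x=(\um\# h)\triangleright x$ produced by Lemma~\ref{algebraobjects1}. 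The other direction $\Phi:\ParAct_H\to\Alg({}_{H_{par}}\Mm)$ sends a symmetric partial $H$-module algebra to the algebra object over $A$ built in Lemma~\ref{algebraobjects2}. The bulk of the work is to check that $\Phi$ and $\Psi$ are mutually inverse on objects; functoriality (preservation of composition and identities) will then be immediate, since both functors are the identity on the underlying linear maps.

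First I would check $\Phi\circ\Psi=\mathrm{id}$ and $\Psi\circ\Phi=\mathrm{id}$ on objects. Both functors leave the underlying $H_{par}$-module (equivalently, the partial representation $\pi:H\to\End_k(B)$) untouched, since in each lemma the partial action is exactly $h\bul x=(\um\# h)\triangleright x$; hence the partial action is preserved in both composites. The key point is the compatibility of the two descriptions of the $A$-bimodule structure: the Hopf-algebroid one $a\triangleright x\triangleleft b=s(a)t(b)\triangleright x$, and the one reconstructed in Lemma~\ref{algebraobjects2} via $\ve_h x=h_{(1)}\bul(S(h_{(2)})\bul x)$ and $x\ve_h=h_{(2)}\bul(S^{-1}(h_{(1)})\bul x)$. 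These agree because $s(\ve_h)=\ve_h$ and $t(\ve_h)=\te_{S^{-1}(h)}$, so that $s(\ve_h)\triangleright x=[h_{(1)}][S(h_{(2)})]\triangleright x=h_{(1)}\bul(S(h_{(2)})\bul x)$, and symmetrically for $t$. Once the $A$-bimodule structures are identified, the multiplication over $A$ is recovered unambiguously: $\mu_0=\mu\circ\mathrm{pr}$ is $A$-balanced (which is exactly what Lemma~\ref{algebraobjects2} verifies), and since $\mathrm{pr}:B\otimes_k B\to B\otimes_A B$ is surjective, descending $\mu_0$ returns $\mu$, while conversely precomposing $\mu$ with $\mathrm{pr}$ returns $\mu_0$. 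The unit maps correspond through $\eta(\ve_h)=\ve_h 1_B=h\bul 1_B$, as in the last computation of Lemma~\ref{algebraobjects2}.

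Next I would show that, once objects are identified, the two categories have literally the same hom-sets. A morphism in $\ParAct_H$ is a unital $k$-algebra map $f:B\to B'$ with $f(h\cdot x)=h\cdot f(x)$. The equivariance condition is equivalent to $H_{par}$-linearity of $f$: since $H_{par}$ is generated as an algebra by the symbols $[h]$, which act through $h\bul(-)$, intertwining every $h\bul(-)$ is the same as being a module map over $H_{par}$, and in particular $f$ is then $A$-bilinear. Given this, multiplicativity of $f$ for $\mu_0$ together with $A$-bilinearity says exactly that $f$ descends to $B\otimes_A B$ and commutes with $\mu$, i.e.\ $f\circ\mu=\mu'\circ(f\otimes_A f)$, while unitality is $f\circ\eta=\eta'$. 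Thus $f$ is a morphism in $\ParAct_H$ if and only if it is a morphism of algebra objects in ${}_{H_{par}}\Mm$, so $\Phi$ and $\Psi$ are identities on morphisms. This yields the desired isomorphism of categories $\ParAct_H\cong\Alg({}_{H_{par}}\Mm)$.

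The main obstacle is precisely the bookkeeping of the second paragraph: reconciling the partial-action description of the $A$-action with the Hopf-algebroid source/target description, and then confirming that passing between the $k$-algebra product $\mu_0$ and the $\otimes_A$-product $\mu$ neither loses nor adds information. Everything else is formal once the $A$-bimodule structures are seen to coincide; the surjectivity of $\mathrm{pr}$ and the balancedness established in Lemma~\ref{algebraobjects2} are what make the passage $\mu_0\leftrightarrow\mu$ a genuine bijection rather than merely a one-sided relation.
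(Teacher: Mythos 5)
Your proposal is correct and follows the same route as the paper, which derives the theorem directly from Lemmas~\ref{algebraobjects1} and~\ref{algebraobjects2}; you simply make explicit the bookkeeping the paper leaves implicit (the agreement of the two $A$-bimodule structures via $s(\ve_h)=\ve_h$, $t(\ve_h)=\te_{S^{-1}(h)}$, the bijection $\mu_0\leftrightarrow\mu$ through the surjection $B\otimes_k B\to B\otimes_A B$, and the identification of $H$-equivariance with $H_{par}$-linearity since $H_{par}$ is generated by the $[h]$). No gaps.
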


This theorem has the following immediate consequence, that answers the question of \reref{freefunctor}

\begin{cor}
The functor $\Pi$ from \thref{Pi} coincides exactly with the forgetful functor $\Alg({_{H_{par}}}\Mm)\to
{}_{H_{par}}\Mm$. As a consequence, this functor has a left adjoint, that takes the free algebra on an $H_{par}$-module.
\end{cor}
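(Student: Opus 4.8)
The plan is to prove the two assertions in turn: first, that under the identifications $\ParAct_H\cong \Alg({_{H_{par}}}\Mm)$ (the preceding theorem) and ${_H\Mm^{par}}\cong {_{H_{par}}\Mm}$ the functor $\Pi$ becomes the forgetful functor $\Alg({_{H_{par}}}\Mm)\to {_{H_{par}}\Mm}$; and second, to deduce the existence of a left adjoint from the general construction of free monoids in a monoidal category. The only subtlety in the first part is that $\Pi$ is literally valued in $\ParRep_H$, assigning to $(A,\cdot)$ the partial representation $(\End_k(A),\pi)$; but by the very definition of a partial module, such a partial representation on $\End_k(A)$ \emph{is} the datum of a partial $H$-module structure on the space $A$, so $\Pi$ may canonically be regarded as valued in ${_H\Mm^{par}}\cong {_{H_{par}}\Mm}$.

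For the first claim I would simply track the underlying module structure through both identifications and check that they agree. On one side, $\Pi(A,\cdot)=(\End_k(A),\pi)$ with $\pi(h)(a)=h\cdot a$; under the Hom-tensor relation this partial representation is precisely the partial $H$-module structure $h\bullet a=\pi(h)(a)=h\cdot a$ on $A$, which under ${_H\Mm^{par}}\cong {_{H_{par}}\Mm}$ corresponds to the $H_{par}$-module $A$ with $[h]\triangleright a=h\cdot a$. On the other side, under $\ParAct_H\cong \Alg({_{H_{par}}}\Mm)$ the symmetric partial module algebra $(A,\cdot)$ is sent, by the construction of Lemma~\ref{algebraobjects2}, to the algebra object $A$ whose underlying $H_{par}$-action is $(\um \# h)\triangleright a = h\bullet a = h\cdot a$, exactly as computed in the discussion preceding Lemma~\ref{algebraobjects1}. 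Thus the $H_{par}$-module obtained by forgetting the algebra structure coincides with $\Pi(A,\cdot)$; and since every identification in play acts as the identity on underlying linear maps, compatibility with morphisms is immediate. This yields the asserted equality of functors.

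For the second claim it then suffices to invoke the standard fact that the forgetful functor from the algebra objects (monoids) of a monoidal category to the category itself admits a left adjoint whenever the category is cocomplete and its tensor product is cocontinuous in each variable, the adjoint being the free-monoid functor $M\mapsto \bigoplus_{n\geq 0}M^{\otimes_A n}$. Here ${_{H_{par}}\Mm}$ is cocomplete, being a category of modules over a $k$-algebra, and its monoidal product $\otimes_A$ is cocontinuous in each variable: colimits are computed on underlying $A$-bimodules, where the forgetful functor $U:{_H\Mm^{par}}\to {_A\Mm_A}$ is strict monoidal, and $-\otimes_A N$ is a left adjoint on $A$-bimodules and hence preserves colimits. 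Applying the general result produces the left adjoint of $\Pi$, sending an $H_{par}$-module $M$ to $\bigoplus_{n\geq 0}M^{\otimes_A n}$ with its induced algebra-object structure---the ``free partial action'' on $M$ announced in \reref{freefunctor}. The main obstacle is precisely confirming that $\otimes_A$ is genuinely cocontinuous in this relative, algebroid setting, so that the free-monoid construction goes through; once cocompleteness and this cocontinuity are secured, the existence of the adjoint is formal.
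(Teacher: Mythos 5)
Your proposal is correct and fills in precisely the argument the paper leaves implicit: the paper states this corollary without proof, as an immediate consequence of the isomorphism $\ParAct_H\cong \Alg({_{H_{par}}}\Mm)$ together with ${_H\Mm^{par}}\cong{_{H_{par}}\Mm}$, and your identification of $\Pi$ with the forgetful functor (tracking the underlying $H_{par}$-action $[h]\triangleright a=h\cdot a$ through both identifications, after regarding the partial representation on $\End_k(A)$ as a partial module structure on $A$) is exactly the intended reasoning. Your justification of the left adjoint via the free-monoid construction $M\mapsto\bigoplus_{n\geq 0}M^{\otimes_A n}$, using cocompleteness of ${_{H_{par}}\Mm}$ and cocontinuity of $\otimes_A$ in each variable, is likewise correct and supplies a verification that the paper omits entirely.
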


\begin{exmp} Let $G$ be a group and $kG$ be its group algebra. Because $kG$ is a cocommutative Hopf algebra, we can describe its category of partial modules using the above described construction. In this case, the subalgebra $A\subseteq k_{par} G$ is commutative, then $A\cong \tilde{A}$, $\te_h =\ve_h^{-1}$ and the morphism $\mathcal{S}'$ is the identity map on $A$. The partial smash product $\underline{A\# kG}$ coincides with the partial crossed product $A\rtimes_{\alpha} G$, presented in the first section. The partial $kG$ module structure is given by
\[
\delta_g \bullet m =(\varepsilon_g \delta_g )\triangleright m .
\]
It is easy to see that, given a left partial $kG$ module $M$, on can define a partial group action of $G$ on the $k$ module $M$ by partially defined $k$ linear homomorphisms. This partial action is given by a family $\{ X_g \}_{g\in G}$ of $k$ submodules of $M$ and a family $\{ \theta_g :X_{g^{-1}}\rightarrow X_g \}_{g\in G}$ of $k$ linear isomorphisms satisfying
\begin{enumerate}
\item[(a)] $X_e =M$ and $\theta_e =\mbox{Id}_M$.
\item[(b)] $\theta_g (X_{g^{-1}}\cap X_h )=X_g \cap X_{gh}$.
\item[(c)] For any $m\in \theta_{h^{-1}} (X_{g^{-1}}\cap X_h )$, we have $\theta_g (\theta_h (x))=\theta_{gh} (x)$.
\end{enumerate}
The submodules $X_g$ are defined from the projections $P_g :M\rightarrow M$ given by
\[
P_g (m) =\delta_g \bullet (\delta_{g^{-1}} \bullet m)=(\varepsilon_g\delta_e )\triangleright m .
\]
It is easy to see that the linear operators $P_g$ are projections and that for any $g,h\in G$, $P_g$ and $P_h$ commute.

The isomorphisms $\theta_g$ are simply the action of $\delta_g$ restricted to $X_{g^{-1}}$, that is
\[
\theta_g (P_{g^{-1}} (m))=\delta_g \bullet (P_{g^{-1}}(m)) .
\]
It is straightforward to verify that the image of $\theta_g$ is in $X_g$. The data 
$( \{ X_g \}_{g\in G} , \{ \theta_g \}_{g\in G} )$ indeed define a partial action of the group $G$ on  the $k$ module $M$ by partially defined $k$ isomorphisms.

On the other hand, given a partial action $( \{ X_g \}_{g\in G} , \{ \theta_g \}_{g\in G} )$ on a $k$-vector space $M$, define $P_g :M\rightarrow M$ as the linear projection over the subspace $X_g$. For each $g\in G$ and each $m\in M$ define
\[
\delta_g \bullet m =\theta_g (P_{g^{-1}} (m)) .
\]
It is easy to see that the map
\[
\begin{array}{rccc}
\pi : & kG                         & \rightarrow & \mbox{End}_k (M)\\
\,    & \sum_{g\in G} a_g \delta_g & \mapsto     & \sum_{g\in G} a_g \delta_g \bullet \underline{\quad}
\end{array}
\]
is a partial representation of $kG$, therefore, $M$ has a structure of a partial $kG$ module. 
\end{exmp}

In the previous example, the base field $k$ can have more than one structure of a partial $kG$ module.

\begin{prop} The structures of partial $kG$ modules on the base field $k$ are in one to one correspondence with the set of the pairs $(H,\lambda)$, in which $H\subseteq G$ is a subgroup and $\lambda :H \rightarrow k^*$ is a one dimensional representation of the subgroup $H$.
\end{prop}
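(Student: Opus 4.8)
The plan is to unwind the definition of a partial $kG$-module on the one-dimensional space $k$ into a system of scalar equations, and then to read off from it a subgroup together with a character. Since $\End_k(k)\cong k$, a partial $kG$-module structure on $k$ is exactly a partial representation $\pi$ of the group $G$ on the commutative algebra $k$; equivalently it is a function $a\colon G\to k$, $g\mapsto a_g:=\pi(g)$, subject to the three axioms of a group partial representation. Because $k$ is commutative these collapse to
\[
a_e=1,\qquad a_g a_h a_{h^{-1}}=a_{gh}a_{h^{-1}},\qquad a_{g^{-1}}a_g a_h=a_{g^{-1}}a_{gh},
\]
for all $g,h\in G$, so the whole problem is to classify such families $(a_g)_{g\in G}$.

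The decisive step is to analyse the idempotents $\varepsilon_g=a_g a_{g^{-1}}$. Setting $h=g^{-1}$ in the second and third relations and using $a_e=1$ gives $a_g(\varepsilon_g-1)=0$ and $a_{g^{-1}}(\varepsilon_g-1)=0$, whence $\varepsilon_g^2=\varepsilon_g$. Since $k$ is a field, and hence has no zero divisors, I obtain the clean dichotomy: either $\varepsilon_g=1$, in which case $a_g,a_{g^{-1}}\in k^*$ with $a_{g^{-1}}=a_g^{-1}$, or else $a_g=a_{g^{-1}}=0$. I then define $H=\{\,g\in G\mid a_g\neq 0\,\}$. Clearly $e\in H$, and $g\in H\Leftrightarrow g^{-1}\in H$. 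For closure I use the second relation: if $g,h\in H$ then $a_{h^{-1}}\neq 0$, so cancelling it in $a_g a_h a_{h^{-1}}=a_{gh}a_{h^{-1}}$ yields $a_{gh}=a_ga_h\neq 0$, so $gh\in H$. Thus $H$ is a subgroup and $\lambda:=a|_H\colon H\to k^*$ is a group homomorphism, i.e.\ a one-dimensional representation of $H$.

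Conversely, given a pair $(H,\lambda)$ I set $a_g=\lambda(g)$ for $g\in H$ and $a_g=0$ otherwise, and verify the three scalar axioms directly. The only point needing care is the second relation when exactly one of $g,h$ lies in $H$: if $h\in H$ but $g\notin H$, then $gh\notin H$ (otherwise $g=(gh)h^{-1}\in H$), so both sides vanish, and if $h\notin H$ then $a_{h^{-1}}=0$ annihilates both sides; the third relation is handled symmetrically. Finally I check that the assignments $(a_g)\mapsto(H,\lambda)$ and $(H,\lambda)\mapsto(a_g)$ are mutually inverse, which is immediate: the support of the reconstructed family is exactly $H$ because $\lambda$ takes values in $k^*$, and off $H$ the original family already vanishes by the dichotomy.

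I expect the main obstacle to be purely a matter of careful bookkeeping in the dichotomy step: one must extract from the bilinear-looking relations the statement that the support $H$ of $a$ is a subgroup and that $a$ is multiplicative there. Everything hinges on $k$ being an integral domain, so that the idempotents $\varepsilon_g$ are forced to be $0$ or $1$ and so that one may cancel the nonzero factors $a_{h^{-1}}$; over a general commutative ring the classification would be strictly larger, which is why the hypothesis that $k$ is a field is essential here.
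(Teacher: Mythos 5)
Your proof is correct and follows essentially the same route as the paper's: both identify the idempotent $\pi(g)\pi(g^{-1})$ (the paper's projection $P_g$, your $\varepsilon_g=a_ga_{g^{-1}}$) as being forced to be $0$ or $1$ in the field $k$, define $H$ as the support, and read off the character from the values of $\pi$ on $H$. The only difference is presentational — the paper passes through its dictionary between partial $kG$-modules and partial group actions (subspaces $k_g$ and isomorphisms $\theta_g$), whereas you work directly with the scalar relations; your version also spells out the converse verification that the paper merely asserts.
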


\begin{proof} First, suppose that $k$ is endowed with a $kG$ partial module structure. Then, there is a partial action of the group $G$ on $k$, given by subspaces $k_g \subseteq k$ and linear isomorphisms $\theta_g : k_{g^{-1}}\rightarrow k_g$. The subspaces $k_g$ for each $g\in G$ are the images of the projections $P_g \in \mbox{End}_k (k)\cong k$, therefore, there are two possibilities, either $P_g$ is the multiplication by $1$, and $k_g \cong k$, or $P_g$ is the multiplication by $0$, and $k_g =0$. Define
\[
H=\{ g \in G \, | \, k_g =k \} .
\]
As we have a partial action, then $k_e =k$, then $e\in H$. Also, if $g\in H$, as $\theta_{g^{-1}}:k_g \rightarrow k_{g^{-1}}$ is a $k$ linear isomorphism, then $k_{g^{-1}} \cong k$, which leads to the conclusion that $g^{-1}\in H$. Moreover, if $g,h\in H$ then $k_g$, $k_{g^{-1}}$, $k_h$ and $k_{h^{-1}}$ are all isomorphic to $k$, then
\[
k\cong \theta_g (k_{g^{-1}} \cap k_h )=k_g \cap k_{gh} \cong k\cap k_{gh} =k_{gh} ,
\]
which means that $gh\in H$. Therefore, $H$ is a subgroup of $G$. Finally, for $g,h\in H$, then we have that $\theta_g \circ \theta_h =\theta_{gh}$, by the partial action axiom. As $\mbox{End}_k (k) \cong k$ for each $g\in G$ there is a number $\lambda_g \in k$ such that the linear isomorphism $\theta_g$ is given by $\theta_g (x)=\lambda_g x$, and $\lambda_g \lambda_h =\lambda_{gh}$. By the fact that $\theta_g$ is an isomorphism, we have that $\lambda_g \in k^*$. Therefore, the map 
\[
\begin{array}{rccc} 
\lambda : & G & \rightarrow & k^* \\
\,       & g & \mapsto     & \lambda_g
\end{array}
\]
is a one dimensional linear representation of the group $H$.

On the other hand, given a subgroup $H\subseteq G$ and a one dimensional linear representation $\lambda :H\rightarrow k^*$, one can define a partial action of $G$ on $k$ by the following data: The subspaces $k_g$ are equal to $k$ for $g\in H$ and $k_g =0$ for $g\notin H$, and the isomorphisms $\theta_g :k_{g^{-1}}\rightarrow k_{g}$ given by, $\theta_g (x) =\lambda_g x$ for $g\in H$ and $\theta_g =0$ for $g\notin H$. This, indeed, constitutes a partial action of the group $G$ on the base field $k$. 
\end{proof}

\begin{exmp} For the case where $H=\mathcal{U}(\mathfrak{g})$, we have proved earlier that $H_{par} \cong H$, therefore ${}_{\mathcal{U}(\mathfrak{g})}\mathcal{M}^{par} = {}_{\mathcal{U}(\mathfrak{g})_{par}}\mathcal{M} \cong {}_{\mathcal{U}(\mathfrak{g})}\mathcal{M}$.
\end{exmp}

\section{Partial $G$-Gradings}

Recall the well-known fact that the category of $k$-vector spaces graded over a group $G$ is isomorphic to the category
of comodules over the associated group algebra $kG$. Similarly, a $G$-graded $k$-algebra is nothing else than a
$kG$-comodule algebra, i.e. an algebra object in the monoidal category of $kG$-comodules. A first example of a partially
graded algebra over a finite group $G$ was introduced in \cite{AB2}. In this case, the category of $kG$-comodules is
equivalent with the category of $kG^*$-modules, where $kG^*=k^G$ is the dual group algebra. Hence a partially $G$-graded
$k$-algebra can be introduced as an algebra in the category of partial $kG^*$-modules, or in other words as an algebra
that admits a partial action of $kG^*$. We give the explicit definition below, and will treat the case of partial
gradings by infinite groups in a later paper.

\subsection{Partially graded vector spaces and partially graded algebras}

\begin{defi} \delabel{def.partial.grading}
Let $G$ be a finite group, $k^G$ the dual group algebra. We denote by $p_g\in k^G$ the morphism such that $p_g(h)=\delta_{g,h}$, then $\{p_g, g\in G\}$ forms a canonical base for $k^G$ as a vector space. We say that a $k$-algebra $A$ admits a {\em partial $G$-grading} if the following axioms are fulfilled.
\begin{enumerate}
\item $\sum_{g \in G}  p_g \cdot a  = a$ for all $a \in A$ 
\item $p_g \cdot ab = \sum_{l \in G} (p_{gl^{-1}} \cdot a) (p_l \cdot b)$
\item $p_g \cdot ( p_t \cdot a)  =  (p_{gt^{-1}} \cdot 1_A) (p_t \cdot a) =(p_t \cdot a) (p_{t^{-1}g} \cdot 1_A)$
\end{enumerate}
\end{defi}

As said  before, the category of partial $k^G$-modules is basically the category of partially $G$ graded vector spaces.
Therefore, a partially $G$-graded vector space, is nothing else than a module over $(k^G)_{par}\cong
\underline{\hat{A}\# k^G}$, where $\hat{A}$ is the subalgebra of $(k^G)_{par}$ generated by elements of the form
\[
\varepsilon_{p_g} =\sum_{h\in G} [ p_h ][p_{g^{-1}h}] .
\]

Instead of dealing with partially $G$-graded spaces in this level of generality, it is more useful to present some more
concrete examples. More precisely, we will present examples of partial $G$-grading on the $k$-vector space $k^n$ (i.e.
structures of $k^G$-module on $k^n$) that are induced by a partial grading on the $k$-algebra
$\End_k(k^n)=M_n(k)$ (i.e. structures of partial $k^G$-actions on $M_n(k)$, as these imply partial representations on
$M_n(k)$). 

In what follows, $k$ is a field which has  characteristic coprime to $|G|$. First we consider the partial $G$-gradings
on the base field $k$, which are characterized by the following proposition.

\begin{prop}\label{partial.grading.k}\prlabel{partgradk} \cite{AAB}
There is a bijective correspondence between subgroups of $G$ and partial $G$-gradings of the base field $k$.
\end{prop}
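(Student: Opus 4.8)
The plan is to reduce everything to a finite family of scalars and then read off the group structure from the partial-action axioms. Since the base field $k$ is one-dimensional, any $k^G$-module structure is determined by its values on $1\in k$; concretely, a partial $G$-grading of $k$ amounts to a family of scalars $\lambda_g=p_g\cdot 1\in k$, with the action recovered by $p_g\cdot a=\lambda_g a$. First I would translate the three axioms of \deref{def.partial.grading}, evaluated on $a=b=1$, into the numerical conditions
\[
\sum_{g\in G}\lambda_g=1,\qquad \lambda_g=\sum_{l\in G}\lambda_{gl^{-1}}\lambda_l,\qquad \lambda_g\lambda_t=\lambda_{gt^{-1}}\lambda_t\quad(\forall g,t\in G).
\]
Thus the whole problem becomes the classification of scalar families satisfying these three relations.

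From such a family I would extract a subgroup as follows. Put $H=\{g\in G\mid \lambda_g\neq 0\}$. Setting $t=g$ in the third relation gives $\lambda_g^2=\lambda_e\lambda_g$, hence $\lambda_g(\lambda_g-\lambda_e)=0$, so each $\lambda_g$ is either $0$ or equal to $\lambda_e$. Since the $\lambda_g$ cannot all vanish (by the first relation), $\lambda_e\neq 0$, so $e\in H$ and $\lambda_g=\lambda_e=:c$ for every $g\in H$. The first relation then forces $|H|\,c=1$, i.e. $c=|H|^{-1}$; here the hypothesis that $\mathrm{char}\,k$ is coprime to $|G|$ is exactly what guarantees $|H|$ is invertible in $k$. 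Finally, taking any $t\in H$ in the third relation and dividing by $c\neq 0$ yields $\lambda_g=\lambda_{gt^{-1}}$ for all $g$; reading this as $g\in H\iff gt^{-1}\in H$ and specialising $g=e$ and $g=s\in H$ gives closure under inverses and products, so $H$ is a subgroup of $G$.

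For the converse I would start from a subgroup $H\leq G$, define $\lambda_g=|H|^{-1}$ for $g\in H$ and $\lambda_g=0$ otherwise, and verify the three relations. The first is immediate, and the third reduces to the observation that right translation by $t^{-1}$ (with $t\in H$) permutes $H$. The only nontrivial check is the second (convolution) relation: for $g\in H$ exactly the $|H|$ indices $l\in H$ contribute, each giving $|H|^{-2}$, summing to $|H|^{-1}=\lambda_g$; for $g\notin H$ no index contributes, since $l\in H$ and $gl^{-1}\in H$ would force $g\in H$. These two assignments are visibly mutually inverse, which establishes the bijection.

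The main obstacle I anticipate is the middle step: extracting the subgroup axioms purely from the third relation. The idempotency argument ($\lambda_g\in\{0,\lambda_e\}$) and the translation-invariance $\lambda_g=\lambda_{gt^{-1}}$ must be combined carefully to obtain closure under both inverses and products, and one has to keep track of where invertibility of $|H|$ (hence the coprimality hypothesis) is genuinely used. The convolution identity in the converse is routine once the counting is set up, so the conceptual weight really sits in recognising the support $H$ as a subgroup.
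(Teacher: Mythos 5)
Your proof is correct and follows the same route as the paper's (which merely sketches the argument and defers to \cite{AAB}): identify the grading with the scalars $\lambda_g=p_g\cdot 1$, show the support $H=\{g:\lambda_g\neq 0\}$ is a subgroup with $\lambda_g=|H|^{-1}\delta_{gH,H}$, and check the converse. You supply exactly the details the paper omits --- the idempotency $\lambda_g\in\{0,\lambda_e\}$, the translation invariance $\lambda_g=\lambda_{gt^{-1}}$, and the convolution count --- and your only imprecision is cosmetic: in the forward direction $|H|c=1$ already forces $|H|$ invertible, so the coprimality hypothesis is genuinely needed only for the converse assignment.
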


\begin{proof} The partial grading corresponds to a linear map $\lambda : k^G \rightarrow k$ determined by the values $\lambda_g = \lambda(p_g)$. The support of the partial grading is the set $H = \{g \in G; \lambda_g \neq 0\}$. This set is a subgroup of $G$ and it can be shown that 
\begin{equation}\eqlabel{partactk}
\lambda_g  = \frac{1}{|H|} \delta_{gH,H} 
\end{equation}
for all $g \in G$. 
\end{proof}

Since $k \simeq End(k)$, it follows by \exref{partact} that the partial actions described in
\prref{partgradk} induce partial representations of $k^G$ on $k$ by the following formula (which is in fact the same as
\equref{partactk})
\[
\pi:k^G\to k,\ \pi(p_g)  = \frac{1}{|H|} \delta_{gH,H}
\]
for all $g \in G$.

Before we consider partial gradings on the matrix algebra $A=M_n(k)$ let us recall the following method to construct
usual $G$-gradings on a matrix algebra (see \cite{DINM} for more details). 
Given $(g_1,g_2, \ldots, g_n) \in G^n$, there is a $G$-grading on $M_n(k)$ given by the formula
\begin{equation} \label{formula.grading.DINM}
\deg(E_{i,j}) = g_ig_j^{-1}, \ 1 \leq i, j
\leq n
\end{equation}
A $G$-grading of $M_n(k)$ in which every elementary matrix $E_{i,j}$ is homogeneous is called a \emph{good grading} in \cite{DINM} (and an \emph{elementary grading} in \cite{BSZ}). 
Note that  the elements $(g_1,g_2, \ldots, g_n)$ and $(g_1k, g_2k, \ldots, g_nk)$ define the same grading for any $k \in G$. It is easy to see that there are no more repetitions if we fix $g_1=1$, and therefore we have a different $G$-grading of $M_n(k)$ for each element of $G^{n-1}$.  It turns out that every good $G$-grading is obtained in this way.  

\begin{prop} \cite[Prop.~2.1]{DINM}
There is a bijective correspondence between good $G$-gradings of $M_n(k)$ and elements of $G^{n-1}$.
\end{prop}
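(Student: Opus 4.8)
The plan is to show that the assignment $(g_1,\dots,g_n)\mapsto\bigl(\deg(E_{i,j})=g_ig_j^{-1}\bigr)$ from \eqref{formula.grading.DINM} descends, after normalizing $g_1=1$, to a bijection onto good gradings. This splits into three routine checks (well-definedness, separation of normalized tuples, surjectivity), of which only the last carries any content.

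First I would verify that \eqref{formula.grading.DINM} genuinely defines a $G$-grading. Since $E_{i,j}E_{k,l}=\delta_{j,k}E_{i,l}$, the only nonzero products are $E_{i,j}E_{j,l}=E_{i,l}$, and there $(g_ig_j^{-1})(g_jg_l^{-1})=g_ig_l^{-1}$ matches $\deg(E_{i,l})$; moreover $\deg(E_{i,i})=1$, so the identity $\sum_iE_{i,i}$ is homogeneous of degree $1$. Thus every tuple yields a good grading. Next, for injectivity after normalization, I would note that two tuples $(g_i)$ and $(g'_i)$ induce the same grading exactly when $g_ig_j^{-1}=g'_i{g'_j}^{-1}$ for all $i,j$, which forces ${g'_i}^{-1}g_i$ to be independent of $i$, say equal to some $k\in G$; that is, the tuples differ by the right translation $g_i=g'_ik$. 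Imposing $g_1=g'_1=1$ then forces $k=1$ and the tuples coincide, so elements of $G^{n-1}$, recorded as $(g_2,\dots,g_n)$, index pairwise distinct good gradings.

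The substance is surjectivity. Given any good grading, each $E_{i,j}$ is homogeneous of some degree $\gamma_{i,j}\in G$, and since the $E_{i,j}$ form a basis the grading is entirely encoded by the function $(i,j)\mapsto\gamma_{i,j}$. I would read off the relations from multiplicativity: from $E_{i,i}^2=E_{i,i}\neq0$ one gets $\gamma_{i,i}=1$, and from $E_{i,j}E_{j,l}=E_{i,l}\neq0$ one gets the cocycle identity $\gamma_{i,l}=\gamma_{i,j}\gamma_{j,l}$ (so in particular $\gamma_{j,i}=\gamma_{i,j}^{-1}$). Setting $g_i:=\gamma_{i,1}$, the cocycle identity with middle index $1$ gives $g_ig_j^{-1}=\gamma_{i,1}\gamma_{1,j}=\gamma_{i,j}$, so the given grading is the one built from $(g_1,\dots,g_n)$; and $g_1=\gamma_{1,1}=1$ shows this representative is already normalized.

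I do not expect a serious obstacle, as the whole argument is essentially the derivation of a $G$-valued cocycle trivialized by a vertex labelling. The single point deserving care is the justification of the cocycle identity: one must invoke that a nonzero homogeneous element lies in a \emph{unique} graded component, so that $E_{i,l}\in A_{\gamma_{i,j}}A_{\gamma_{j,l}}\subseteq A_{\gamma_{i,j}\gamma_{j,l}}$ together with $E_{i,l}\in A_{\gamma_{i,l}}$ forces $\gamma_{i,l}=\gamma_{i,j}\gamma_{j,l}$ rather than merely an inclusion of components.
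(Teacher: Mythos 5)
Your proof is correct. The paper itself does not prove this proposition---it is quoted from the reference [D\u{a}sc\u{a}lescu--Ion--N\u{a}st\u{a}sescu--Rios Montes, Prop.~2.1]---but your argument is exactly the one implicit in the surrounding discussion: the assignment $\deg(E_{i,j})=g_ig_j^{-1}$, the observation that two tuples give the same grading iff they differ by a common right translation (so normalizing $g_1=1$ yields an injection from $G^{n-1}$), and surjectivity via the cocycle identity $\gamma_{i,l}=\gamma_{i,j}\gamma_{j,l}$ trivialized by $g_i:=\gamma_{i,1}$. The only cosmetic difference is that the cited source parametrizes good gradings by $(h_1,\dots,h_{n-1})$ with $h_i=\deg(E_{i,i+1})$, which the paper notes is equivalent to your normalized tuples via $h_i=g_ig_{i+1}^{-1}$; your closing remark about uniqueness of the homogeneous component of a nonzero element is precisely the point that makes the cocycle identity an equality rather than an inclusion.
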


We should  mention that this correspondence is given using a (seemingly) different grading: given a good $G$-grading of $M_n(k)$, the idempotents $E_{i,i}$ are all homogeneous and hence have degree $1$, and the degrees of the other elementary matrices  are determined  by the elements $h_1,h_2, \ldots, h_{n-1}$ such that $\deg(E_{i,i+1}) = h_i$. In fact, it is easy to see that the other degrees are given by 
\begin{align}
\deg(E_{i,j}) & = h_ih_{i+1} \cdots h_{j-1}, \ \ & \deg(E_{j,i}) & = h_{j-1}^{-1}h_{j-2}^{-1} \cdots h_{i}^{-1} \label{formula.grading.DINM.2}
\end{align}
for $1 \leq i < j \leq n.$ Conversely, any $(n-1)$-tuple of elements of $G$ determines a good $G$-grading of $M_n(k)$ in this manner.

It is easy to see that if one begins with the $G$-grading of $M_n(k)$ defined by  $(g_1,g_2,\ldots,g_n)$ via \eqref{formula.grading.DINM}, then
the elements $h_i = g_ig_{i+1}^{-1}$ define the same grading via \eqref{formula.grading.DINM.2} . Conversely, given $(h_1,h_2, \ldots, h_{n-1})$, if we define $g_1 = 1$ and $g_i = h_{i}^{-1}h_{i-1}^{-1} \cdots h_1^{-1}$ for $2 \leq i \leq n$ then the former formulas for the grading defined by the $h_i$'s defines the same grading as the one obtained from the $g_i$'s by \eqref{formula.grading.DINM}.

By analogy with $G$-gradings of $M_n(k)$, we will say that a partial $G$-grading of $M_n(k)$ is \emph{good} if the elementary matrices $\{E_{i,j}; 1 \leq i,j \leq n\}$ are simultaneous eigenvectors for all operators $p_g \cdot \_ $.  In what follows we will describe all good partial $G$-gradings of $M_n(k)$ for a finite  group $G$.

Recall from \cite[Thm.~3.9]{AAB} that if $H$ is a cocommutative Hopf algebra and $A$, $B$ are two left partial
$H$-module algebras then $A\ot B$ is again a partial $H$-module algebra.

 As a consequence, if $G$ is abelian,
 $A=M_n(k)$ is endowed with a $G$-grading by a fixed $n$-tuple $(g_1,\ldots,g_n)$ as above; and $B$ is any partially
$G$-graded algebra then we obtain a partial $G$-grading on $B \otimes A$. In particular,
we may tensor $A=M_n(k)$ by the partial $G$-grading on $k$ of the proposition \ref{partial.grading.k} in order to obtain
a partial $G$-grading on $k \otimes A \simeq A$. This partial grading is given explicitly by  
\begin{equation}\label{equation.partialgrading.matrix.algebra}
 p_g \cdot  E_{i,j} 
= \frac{1}{|H|} \delta_{gH,g_ig_j^{-1}H} \ E_{i,j}.
\end{equation}
because
\begin{align*}
 p_g \cdot (1 \otimes E_{i,j}) & =  \sum_x (p_{gx^{-1}} \cdot 1) \otimes ( p_x \cdot E_{i,j}) = 
\frac{1}{|H|} \sum_{gx^{-1} \in H} 1 \otimes  \delta_{x,g_ig_j^{-1}} E_{i,j}  \\
&
= \frac{1}{|H|} \delta_{gH,g_ig_j^{-1}H} 1 \otimes E_{i,j}.
\end{align*}
Note that $p_g \cdot ( p_g \cdot E_{i,j}) = (1/|H|) p_g \cdot E_{i,j}$, and hence the operator $p_g \cdot \_$ is a projection 
if and only if $|H| = 1$; in particular, if $|H| \neq 1$ this grading is properly partial.

By \exref{partact}, this partial action induces a partial representation $\pi:k^G\to M_n(k)$ given by
\begin{equation}
\pi(p_g) = \frac{1}{|H|} \sum_i \delta_{g_iH,gH} E_{i,i}.
\end{equation}
As $M_n(k)\simeq \End_k(k^n)$, this means that $k^n$ is a partial $k^G$-module, where 
\[
p_g \cdot  e_i = \frac{1}{|H|}  \delta_{g_iH,gH} e_i. 
\]

Another way of comparing a partial grading with a usual grading is to take $A_g$ to be the image of the operator
$\pi(p_g)$; with respect to the last example, we get 
\begin{equation}
A_g = \bigoplus_{\substack{1 \leq i,j \leq n\\  gH = g_ig_j^{-1}H}}  kE_{i,j}
\end{equation}
and $A_g = A_t$ if and only if $gH = tH$. Something curious occurs if we 
consider the quotient group $G/H$: If for each coset $gH$ we define 
\begin{equation}
A_{gH} = A_g
\end{equation}
we obtain $A = \bigoplus_{gH \in G/H} \ A_{gH}$ and also  
\[
A_{gH} A_{tH} = A_g A_t \subseteq  A_{gt} = A_{gtH}
\]
i.e., if we ``quotient $H$ out'' we obtain a  $G/H$-grading of $A$. We will see in the following that indeed 
there is a correspondence between partial $G$-gradings and gradings by $G/H$. 

If $H,K$ are Hopf algebras and $\varphi: H \rightarrow K$ is a morphism of Hopf algebras, then the induction
functor  ${}_K \mathcal{M} \rightarrow {}_H \mathcal{M}$ 
(which associates to $M \in {}_K\mathcal{M}$ the $H$-module $M$ where $h \cdot_\varphi m = \varphi(h) m$) is
monoidal and therefore takes $K$-module algebras to $H$-module algebras. As a consequence of \prref{Hoidfunctor},
 the same holds for partial representations of $K$ and partial $K$-module algebras. 

In particular, if $G$ is a finite group and $H$ is a normal subgroup then the projection $\pi : G \rightarrow G/H$ induces a
morphism $\pi^*: k^{G/H} \rightarrow k^{G}$ of Hopf algebras, and therefore if $A$ is a $k^{G}$-module algebra then $A$ is canonically a $k^{G/H}$-module algebra by 

\begin{equation} \label{formula.pullback}
p_{gH} \acts a = \pi^*(p_{gH}) \rightarrow a = \sum_{h \in H} p_{gh} \rightarrow a. 
\end{equation}

The converse is not true, i.e., usually $G/H$-gradings cannot be lifted to $G$-gradings. But they can be canonically lifted to \emph{partial} $G$-gradings.

In what follows, to avoid confusion, we will write $p_g$ for the elements of the canonical basis of 
$k^G$ and $\q_{gH}$ (not $p_{gH})$ for the elements of the basis of $k^{G/H}$. 

\begin{prop} \prlabel{prop.normalsubgroup}
Let $G$ be a finite group, $H$ be  a normal subgroup of $G$, and let $A$ be a  $G/H$-graded algebra.  Then $A$ is also 
a partial $G$-graded algebra by 

\begin{equation} \label{eqn.up}
p_g \cdot a = \frac{1}{|H|}  \q_{gH} \rightarrow a .
\end{equation}

\end{prop}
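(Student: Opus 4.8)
The plan is to verify directly the three axioms of \deref{def.partial.grading} for the map \eqref{eqn.up}, after rephrasing the $G/H$-grading in terms of homogeneous components. First I would use that $H\trianglelefteq G$, so that $G/H$ is a group and the grading decomposes $A=\bigoplus_{C\in G/H}A_C$ with $1_A\in A_{\bar e}$ (writing $\bar e=eH$) and $A_CA_D\subseteq A_{CD}$. Under the identification of $G/H$-gradings with $k^{G/H}$-module structures, $\q_{tH}\rightarrow a$ is simply the projection of $a=\sum_{C}a_C$ onto its degree-$tH$ part $a_{tH}$, so the proposed action becomes $p_g\cdot a=\frac{1}{|H|}\,a_{gH}$. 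This already records the one hypothesis that must be invoked besides normality: $|H|$ is invertible in $k$, which holds under the standing assumption that $\mathrm{char}\,k$ is coprime to $|G|$.

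Then I would check the axioms in turn. The first is a counting argument: each coset $C$ equals $gH$ for exactly $|H|$ elements $g$, so $\sum_{g\in G}p_g\cdot a=\frac{1}{|H|}\sum_{C}|H|\,a_C=a$. For the second, I would expand $(ab)_{gH}=\sum_{CD=gH}a_Cb_D$ using the grading, and regroup the right-hand side $\sum_{l\in G}(p_{gl^{-1}}\cdot a)(p_l\cdot b)$ according to the coset $D=lH$; since $gl^{-1}H=(gH)D^{-1}$ and each $D$ comes from $|H|$ values of $l$, one factor $\frac{1}{|H|}$ is absorbed and the two expressions coincide. For the third, homogeneity gives $(a_{tH})_{gH}=\delta_{gH,tH}\,a_{tH}$ on the left, while $1_A\in A_{\bar e}$ forces $p_{gt^{-1}}\cdot 1_A=\frac{1}{|H|}\delta_{gH,tH}\,1_A=p_{t^{-1}g}\cdot 1_A$; combined with $1_A a_{tH}=a_{tH}=a_{tH}1_A$, both outer products collapse to $\frac{1}{|H|^2}\delta_{gH,tH}\,a_{tH}$, matching the left-hand side.

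I do not expect a genuine obstacle: the whole argument is coset bookkeeping plus the normalization by $\frac{1}{|H|}$, and the only inputs are the normality of $H$ (so that $G/H$ is a group and the grading is multiplicative) and the invertibility of $|H|$. The conceptual reason behind the three computations, which I would point out, is that with $\Delta(p_g)=\sum_{l\in G}p_{gl^{-1}}\otimes p_l$ the three axioms of \deref{def.partial.grading} are precisely \actref{partact1}, \actref{partact2}, \actref{partact3} together with the symmetry \actref{partact3b} for the partial $k^G$-action $\cdot$; thus the proposition amounts to showing that these rescaled projections equip $A$ with a symmetric partial $k^G$-module algebra structure, which is exactly why the $\frac{1}{|H|}$ normalization — the same one appearing in \prref{partgradk} for the ground field — is the correct one.
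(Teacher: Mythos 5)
Your proposal is correct and follows essentially the same route as the paper: a direct verification of the three axioms of \deref{def.partial.grading}, using the counting factor $|H|$ per coset and the normalization $\frac{1}{|H|}$, with your rewriting of $\q_{tH}\rightarrow a$ as the projection onto the homogeneous component $a_{tH}$ being only a notational variant of the paper's computation with the global $k^{G/H}$-action. The coset bookkeeping (including the use of normality to identify $gl^{-1}H$ with $(gH)(lH)^{-1}$ and the invertibility of $|H|$ in $k$) matches the paper's argument step for step.
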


\begin{proof}
It is clear that $\sum_{g} p_g \cdot a = a$ for all $a \in A$. With respect to the composition of $p_g$ and $p_t$, we have 
\beqnast
p_g \cdot ( p_t \cdot a) & = & 
\frac{1}{|H|^2} \q_{gH} \rightarrow (\q_{tH} \rightarrow a)  
 =  \delta_{tH,gH}\frac{1}{|H|^2} (\q_{tH} \rightarrow a ) \\ 
\eqnast
and 
\beqnast
 (p_{gt^{-1}} \cdot 1_A)(p_t \cdot a) 
 &= &   (p_t \cdot a) (p_{t^{-1}g} \cdot 1_A) =  \delta_{tH,gH} \frac{1}{|H|^2}(\q_{tH} \rightarrow a ).  
\eqnast
and hence (iii) holds. And applying $p_g$ on a product we obtain 
\beqnast
p_g \cdot ab & = & \frac{1}{|H|}   \q_{gH} \rightarrow ab   =  \frac{1}{|H|}  
 \sum_{kH \in G/H}(\q_{gk^{-1}H} \rightarrow a )(\q_{kH} \rightarrow b)   \\
& = & \frac{1}{|H|^2}   \sum_{k \in G}(\q_{gk^{-1}H} \rightarrow a )(\q_{kH} \rightarrow b)    =   \sum_{k \in G}(p_{gk^{-1}} \cdot a )(p_{k} \cdot b)   
\eqnast
thus showing that this is indeed a partial $G$-grading of $A$.
\end{proof}

Our next aim is to investigate which kinds of partial $G$-gradings on an algebra $A$ can be obtained from global $G/H$
gradings for a normal subgroup $H$ of $G$, as in the previous proposition. 

We say that a partial $G$-grading of $A$ is \emph{linear} if $1_A$ is an eigenvector for each operator $p_g \cdot \_ \ $, i.e., for each $g \in G$ there is a $\lambda_g \in k$ such that $p_g \cdot 1_A = \lambda_g 1_A$. 
In this case, the restriction of the partial grading to $k1_A$   corresponds to a partial grading of the field $k$ and these are listed
in Example \ref{partial.grading.k} : There is a subgroup $H$ of $G$ such that 
$\lambda_g = \delta_{gH,H} \frac{1}{|H|}$.
 We will call $H$ the \emph{linear support} of the partial grading of $A$.

Partial $G$-graded algebras with linear support $H$ form a full subcategory of $\ParAct_{k^G}$ which we will call
$\ParAct_{k^G,H}$. The last result provides a functor $\uparrow_{G/H}^G: \Alg({}_{k^{G/H}}\Mm) \rightarrow
\ParAct_{k^G,H}$, where $A \uparrow_{G/H}^G$ denotes the partial $G$-grading on $A$ obtained from its $G/H$-grading. 
It is clear from \eqref{eqn.up} that every morphism $\phi: A \vai B$ of $G/H$-graded algebras is also a morphism of partial $G$-graded algebras and therefore we define $ \phi\uparrow_{G/H}^G = \phi$.

There is a functor going in the opposite direction.

\begin{prop} 
Let $G$ be a finite group, $k$ be a field such that $\mathrm{char}(k) \nmid |G|$, and let $H$ be a normal subgroup of
$G$. Every linear partial $G$-grading of $A$ with linear support $H$ determines a $G/H$-grading of
$A$ by the formula 
\begin{equation} \label{eqn.down}
q_{gH} \acts a =  |H| \ p_g \cdot a.
\end{equation}
Moreover, this formula defines a functor $\downarrow_{G/H}^G: \ParAct_{k^G,H} \rightarrow \Alg({}_{k^{G/H}}\Mm)$.
\end{prop}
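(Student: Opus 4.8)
The plan is to recognize that a $G/H$-grading of $A$ is precisely a (global, unital) $k^{G/H}$-module algebra structure, i.e.\ an object of $\Alg({}_{k^{G/H}}\Mm)$, and hence to verify that the operation $q_{gH}\acts a=|H|\,p_g\cdot a$ of \eqref{eqn.down} satisfies the module-algebra axioms: that the operators $q_{gH}\acts\_$ are orthogonal idempotents summing to the identity, that $q_{gH}\acts(ab)=\sum_{tH}(q_{gt^{-1}H}\acts a)(q_{tH}\acts b)$, and that $q_{gH}\acts 1_A=\delta_{gH,H}1_A$. Since $|H|$ divides $|G|$ and $\mathrm{char}(k)\nmid|G|$, the integer $|H|$ is invertible in $k$, which is exactly what is needed to cancel the factor $1/|H|$ coming from the linear support.

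The key preliminary step, which I expect to be the conceptual heart of the argument, is to show that $p_g\cdot a$ depends only on the coset $gH$, so that \eqref{eqn.down} is well defined. For this I would apply the operator $p_g\cdot\_$ to the resolution of unity $a=\sum_{t\in G}p_t\cdot a$ (the first partial-grading axiom), expand each $p_g\cdot(p_t\cdot a)$ using the third axiom together with the linear-support value $p_s\cdot 1_A=\tfrac{1}{|H|}\delta_{sH,H}1_A$, and use normality of $H$ to rewrite $\delta_{gt^{-1}H,H}=\delta_{gH,tH}$. This yields
\[
p_g\cdot a=\frac{1}{|H|}\sum_{h\in H}p_{gh}\cdot a,
\]
whose right-hand side is manifestly constant on $gH$; equivalently $q_{gH}\acts a=\sum_{h\in H}p_{gh}\cdot a$. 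This single identity both legitimates the definition and supplies the clean coset formula on which all remaining checks rest.

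With this in hand the module-algebra axioms should fall out. Unitality $\sum_{gH}q_{gH}\acts a=a$ is immediate from the first axiom after regrouping the sum over $G$ into cosets; the orthogonality relation $q_{gH}\acts(q_{tH}\acts a)=\delta_{gH,tH}\,q_{tH}\acts a$ (which encodes associativity of the $k^{G/H}$-action together with the fact that the $q_{gH}$ are orthogonal idempotents) follows by substituting the definition and reusing the computation of $p_g\cdot(p_t\cdot a)$ above; and the counit relation $q_{gH}\acts 1_A=\delta_{gH,H}1_A$ is a direct consequence of the linear-support hypothesis. The main computational obstacle will be the multiplicativity axiom: starting from the second partial-grading axiom $p_g\cdot(ab)=\sum_{l\in G}(p_{gl^{-1}}\cdot a)(p_l\cdot b)$, I would reindex $l=sh$ over coset representatives $s$ and $h\in H$, and use normality to identify $gh^{-1}s^{-1}H=gs^{-1}H$, so that each inner $H$-sum contributes a factor $|H|$; careful bookkeeping of the powers of $|H|$ then produces exactly $\sum_{tH}(q_{gt^{-1}H}\acts a)(q_{tH}\acts b)$.

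Finally, for functoriality I would set $\phi\downarrow_{G/H}^{G}=\phi$ on morphisms. A morphism $\phi$ in $\ParAct_{k^G,H}$ is a unital algebra map commuting with every operator $p_g\cdot\_$; since $q_{gH}\acts\_=|H|\,p_g\cdot\_$, it automatically commutes with each $q_{gH}\acts\_$ and is therefore a morphism of $G/H$-graded algebras, i.e.\ a morphism in $\Alg({}_{k^{G/H}}\Mm)$. Preservation of identities and of composition is then trivial, so $\downarrow_{G/H}^{G}$ is a well-defined functor.
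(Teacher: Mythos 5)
Your proposal is correct, and its conceptual core coincides with the paper's: both arguments hinge on the same preliminary lemma, namely that writing $a=1_A\,a$ (paper, via axiom (2)) or $a=\sum_t p_t\cdot a$ (you, via axioms (1) and (3)) and inserting the linear-support value $p_s\cdot 1_A=\tfrac{1}{|H|}\delta_{sH,H}1_A$ yields $p_g\cdot a=\tfrac{1}{|H|}\sum_{t\in gH}p_t\cdot a$, so that $p_g\cdot\_$ is constant on cosets and \eqref{eqn.down} is well defined. Where you diverge is in how the $k^{G/H}$-module-algebra axioms are then established. You verify them all by direct computation (resolution of unity, orthogonal idempotency of the $q_{gH}\acts\_$, the counit condition, and the multiplicative axiom via the reindexing $l=sh$ and normality of $H$); I checked the bookkeeping of the factors $|H|$ in the multiplicativity step and it does come out right. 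The paper instead observes that $q_{gH}\cdot a:=\sum_{t\in gH}p_t\cdot a$ is exactly the pullback of the given partial action along the Hopf algebra map $\pi^*:k^{G/H}\to k^G$ of \eqref{formula.pullback}, so by the functoriality already established it is automatically a \emph{partial} $k^{G/H}$-module algebra structure; the only thing left to check is the single identity $q_{gH}\cdot 1_A=\varepsilon(q_{gH})1_A$, which (by the criterion recorded after Definition \ref{partialrep}) forces the partial action to be global. Your route is more elementary and self-contained but computationally heavier; the paper's buys all but one axiom for free from the induction functor at the cost of invoking earlier structural results. Your treatment of functoriality on morphisms (the identity on underlying maps, using $q_{gH}\acts\_=|H|\,p_g\cdot\_$) matches what the paper leaves implicit.
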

\begin{proof}
Consider a linear partial grading of $A$ with support $H$. 
Note that 
\begin{equation*}
p_g \cdot a = p_g \cdot (1_A \ a) = \sum_{t}(p_{gt^{-1}} \cdot 1_A)(p_t \cdot a) =
 \sum_{t} \delta_{gH,tH} \dfrac{1}{|H|} ( p_t \cdot a).
\end{equation*}
and therefore
\begin{equation}
p_g \cdot a =
  \dfrac{1}{|H|}  \sum_{t \in gH} (p_t \cdot a). 
\end{equation}
It follows that $p_g \cdot a = p_r \cdot a$ if $gH = rH$.

Now consider the partial $k^{G/H}$-grading of $A$ defined by the induction functor associated to the canonical morphism of Hopf algebras 
$\pi^*:k^{G/H} \rightarrow k^G$.  Then 
\begin{equation*}
q_{gH} \cdot a = (\pi^*(q_{gH})) \cdot a = \sum_{t \in gH} p_{t} \cdot a = |H| \ p_g \cdot a.
\end{equation*} 
Applying $p_g $ to $1_A$ we get
\begin{equation*}
q_{gH} \cdot 1_A = |H| \ p_g \cdot 1_A
=  \delta_{gH,H} 1_A = \varepsilon(q_{gH}) 1_A
\end{equation*}
and therefore the $k^{G/H}$-action is \emph{global}, i.e., this is a $G/H$-grading of $A$.
\end{proof}

\begin{thm} \thlabel{thm.linear.grading}
Let $G$ be a finite group, $k$ be a field such that $\mathrm{char}(k) \nmid |G|$, and let $H$ be a normal subgroup of
$G$. The functor $ \downarrow_{G/H}^G: \ParAct_{k^G,H} \rightarrow \Alg({}_{k^{G/H}}\Mm)$ is an isomorphism of
categories, with inverse 
$\uparrow_{G/H}^G: \Alg({}_{k^{G/H}}\Mm) \rightarrow \ParAct_{k^G,H}$.
\end{thm}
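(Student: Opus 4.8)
The plan is to show that the two functors $\downarrow_{G/H}^G$ and $\uparrow_{G/H}^G$ are mutually inverse, which reduces to checking that their compositions in both directions equal the respective identity functors. Since both functors act as the identity on morphisms (the previous two propositions define them on morphisms by $\phi \mapsto \phi$), it suffices to verify that the two composites act as the identity on objects, i.e.\ that the grading data are recovered after applying one functor and then the other. The essential content is therefore a pair of computations comparing formulas \eqref{eqn.up} and \eqref{eqn.down}.

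First I would start from a $G/H$-graded algebra $A$, apply $\uparrow_{G/H}^G$ to obtain the partial $G$-grading $p_g \cdot a = \frac{1}{|H|}\, q_{gH} \acts a$, and then apply $\downarrow_{G/H}^G$. Using \eqref{eqn.down} this yields the new $G/H$-action
\[
q_{gH} \acts' a = |H|\, p_g \cdot a = |H| \cdot \frac{1}{|H|}\, q_{gH}\acts a = q_{gH}\acts a,
\]
so $\downarrow_{G/H}^G \circ \uparrow_{G/H}^G = \mathrm{Id}$ on objects. Here one should also note that the partial $G$-grading produced by $\uparrow_{G/H}^G$ has linear support exactly $H$: indeed $p_g \cdot 1_A = \frac{1}{|H|} q_{gH}\acts 1_A = \frac{1}{|H|}\varepsilon(q_{gH}) 1_A = \frac{1}{|H|}\delta_{gH,H}\, 1_A$, so the object lands in the correct subcategory $\ParAct_{k^G,H}$.

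For the reverse composite, I would begin with a \emph{linear} partial $G$-grading of $A$ having linear support $H$, apply $\downarrow_{G/H}^G$ to get the $G/H$-grading $q_{gH}\acts a = |H|\, p_g\cdot a$, and then apply $\uparrow_{G/H}^G$ to recover a partial $G$-grading $p_g \cdot' a = \frac{1}{|H|}\, q_{gH}\acts a = |H|\cdot\frac{1}{|H|}\, p_g\cdot a = p_g\cdot a$. The main obstacle, and the point that makes the linearity hypothesis indispensable, is that this last chain is only meaningful once one knows that the intermediate object $q_{gH}\acts a$ is genuinely a (global) $G/H$-grading rather than merely a partial one; this was already established in the preceding proposition, whose proof used the key identity
\[
p_g \cdot a = \frac{1}{|H|}\sum_{t\in gH}(p_t \cdot a),
\]
valid precisely because $1_A$ is an eigenvector for every $p_g\cdot\_\,$. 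I would invoke that proposition directly. The only remaining care is to confirm that the intermediate $G/H$-graded algebra is well defined independently of the coset representative $g$, which again follows from that identity since $p_g \cdot a = p_r \cdot a$ whenever $gH = rH$. With both composites shown to be the identity on objects and both functors identical on morphisms, the categories are isomorphic via mutually inverse functors, completing the proof.
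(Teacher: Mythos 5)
Your proof is correct and takes essentially the same approach as the paper: both composites are shown to be the identity on objects by the same two one-line comparisons of \eqref{eqn.up} and \eqref{eqn.down}, with the observation that the functors act as the identity on morphisms. Your added check that $\uparrow_{G/H}^G$ really lands in $\ParAct_{k^G,H}$ (i.e.\ that the induced partial grading is linear with support exactly $H$) is a small point the paper leaves implicit, but it does not change the argument.
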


\begin{proof}
Let $A$ be a $G/H$-graded algebra. Calculating the $G/H$-grading defined by $(A \downarrow_{G/H}^G)\uparrow^G_{G/H}$ we obtain 
\begin{equation*}
q_{gH} \acts a = \sum_{t \in gH} p_t \bullet a = \sum_{t \in gH} \frac{1}{|H|} q_{tH} \rightarrow a =  q_{gH} \rightarrow a
\end{equation*}
Conversely, beginning with a partial $G$-grading of $A$, the partial $G$-graded algebra $(A \uparrow^G_{G/H}) \downarrow_{G/H}^G$ is 
\begin{equation*}
p_{g} \bullet a = \frac{1}{|H|} q_{gH} \acts a = \frac{1}{|H|} |H| \ p_g \cdot a = p_g \cdot a.
\end{equation*}
Since these functors behave as identities on morphisms, this shows that they are mutually inverses.
\end{proof}

From this last result we obtain the description of all good partial $G$-grading of $M_n(k)$. This description can be obtained from \cite[Thm.~3.6]{AAB} if one translates partial gradings of $M_n(k)$ in terms of partial gradings of an associated $k$-linear category, but we provide a self-contained proof below. 
 
\begin{cor}
If $G$ is a finite abelian group and $k$ is a field such that $\mathrm{char}(k) \nmid |G|$ then 
every good partial $G$-grading of $M_n(k)$ is a linear grading. Moreover, given a  subgroup $H$ of $G$, there is a bijective correspondence between  good partial $G$-gradings of $M_n(k)$ with linear support $H$ and good  $G/H$-gradings of $M_n(k)$. 
\end{cor}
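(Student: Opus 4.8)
The plan is to prove the two assertions separately: the first (that good implies linear) by a direct analysis of the eigenvalues, and the second by invoking the categorical isomorphism of \thref{thm.linear.grading}. Throughout I regard a family of scalars $(\mu_g)_{g\in G}$ as the element $\sum_g \mu_g g$ of the group algebra $kG$, so that the convolution product in $kG$ encodes the sums $\sum_m \mu_{gm^{-1}}\nu_m$ appearing in the axioms of \deref{def.partial.grading}.

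For the first assertion, suppose we are given a good partial $G$-grading of $M_n(k)$ and write $p_g\cdot E_{i,j}=\lambda^{ij}_g E_{i,j}$ for the eigenvalues, setting $f^{ij}:=\sum_g\lambda^{ij}_g g\in kG$. Axiom (1) gives $\sum_g\lambda^{ij}_g=1$, so no $f^{ij}$ vanishes. Applying axiom (2) to the identity $E_{i,j}E_{j,l}=E_{i,l}$ yields the convolution relations $f^{il}=f^{ij}f^{jl}$ for all $i,j,l$; in particular $f^{ii}f^{ii}=f^{ii}$, so each $f^{ii}$ is a nonzero idempotent of $kG$, with $f^{ii}f^{il}=f^{il}=f^{il}f^{ll}$. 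To pin down the shape of $f^{ii}$ I would evaluate the two equalities of axiom (3) on $a=E_{i,j}$ and compare coefficients; using that $G$ is abelian this gives, for every $t$ in the support of $f^{ij}$, the relation $\lambda^{ij}_g=\lambda^{ii}_{gt^{-1}}$ for all $g$. Taking $i=j$ shows $f^{ii}$ is invariant under translation by each element of its support, so with $H_i:=\{s\in G:\lambda^{ii}_{gs}=\lambda^{ii}_g\text{ for all }g\}$ the group of periods of $f^{ii}$, one concludes that $f^{ii}$ is supported on $H_i$ and constant there, i.e. $f^{ii}=\frac{1}{|H_i|}\mathbf 1_{H_i}$ (the normalization $\sum_g\lambda^{ii}_g=1$ fixes the constant and forces $|H_i|$ to be invertible in $k$, which is also guaranteed by $\mathrm{char}(k)\nmid|G|$). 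The same relation then shows each $f^{ij}$ equals $\frac{1}{|H_i|}$ times the indicator of a single coset of $H_i$.

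The heart of the argument, and the step I expect to be most delicate, is to show that all the subgroups $H_i$ coincide. For this I would feed the coset descriptions back into the convolution relations. Since $f^{jj}=\frac{1}{|H_j|}\mathbf 1_{H_j}$ is the averaging idempotent of $H_j$, the identity $f^{ij}f^{jj}=f^{ij}$ says that $f^{ij}$ is right $H_j$-invariant; as its support is a single coset of $H_i$ and $G$ is abelian, this forces $H_iH_j=H_i$, whence $H_j\subseteq H_i$. By the symmetric computation with $f^{ji}f^{ii}=f^{ji}$ one gets $H_i\subseteq H_j$, so $H_i=H_j=:H$ for all $i,j$. Consequently all diagonal idempotents agree and
\[
p_g\cdot 1_A=\sum_i \lambda^{ii}_g E_{i,i}=\frac{1}{|H|}\,\delta_{gH,H}\,1_A ,
\]
so $1_A$ is an eigenvector, the grading is linear, and by \prref{partgradk} its linear support is precisely $H$.

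For the second assertion, note that since $G$ is abelian the subgroup $H$ is normal, so \thref{thm.linear.grading} applies and provides mutually inverse isomorphisms $\downarrow^G_{G/H}:\ParAct_{k^G,H}\to\Alg({}_{k^{G/H}}\Mm)$ and $\uparrow^G_{G/H}$. I would restrict these to the fixed algebra $M_n(k)$ (both functors keep the underlying algebra and only transform the action) and check that they match good objects with good objects: by the defining formulas $q_{gH}\acts a=|H|\,p_g\cdot a$ and $p_g\cdot a=\frac{1}{|H|}\,q_{gH}\acts a$, a matrix $E_{i,j}$ is an eigenvector for every $p_g\cdot\_$ if and only if it is an eigenvector for every $q_{gH}\acts\_$, the two eigenvalues differing by the invertible scalar $|H|$. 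Hence $\downarrow^G_{G/H}$ and $\uparrow^G_{G/H}$ carry good partial $G$-gradings of $M_n(k)$ with linear support $H$ to good $G/H$-gradings of $M_n(k)$ and back, yielding the asserted bijection. Finally, the first assertion guarantees that every good partial $G$-grading of $M_n(k)$ has a well-defined linear support, so these bijections, as $H$ ranges over the subgroups of $G$, account for all of them.
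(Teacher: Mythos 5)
Your proof is correct, but the route to the first assertion differs from the one in the paper. The paper deduces $\lambda^{ii}_g=\lambda^{jj}_g$ directly by comparing the \emph{two} equalities of item (3) of \deref{def.partial.grading} applied to $E_{i,j}$ (using that $G$ is abelian), and then observes that each $kE_{i,i}$ is a partial $G$-graded subalgebra of $M_n(k)$, so that \prref{partgradk} attaches to it a subgroup $H$ which is therefore the same for every $i$. You instead package the eigenvalue families as elements $f^{ij}\in kG$, extract the convolution relations $f^{il}=f^{ij}f^{jl}$ from axiom (2), determine that $f^{ii}=\frac{1}{|H_i|}\mathbf 1_{H_i}$ and that $f^{ij}$ is supported on a single $H_i$-coset, and then force $H_i=H_j$ from the idempotent relations $f^{ij}f^{jj}=f^{ij}$ and $f^{ji}f^{ii}=f^{ji}$; this is a genuinely different (and somewhat more structural) argument, at the cost of being longer. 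One bookkeeping remark: the relation $\lambda^{ij}_g=\lambda^{ii}_{gt^{-1}}$ you quote follows from the \emph{first} equality of axiom (3) alone and needs no commutativity; abelianness is actually used later, when you pass from $H_itH_j=H_it$ to $H_j\subseteq H_i$ (in general one would only get $tH_jt^{-1}\subseteq H_i$). For the second assertion both you and the paper invoke \thref{thm.linear.grading}; where the paper verifies goodness of the induced $G/H$-grading by explicitly computing the support cosets $S_{i,j}=t_{i,j}H$ and the eigenvalues $\lambda^{i,j}_r=1/|H|$, you observe more economically that the operators $q_{gH}\acts\_$ and $p_g\cdot\_$ differ by the invertible scalar $|H|$, so simultaneous eigenvectors are preserved in both directions (it is worth noting explicitly that for a \emph{global} grading ``every $E_{i,j}$ is a simultaneous eigenvector'' is equivalent to ``every $E_{i,j}$ is homogeneous'', which is what makes this shortcut legitimate). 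Both approaches are valid; yours trades the paper's short ad hoc computations for a cleaner algebraic mechanism in $kG$.
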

\begin{proof} 
We begin by proving that every good partial $G$-grading is linear.
By hypothesis, for each $i,j \in \{1, 2, \ldots, n\}$ there is a family  of scalars $\lambda_g^{i,j}$ indexed by $G$ such that
 $p_g \cdot E_{i,j} = \lambda_g^{i,j} E_{i,j}$. 
 
Fix $i < j $. Since $\sum_g p_g \cdot a = a$ for every $a \in M_n(k)$ there is at least one element $s \in  G$ such that $p_s \cdot E_{i,j} \neq 0$. It follows from the first equality of the third item of \deref{def.partial.grading} that  
\begin{equation} \label{pr.ps}
p_r \cdot( p_s \cdot E_{i,j} )= (p_{rs^{-1}}\cdot (\sum_{k} E_{k,k})) \lambda_s^{i,j} E_{i,j} = \lambda_{rs^{-1}}^{i,i}\lambda_s^{i,j} E_{i,j}.
\end{equation}
and from the second equality, plus the fact that $G$ is abelian, we obtain $p_r \cdot( p_s \cdot  E_{i,j} ) = \lambda_{rs^{-1}}^{j,j}\lambda_s^{i,j} E_{i,j}$, and therefore $\lambda_{g}^{j,j} =\lambda_{g}^{i,i}$ for all $g \in G$. 

On the other hand, for each $i \in \{1, \ldots, n\}$ the subalgebra $k E_{i,i}$ is a partial $G$-graded subalgebra of $M_n(k)$ (the only part that requires some care  is to check that the same item (3) of \deref{def.partial.grading} holds). Therefore, not only  the partial grading of $k E_{i,i}$  is defined by a subgroup $H$ of $G$ as in \prref{partgradk}, but also the same subgroup $H$ appears for every index $i$; in particular, if follows that the partial grading of $M_n(k)$ is linear with support $H$. 
 \thref{thm.linear.grading} then says  that this partial grading corresponds to a $G/H$-grading of $M_n(k)$ via equation \eqref{eqn.down}.

Let us check now that  this is a good $G/H$-grading.
Let $i,j \in \{1, \ldots ,n\}$, $i \neq j$, and let $S_{i,j} = \{ t \in G; p_t \cdot E_{i,j} \neq 0\}$. If $t$ and $s$ are in $S_{i,j}$ then 
\[
0 \neq p_t \cdot  (p_s \cdot E_{i,j}) = (p_{ts^{-1}} \cdot I)(p_s \cdot E_{i,j})
\]
and therefore $ts^{-1} \in H$, i.e., $tH = sH$. Conversely, it can be shown that if $tH = sH$ and $p_t \cdot E_{i,j} \neq 0$ then 
$p_s \cdot E_{i,j} \neq 0$. Therefore there exists $t = t_{i,j} \in G$ such that $S_{i,j} = t_{i,j}H$.

Consider $r,s \in  t_{i,j}H$. Then $p_r \cdot ( p_s \cdot E_{i,j}) = \lambda_s^{i,j} p_r \cdot E_{i,j} = \lambda_s^{i,j} \lambda_r^{i,j} E_{i,j}$, with both $\lambda_s^{i,j}$ and $ \lambda_r^{i,j}$ nonzero, but from Equation \eqref{pr.ps} it also follows that
\[
p_r \cdot ( p_s \cdot E_{i,j}) = \lambda_{rs^{-1}}^{i,i}\lambda_s^{i,j} E_{i,j} = 
\frac{1}{|H|}\lambda_s^{i,j} E_{i,j}
\]
and hence $\lambda_r^{i,j} = 1/|H|$ if $r \in t_{i,j}H$ (and zero otherwise). 

Using the formula for the $G/H$-grading on $M_n(k)$ provided by  \eqref{eqn.down} we obtain
\[
q_{gH} \rightarrow E_{i,j} = |H| p_g \cdot E_{i,j} = \delta_{t_{i,j}H, gH} E_{i,j}
\]
proving that every elementary matrix is homogeneous. Therefore the  $G/H$-grading associated to a partial good grading is also a good grading.

Conversely, it is clear that the bijective correspondence of \thref{thm.linear.grading} sends  good $G/H$-gradings of $M_n(k)$ to  good partial $G$-grading of $M_n(k)$ with linear support $H$.
\end{proof}

\subsection{Partial $\mathbb{Z}_2$ gradings}

We recall that for $H =  (kC_2)^*$ the map $[p_g] \in H_{par} \mapsto x \in  k[x]/\langle x(2x-1)(x-1) \rangle$ is an isomorphism of $k$-algebras. The Chinese remainder theorem says that  this last one  is also isomorphic as an algebra to 
\[
\dfrac{k[x]}{\langle x \rangle} \times \dfrac{ k[x]}{\langle x-1 \rangle} 
\times \dfrac{ k [x]}{ \langle 2x-1 \rangle}
\]
 In particular, if $\pi: H \rightarrow \End(V)$ is a partial representation then $V$ is an $H_{par}$-module and therefore it breaks up as a direct sum $V = V_0 \oplus V_1 \oplus V_{1/2}$, where $V_k$ is the Eigenspace of $\pi(p_g)$ with respect to the eigenvalue $k$.  Identifying $\pi$ with its extension to $H_{par}$, the projections onto each subspace are given by 
$P_0 = \pi([p_e][p_e] - [p_e][p_g])$, 
$P_1 = \pi([p_g][p_g] - [p_e][p_g])$ and 
$P_{1/2} = 4[p_e][p_g]$ respectively.

It can be proved that $P_j^ 2 = P_j$ for $j=0,1,1/2$, that $P_j P_k = 0 $  if $j \neq k$, $P_0 + P_1 + P_{1/2} = I_V$ and that 
\begin{align*}
 \pi([p_e] )P_0 & = P_0    & \pi([p_g])P_0 & = 0     &  \pi([p_e ])P_{1/2} & =  (1/2) P_{1/2} \\
 \pi([p_g]) P_1 & = 0        & \pi([p_g])P_1 & = P_1  & \pi([p_g]) P_{1/2} &= (1/2) P_{1/2}& 
\end{align*}
since these equations already hold in $H_{par}$. We mention that the expressions for the projections can also be
obtained via the Chinese remainder theorem. 

From the correspondence between partial representations of  $H$ on $B$ and algebra morphisms from $H_{par}$ to $B$, we conclude, in particular,  that an operator $T \in \End(V)$ defines a partial representation of $ (kC_2)^*$ on  $V$  by $\pi(p_g) = T$
if and only if its minimal polynomial is a divisor of $p(x) = x(2x-1)(x-1) $, and it will be diagonalizable (note that a (global) representation of $ (kC_2)^*$ is a (global) $\Z_2$-grading of $V$ and these are given by algebra morphisms from $k[x]/\langle x(x-1) \rangle$ to $\End(V)$).

We recall that one of the motivations for studying partial representations is that any partial action of a Hopf algebra $H$ on a algebra $B$ defines a partial representation $\pi: H \rightarrow B$ by $\pi(h) (b) = h \cdot b$. A beautiful application of this procedure is the analysis of partial $\Z_2$-gradings of algebras, (i.e., partial actions of $(kC_2)^*$)  via the corresponding partial representations. 

\begin{thm} \label{lemma.decomposition.algebra}
Let $H = (kC_2)^ *$.
Let $A$ be a partially $\Z_2$-graded algebra and let $A = A_0 \oplus A_1 \oplus A_{1/2}$ be its decomposition as an $H_{par}$-module.
\begin{enumerate}
\item $ B = A_0 \oplus A_1$ is a subalgebra of $A$ which is $\Z_2$-graded by  $B_{\overline{0}} = A_0$ and $B_{\overline{1}} = A_1$.
\item  $A_{1/2}$ is an ideal of $A$
\item The unit of $A$ decomposes as  $1_A = u + v$ where  $u,v$ are orthogonal idempotents, $u \in A_0$ and $v \in A_{1/2}$.
\item $uAv = vAu=0$ and therefore $A \simeq B \times A_{1/2}$ as an algebra.  
\end{enumerate}
\end{thm}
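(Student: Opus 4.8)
The plan is to work entirely with the two operators $E=\pi(p_e)$ and $F=\pi(p_g)$ on $A$, which satisfy $E+F=I_A$. Under the isomorphism $H_{par}\cong k[x]/\langle x(2x-1)(x-1)\rangle$ with $x=[p_g]$, the summands $A_0,A_1,A_{1/2}$ are precisely the eigenspaces of $F$ for the eigenvalues $0,1,\tfrac12$, so the whole argument reduces to eigenvalue bookkeeping for $F$ together with the two partial-action axioms of \deref{def.partial.grading}.

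First I would pin down the multiplication. Specializing axiom (2) of \deref{def.partial.grading} to $C_2$ gives $F(ab)=F(a)E(b)+E(a)F(b)$. Hence if $a\in A_i$ and $b\in A_j$ then $F(ab)=(i+j-2ij)\,ab$, so $ab$ lies in the eigenspace for $i+j-2ij$. Running through $i,j\in\{0,1,\tfrac12\}$ yields the multiplication table $A_0A_0\subseteq A_0$, $A_0A_1,\,A_1A_0\subseteq A_1$, $A_1A_1\subseteq A_0$, and $A_iA_{1/2},\,A_{1/2}A_i\subseteq A_{1/2}$ for every $i$. The first four inclusions prove (1), with $B_{\overline 0}=A_0$, $B_{\overline 1}=A_1$ (the unit of $B$ being the element $u$ produced below, which sits in $A_0$), and the last ones prove (2).

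For (3), set $c=F(1_A)$ and $d=E(1_A)=1_A-c$. Putting $a=b=1_A$ in the relation above gives $c=cd+dc$, while axiom (3) of \deref{def.partial.grading} applied to $1_A$ (with $t=g$) gives $F(c)=dc=cd$; together these force $c^2=\tfrac12 c$. Substituting into the explicit formulas $P_1=F(2F-I)$ and $P_{1/2}=4F(I-F)$ (rewritten from the excerpt via $[p_g]=F$, $[p_e]=I-F$) yields $P_1(1_A)=0$ and $P_{1/2}(1_A)=2c$. With $v=2c\in A_{1/2}$ and $u=1_A-v\in A_0$ we then have $1_A=u+v$, and $v^2=4c^2=2c=v$, $u^2=(1_A-v)^2=u$, $uv=vu=v-v^2=0$, which is (3).

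The crux is (4), for which I would prove the sharper statement that $v$ (equivalently $c$) is central. Taking $a=1_A$ and then $b=1_A$ in the multiplicativity relation and using $E=I-F$ yields $cF(b)=\tfrac12 cb$ and $F(a)c=\tfrac12 ac$ for all $a,b$; the two halves of axiom (3) with one slot equal to $1_A$ give $F(F(a))=F(a)-cF(a)=F(a)-F(a)c$, hence $cF(a)=F(a)c$. Comparing $cF(a)=\tfrac12 ca$ with $F(a)c=\tfrac12 ac$ now forces $ca=ac$, so $v$ is a \emph{central} idempotent. Centrality makes $uAv=Auv=0$ and $vAu=Avu=0$ immediate, and $A=uA\oplus vA$ as a product of ideals. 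To identify the factors I would evaluate on homogeneous elements: the identity $cF(a)=\tfrac12 ca$ gives $ca=0$ for $a\in A_0\cup A_1$, while $F(F(a))=F(a)-cF(a)$ gives $ca=\tfrac12 a$ for $a\in A_{1/2}$; thus $va=0$ on $B$ and $va=a$ on $A_{1/2}$, whence $uA=B$, $vA=A_{1/2}$, and $A\cong B\times A_{1/2}$. The main obstacle is exactly the centrality of $v$: the inclusions for (1)–(3) are forced by pure eigenvalue arithmetic, but splitting $A$ as a direct product genuinely requires combining axiom (2) with \emph{both} halves of the symmetric axiom (3), and the chain $cF(a)=\tfrac12 ca=\tfrac12 ac=F(a)c$ is where the left–right symmetry of the partial action is essential.
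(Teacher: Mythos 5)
Your proof is correct: every identity you use does follow from the axioms of \deref{def.partial.grading} specialized to $C_2$, and the eigenvalue bookkeeping (the rule $F(ab)=(i+j-2ij)\,ab$, the relation $2c^2=c$ for $c=p_g\cdot 1_A$, and the three identities $cF(a)=\tfrac12 ca$, $F(a)c=\tfrac12 ac$, $cF(a)=F(a)c$) all checks out. For items (1) and (2) you and the paper do essentially the same computation, just packaged as eigenvalue arithmetic rather than a case-by-case verification. The real divergence is in (3) and (4). For (3) the paper argues structurally: writing $1_A=u_0+u_1+v$, it uses that $A_{1/2}$ is an ideal to see that $u_0+u_1$ is a unit for $B$, and then that the unit of a $\Z_2$-graded algebra is homogeneous of degree $\overline{0}$, forcing $u_1=0$; you instead get $u_1=P_1(1_A)=0$ by direct computation from $2c^2=c$. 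For (4) the paper kills $uAv$ and $vAu$ elementwise, applying $p_g$ twice to $uav\in A_{1/2}$ and comparing $\tfrac14\,uav$ with $(p_e\cdot 1_A)\bigl(\tfrac12\,uav\bigr)=\tfrac12\,uav$; you prove the stronger statement that $v$ is central, by combining the multiplicativity axiom with one slot equal to $1_A$ (a step the paper never takes) with both halves of axiom (3). Both arguments hinge on the same left--right symmetry of axiom (3), but your route makes the centrality of $v$ explicit and then identifies $uA=B$ and $vA=A_{1/2}$ by evaluating $ca$ on homogeneous elements, which is arguably cleaner than the paper's passage through $uAu$ and $vAv$. (A minor side remark: the paper's line ``$p_e\cdot 1_A=p_g\cdot 1_A=u+(1/2)v$'' is inaccurate for $p_g\cdot 1_A$, which equals $\tfrac12 v=c$ in your notation; only the value of $p_e\cdot 1_A$ is actually used in that computation, and your formulas agree with it.)
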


\begin{proof} The first item can be checked on a case-by-case basis. For instance, let $a,b \in A_{1}$.
Since $p_e \cdot x = (1 - p_g) \cdot x $, for all $x \in A$, it follows that  
$p_e \cdot a = p_e \cdot b = 0$ and hence
\begin{equation*}
p_g \cdot (ab) = (p_g \cdot a)(p_e \cdot b) + (p_e \cdot a)(p_g \cdot b) = 0
\end{equation*}
 which implies that   $ab \in A_0$. The other cases are analogous.  The same kind of computations also show that $A_{1/2}$ is an ideal.

For the third item, we can write $1 = u_0 + u_1 + v$ with $u_0 \in A_0, u_1 \in A_1, v \in A_{1/2}$. Given $a \in B = A_0 \oplus A_1$, we have $ a = a(u_0 + u_1) + av$ with $a(u_0 + u_1) \in B$ and   $av \in A_{1/2} $, which implies that $a = a(u_0 + u_1)$ and $av = 0$ for all $a \in B$. Of course we also have 
$a = (u_0 + u_1)$a and $va = 0$. Therefore $u  = u_0 + u_1$ is the unit of $B$ and, since $B = A_0 \oplus A_1$ is a $\Z_2$-grading of $B$ one concludes that $u = u_0$.  In particular, $uv = vu = 0$ and $1_A = u + v$.

Consider now an element $uav$, $a \in A$. Let us show that $uav = 0$.  Applying $p_g$ twice to 
the element $uav $ we obtain
\[
p_g \cdot (p_g \cdot (uav )) = (1/4) uav.
\]
On the other hand, 
\[
p_e \cdot 1_A = p_g \cdot 1_A = u + (1/2)v,
\]
and we also have
\begin{equation*}
 p_g \cdot (p_g \cdot uav )  = (p_e \cdot 1_A)(p_g \cdot uav ) 
=  (u + (1/2)v)( (1/2) uav) 
=  (1/2) uav 
\end{equation*}
which implies that $uav = 0$ for all $a \in A$. In a similar way we can use the equality 
\[
p_g \cdot (p_g \cdot (vau)) = (p_g \cdot vau)(p_e \cdot 1_A)
\]
to prove that $vAu$ is also the zero subspace. 

From these results we conclude that $u$ and $v$ are two orthogonal central idempotents which add up to $1_A$, and since $uAu = A_0 \oplus A_1$  and $vAv = A_{1/2}$ we deduce that $A$ is isomorphic to $(A_0 \oplus A_1) \times A_{1/2}$ as an algebra.
\end{proof}

This result also suggests a simple construction of examples of partial $\Z_2$-gradings: given a $\Z_2$-graded algebra $B$ and another $k$-algebra $C$, the algebra $B \times C$ has a partial grading given by 
\begin{align*}
p_g \cdot (b,c) = (p_g \rhd b, c/2).
\end{align*}
The previous theorem says that every partially $\Z_2$-graded algebra can be  obtained in this manner.

\section*{Acknowledgments}

One of the authors, E.\ Batista, is supported by CNPq (Ci\^{e}ncia sem Fronteiras), proc n\b{o} 23644/2012-8, and he
would like to thanks the D\'epartement des Math\'ematiques de l'Universit\'e Libre de Bruxelles for their kind
hospitality. M.M.S. Alves is partially supported by CNPq, project n. 304705/2010-1. M.M.S.\ Alves and E.\ Batista are
also partially supported by CNPq, project n. 477880/2012-6 and Funda\c{c}\~ao Arauc\'aria, project n. 490/16032.
The last author would like to thank the FNRS for the CDR ``Symmetries of non-compact non-commutative spaces: Coactions
of generalized Hopf algebras.'' that partially supported this collaboration.

\end{document}